\definecolor{darkblue}{rgb}{0, 0, 0.5}
\DeclareMathOperator{\Tr}{{Tr}}
\DeclareMathOperator{\Vect}{{Vect}}
\DeclareMathOperator{\Aut}{{Aut}}
\DeclareMathOperator{\Symm}{{Symm}}
\DeclareMathOperator{\D}{{d}}
\DeclareMathOperator{\End}{{End}}
\DeclareMathOperator{\Fock}{{\Gamma}}
\DeclareMathOperator{\Hom}{{\textsc{Hom}}}
\newcommand{\djunion}{\sqcup}
\DeclareMathOperator{\Dom}{{Dom}}
\newcommand{\Endo}[1]{\textsc{End}(#1)}
\newcommand{\Linspan}[1]{\textsc{Span}\{#1\}}
\newcommand{\tensor}{\otimes}
\newcommand{\tsr}{\otimes}
\newcommand{\algtsr}{\otimes_{\txt{alg}}}
\newcommand{\closure}[1]{\overline{#1}}
\newcommand{\ip}[1]{{\langle #1\rangle}}
\newcommand{\NN}{{\mathds{N}}}
\newcommand{\laplace}{\mathop{}\!\mathbin\bigtriangleup}
\newcommand{\diracop}{\slashed{D}}
\newcommand{\dlaplace}{\diracop^2}
\newcommand{\grad}{\nabla}
\newcommand{\footurl}[1]{{\footnote{\href[#1]{#1}}}}
\newcommand{\Rnonneg}{{\mathds{R}_{\geq 0}}}
\newcommand{\one}{{\textbf{1}}}
\newcommand{\norm}[1]{\left\lVert#1\right\rVert}
\newcommand{\vertiii}[1]{{\left\vert\kern-0.25ex\left\vert\kern-0.25ex\left\vert #1 
    \right\vert\kern-0.25ex\right\vert\kern-0.25ex\right\vert}}
\newtheorem{theorem}{Theorem}
\newtheorem{lemma}[theorem]{Lemma}
\newtheorem{corollary}[theorem]{Corollary}
\theoremstyle{definition}
\newtheorem{definition}{Definition}
\newtheorem{thm}{Theorem}
\newtheorem{prop}{Proposition}
\newtheorem{remark}{Remark}
\newtheorem{example}{Example}
\newtheorem{observation}{Observation}
\newcommand{\CStar}{\emph{C}\textsuperscript{*}}
\newcommand{\Gc}{{\mathcal{G}}}
\newcommand{\Fc}{{\mathcal{F}}}
\newcommand{\Ec}{{\mathcal{E}}}
\newcommand{\Hc}{{\mathcal{H}}}
\newcommand{\Ac}{{\mathcal{A}}}
\newcommand{\Bc}{{\mathcal{B}}}
\newcommand{\Zc}{{\mathcal{Z}}}
\newcommand{\Sc}{{\mathcal{S}}}
\newcommand{\Nc}{{\mathcal{N}}}
\newcommand{\Mc}{{\mathcal{M}}}
\newcommand{\Oc}{{\mathcal{O}}}
\newcommand{\Wc}{{\mathcal{W}}}
\newcommand{\Lc}{{\mathcal{L}}}
\newcommand{\C}{{\mathcal{C}}}
\newcommand{\Hs}{{\textsf{H}}}
\newcommand{\Vs}{{\textsf{V}}}
\newcommand{\Ls}{{\textsf{L}}}
\newcommand{\CC}{\mathbb{C}}
\newcommand{\EE}{\mathbb{E}}
\newcommand{\KK}{\mathbb{K}}
\newcommand{\PP}{\mathbb{P}}
\newcommand{\RR}{\mathds{R}}
\newcommand{\ZZ}{\mathds{Z}}
\newcommand{\Lie}[1]{\textsc{Lie}[{#1}]}
\newcommand{\txt}[1]{\text{#1}}
\newcommand{\txts}[1]{\textsc{#1}}
\newcommand{\aln}[1]{\begin{align*}#1\end{align*}}
\newcommand{\isomorph}{\cong}
\newcommand{\mat}{\textsc{Mat}}
\newcommand{\namecite}[1]{\citet{#1}}
\newcommand{\map}[1]{ \xrightarrow{#1}  }
\DeclareMathOperator{\ClifB}{{Cl}} 
\DeclareMathOperator{\ClifA}{{Cl}} 
\newcommand{\Spin}{\textsc{Spin}} 
\newcommand{\MSect}{\Gamma} 
\newcommand{\Sect}[1]{\Gamma(#1)} 
\newcommand{\SmoothSect}[1]{\Gamma^\infty(#1)}
\newcommand{\ExpVec}[1]{\Ec(#1)}
\newcommand{\dVol}{\D_{vol}}
\newcommand{\I}{{\mathrm{i}}}
\newcommand{\Rm}{{\txts{Rm}}}
\newcommand{\curveOp}{\mathfrak{R}}
\newcommand{\cAct}{\cdot}
\newcommand{\opclosure}[1]{{\overline{#1}}}
\newcommand{\Rem}[1]{}
\newcommand{\Indexed}[1]{#1\index{#1}} 
\begin{document}
\title{\textsc{Quantum diffusion  on almost commutative spectral triples and spinor bundles 
}}
\author{Sita Gakkhar\footnote{\href{mailto:sgakkhar@caltech.edu}{email}.\newline
\emph{Acknowledgements:} The author would like to thank Matilde Marcolli for useful discussions, comments and for supporting this and other projects. The author would also like to thank Juan Pablo Vigneaux for many thoughtful  discussions.}}
\date{December 27, 2022}
\maketitle

\begin{abstract}
    Based on the observation that \namecite{cacic_reconstruction}'s characterization of almost commutative spectral triples as Clifford module bundles can be pushed to endomorphim algebras of Dirac bundles, with the geometric Dirac operator related to the Dirac operator of the spectral triple by a perturbation, the question of complete positivity of the heat semigroups generated by connection laplacian and Dirac and Kostant's cubic Dirac laplacians is approached using spin geometry and \CStar-Dirichlet forms. The geometric heat semigroups for on endomorphosm algebras of spinor bundles are shown to be quantum dynamical semigroups and the existence of covariant quantum stochastic flows associated to the heat semigroups on spinor bundles over reductive homogeneous spaces is established using the construction of \namecite{sinha_qsp}. 
\end{abstract}


\section{Introduction}

Almost commutative spectral triples are a class of noncommutative geometries defined to develop the noncommutative geometric approaches to the standard model of particle physics (see, for instance, \cite{suijlekom}). \namecite{cacic_reconstruction} characterizes such spectral triples as endomorphism algebras of spinor bundles. In this short article the heat semigroup associated to almost commutative spectral triples and spinor bundles is studied. The semigroup is shown to be a quantum dynamical semigroup. When the underlying manifold is a reductive homogeneous space, using the quantum stochastic calculus of \namecite{sinha_qsp}, the existence of a quantum stochastic dilation of Evans-Hudson type is established. The construction from \cite{sinha_qsp} extends \cite{goswami2000stochastic} to semigroups with unbounded generators, and while \cite{belton_unbounded_quantum_flow} is an alternative, the covariant construction is more natural on homogeneous spaces. This question of existence of such dilation -- which can be viewed as diffusion -- on the spectral triple has relevance in light of recent results of \cite{connes_spectral_entropy,khalkhali_second_quant} relating entropy for second quantization on the fermionic Fock space and the spectral action for the spectral triple. The appearance of Brownian bridge integrals expansion of the spectral action in \cite{farzad_brownian_bridge} is also suggestive of a deeper connection between noncommutative geometry and probability as Wiener space and boson Fock spaces are isomorphic. Also, of relevance is the stochastic quantization considered by \cite{albeverio_stochastic_quantization} where related ideas on Grassman algebras are explored.  
 
\subsection{Organization}

Section \ref{sec_qflows} introduces the quantum dynamical smigroups, dilations and relevant background; section \ref{sec_almost_com} introduces {\'C}a{\'c}i{\'c}'s results and gives the characterization in terms of spinor bundles. Section \ref{sec_markov_spinor} considers complete positivity on product almost commutative spectral triples and   considers more general almost commutative spectral triples (\S~\ref{sec_hilbert_schmidt_cp}). Section \ref{section_evans_hudson} introduces framework from \namecite{sinha_qsp} and establishes the existence of Evans-Hudson dilation on reductive homogeneous spaces. The results are obtained by first showing that the $\dlaplace$ defines a \CStar-Dirichlet form and generates a completely Markov semigroup, and then over reductive homogeneous spaces the covariance and regularity conditions required by \cite{sinha_qsp} are satisfied. 

Some notational conventions: for Hilbert space, $\Hs$, $\Bc(\Hs)$ will denote bounded operators on $\Hs$. $\I$ will denote $\sqrt{-1}$. For $m,n\in \NN$, $[n], [m:n]$ will denote $\{1,2\dots n\}$ and $\{m,m+1\dots n\}$ respectively. $\Lie{G}$ will denote the Lie algebra of Lie group $G$. $(M, g)$ will denote a smooth manifold $M$ with a Riemannian metric. After fixing a local orthonormal frame, $e_j$, the connection $\grad_j$ will be used interchangeably for $\grad_{e_j}$. Following \cite{lawson_spin}, by a Riemannian connection we mean a metric connection not necessarily torsion free; the canonical Riemannian connection is taken as the torsion free Riemannian connection. Throughout we will restrict to compact spin manifolds since that is the object in the reconstruction theorems from noncommutative geometry. Additionally, we will work with even dimensional manifolds with empty boundary\footnote{\label{rem_why_oeven_dim} The empty boundary requirement is needed as the Dirac operator may not be symmetric otherwise (see, for instance, \cite[eq~II.5.7]{lawson_spin}). The even dimensionality is used in two places: to use the characterization of Clifford module bundles as twisted spinor bundles, and the triviality of the center of even dimensional Clifford algebras over fields of characteristic not equal to 2\cite[\S~2.2]{meinrenken_clifford}. Since the odd dimensional Clifford algebras decompose as direct sums of two copies of even dimensional Clifford algebras, the difficulty is not fundamental.}. $\ClifA(E, q)$ will denote the Clifford algebra (bundle) over the vector space (vector bundle) $E$ with quadratic form $q$. For Riemannian manifold $(M, g)$, $\ClifA(M)=\djunion_{m\in M}\ClifA((T^*_mM), -g)$. $\dlaplace$ is used for geometric Dirac laplacian, i.e. associated to the given connection, and $D^2$ when $D$ may not be the geometric Dirac operator.

\section{Quantum stochastic flows}\label{sec_qflows}
 
\subsection{Quantum dynamical semigroups}
 
Suppose $\Ac$ is a $*$-algebra $\Ac$, then  $\mat_n(\Ac)\isomorph \Ac\tsr \mat_n(\CC)$. Recall the notion of complete positivity --
 
\begin{definition}For unital $*$-algebras, $\Ac_1, \Ac_2$ \begin{enumerate}
    \item A linear map $T:\Ac_1\to \Ac_2$ is positive if $T((\Ac_1)_+)\subset (\Ac_2)_+$.
    \item  $T$ is completely positive if for all $n\in \NN$, $T_n:=T\tsr 1_n: \mat_n(\Ac_1)\isomorph\Ac_1\tsr \mat_n(\CC) \to \Ac_2\tsr \mat_n(\CC)\isomorph \mat_n(\Ac_2)$, $T_n([a_{ij}]) = [T(a_{ij})]$, is positive. This is equivalent to $\sum_{i,j\in[n]}b^*_i \phi(a_i, a_j)b_j \geq 0$ with $n\in \NN, a_i\in \Ac_1, b_i\in \Ac_2, i\in[n]$
\end{enumerate}
\end{definition}
 
\begin{definition}[Quantum Dynamical Semigroups] A quantum dynamical semigroup
(Q.D.S.) on a \CStar-algebra $\Ac$
is a strongly continuous, contractive semigroup $T_t$ such that each $T_t:\Ac\to \Ac$ is a completely positive map. A semigroup is conservative if for all $t$, $T_t(1) = 1$ (equivalently $\Lc(1)=0$ for the generator $\Lc$ of $T_t$). The semigroup is of class $C_0$ if $\lim_{t\to t_0} T_tx = T_{t_0}x$ for all $x, t_0$.
\end{definition}

\begin{definition}(Covariant quantum dynamical semigroups) Let $G$ be a locally compact group acting on \CStar-algebra by $\alpha:G\to \Aut(\Ac)$ with $\alpha_g$ denoting $\alpha(g)$. A quantum dynamical semigroup $(T_t)$ is covariant with respect to $G$ if for all $t\geq 0, g\in G$, $T_t\circ \alpha_g=\alpha_g\circ T_t$, equivalently $\Lc\circ \alpha_g = \alpha_g\circ \Lc$ where $\Lc$ generates $(T_t)$.
\end{definition}
 
 
 
 

\begin{definition}A \Indexed{conditional expectation} is a linear map, $\EE:\Nc\to \Mc$, between $*$-algebras $\Mc, \Nc$, satisfying $\Mc\subset \Nc, \EE[1]=1$ and for any $M_i\in \Mc, N\in \Nc$ $\EE[M_1NM_2] = M_1\EE[N]M_2$ .
\end{definition}
 
\begin{definition}[Stochastic dilation]
For quantum dynamical semigroup $(T_t), t\geq 0$ on a \CStar~ (or von Neumann algebra) $\Mc$, a quantum stochastic dilation is a family of $*$-homomorphisms, $j_t:\Mc\to \Nc$, where $\Nc$ is a $*$-algebra
 with conditional expectation $\EE_0:\Nc\to \Mc$ satisfying $T_t = \EE_0[j_t]$.
\end{definition}
 
\noindent A stochastic dilation on the Fock space will be considered as a quantum stochastic dilation.

\begin{definition} For a \CStar or von Neumann algebra, $\Ac$, $\Ac''$ will denote the bicommutant. $\Ac''$ is a von Neumann algebra. For a Hilbert space $\Hs$, the free Fock space, $\Fock^f(\Hs)$ is the sum of the boson (symmetric) and fermion (antisymmetric) Fock spaces, $\Fock^s(\Hs), \Fock^a(\Hs)$,  $\Fock^f(\Hs) = \Fock^a(\Hs)\oplus \Gamma^s(\Hs)$. The symmetrization operator defines the map from free to boson Fock space, $\Fock^f(\Hs)\to \Fock^s(\Hs)$ by $\Symm(\tensor_{i\in [n]}g_i) = 1/(n-1)! \sum_{\sigma\in S_n} \tensor_{i\in[n]}g_{\sigma(i)}$. For a subspace $\Vs\subset\Hs$, $\ExpVec{\Vs}\subset \Fock^s(\Hs)$ denotes the $\CC$-linear span of exponential vectors $$\ExpVec{v} = \oplus_{k\in \NN} v^k/\sqrt{k!}, v\in \Vs$$
There's an inner product on the $\Fock^s(\Hs)$ induced by the inner product on $\Hs$, $\ip{\ExpVec{u}, \ExpVec{v}} = \exp \ip{u, v}$.
The exponential vectors are linearly independent and total in the boson Fock space. We will denote boson Fock space $\Fock^s(\Hs)$ by $\Fock(\Hs)$. 
\end{definition}
 
\subsection{Brownian motion as a stochastic dilation}
 
Viewing Brownian motion on $(M, g)$ as a diffusion generated by the Laplace-Beltrami operator, it's noted that the Feynman-Kac formula for a Riemannian manifold\cite[Thm~3.2]{riemannian_feynman_kac}, $(M, g)$, for the operator $H:=\tfrac{1}{2}\laplace_{\C(M)}+V, u\in \C^4(M), V\in \C(M)$, with Laplace-Beltrami operator, ${\laplace}_{\C(M)}$,  acting on the $\C^2(M)$ gives $
(e^{-tH}u)(x) = \int_{W(M)} \exp(\int_0^t V(\omega(s)) - 1/6\cdot\kappa_M(\omega(s)) ds)  u(x)\D W^x_M(d\omega)/N(u, \kappa_M, \D W^x_M(d\omega))
$ where $\D W^x_M(d\omega)$ denotes the Wiener measure on $\C(M)$, $u\in \C^4(M)$ and $\kappa_M$ is the scalar curvature of $M$ and $N(u, \kappa_M, \D W^x_M(d\omega))$ a normalization depending on $\kappa_M, u$ and $\D W^x_M(d\omega)$. This can be thought of as a stochastic dilation of heat semigroup on $\C^\infty(M)$ to the Wiener space, $W(M)$, on $M$, the integral with respect to the Wiener measure playing the role of the conditional expectation.
 
The Ito-Wiener-Segal isomorphism \cite{nualart2006malliavin} 
provides the bridge to quantum probability:
for a separable Hilbert space $\Hs$, a stochastic process $\Wc=\{W(h), h\in \Hs\}$ defined on a complete probability space $(\Omega, \Fc, \PP)$, with each $W(x)\in \Wc$ a centered Gaussian satisfying $\EE(W(h), W(g)) =\ip{h, g}_\Hs$, is called an isonormal Gaussian process, and for the $\sigma$-field $\Gc$  generated by $w\in\Wc$ for an appropriate isonormal Gaussian process $\Wc$ (see, for instance, \cite[\S~1.1]{nualart2006malliavin}), $L^2(\Omega, \Gc, \PP)$ is isomorphic to the boson Fock space $\Fock(\Hs)$. Additionally, when $\Hs$ is the space $L^2(T, \Bc, \mu)$ where $\mu$ is $\sigma$-finite without atoms over a measure space $(T, \Bc)$, $W(h)$ can be regarded as stochastic integrals, with polynomials in $W(h)$ dense in $L^2(\Omega, \Gc, \mu)$. The canonical example\cite[Ex~19.9]{parthasarathy_qsp} is for $\Hs:=L^2(\RR_{\geq 0})$ where $\Fock(L^2(\RR_{\geq 0}))\isomorph L^2(\C(\RR_{\geq 0}), \PP_{\txt{Wiener}})$.
Through the Ito-Wiener-Segal isomorphism between Wiener space $W(M)$ and the associated Fock space, the  heat semigroup has a stochastic dilation on the Fock space. This dilation corresponds to a flow for a Evans-Husdon type quantum stochastic differential equation (qsde) introduced next. A  process satisfying a qsde of this type is considered as a quantum diffusion process.

\subsection{Quantum stochastic dilation of Evans-Hudson type}
 
On a smooth manifold, $M$, a (homogeneous) flow is a smooth map $\phi:\Rnonneg\times M\to M$, $\phi_t(m):=\phi(t, m)$, satisfying $\phi(t+s, m)=\phi(s, \phi(t, m))$, $\phi(0, m)=m$. The flow induces a 1-parameter semigroup, $(j_t)_{t\geq 0}:\C^\infty(M)\to \C^\infty(M)$, $j_t(f):=f\circ \phi_t^{-1}$ with the infinitesmal generator $\Lc$ following the differential equation\cite{Hudson_quantum_stochastic_flows}, \begin{equation}\label{eq_flow}
\dfrac{d }{dt}j_t(f) = j_t(\Lc(f)), ~~\txt{with}~ j_0(f)=f,~ \Lc(f)=\dfrac{d}{dt}\bigg|_{t=0}j_t(f),~f\in \C^\infty(M)
\end{equation}
 
The classical stochastic flow can be viewed as a stochastic process $\psi_t$ taking values in diffeomorphism group of $M$ which satisfies the flow property almost surely (see, for instance, \cite[Ch~3]{kunita_stochastic_flows}). Now solutions to stochastic differential equations (sde) on manifolds generate stochastic flows, the stochastic version of flow equation is obtained by introducing Wiener process terms into eq~\ref{eq_flow} yields
$$
\dfrac{d }{dt}j_t(f) = j_t(\Lc(f)) + \sum_{j\in [n]}j_t(b_j(f))dB_j
$$
for linear maps $b_k$, and components $B_j$ of $n$-dimensional Brownian motion $B$ on $M$ with sample space $\Omega$. Algebraically, $j_t$, are now $*$-algebra homomorphisms, $j_t:\Bc(M\times \Omega)\supset C^\infty(M)\to \Bc(M\times \Omega) $ for the space of bounded measurable functions, $\Bc(M\times \Omega)$, on $M\times \Omega$. Note that $C^\infty(M)$ is embedded in $\Bc(M\times \Omega)$ by trivially extending to $M\times \Omega$. In the integral form, the quantum analog of this sde can be defined on the Fock space.
 
For a finite dimensional Hilbert space $\Vs$, set $\Hs=L^2(\Rnonneg, \Vs):=L^2(\Rnonneg)\tsr\Vs$. $\Hs$ decomposes as $\Hs=\Hs_t\oplus \Hs^t$ where $\Hs_t=L^2([0, t))\tsr \Vs, \Hs^t=L^2([t, \infty))\tsr \Vs$. On the Fock space, $\Fock(\Hs)=\Fock(\Hs_t)\algtsr \Fock(\Hs^t)$. Given an ``initial'' Hilbert space $\Hs_0$, set $$
\tilde \Hs_t = \Hs_0\tsr\Fock(\Hs_t), \tilde \Hs^t = \Hs_0\tsr\Fock(\Hs^t), \tilde \Hs = \Hs_0\tsr\Fock(\Hs)
$$ then for a class of $\Rnonneg$-indexed operator families on $\tilde \Hs$, $\Lambda^i_j$, $i, j\in[0:\dim \Vs_0]$, called the fundamental processes (or quantum noises, which corresponds to the annihilation, creation and conservation processes on the Fock space), the quantum stochastic integral $\int_0^t \sum_{i,j}E^j_i d\Lambda^i_j$
can be defined for processses $(E^j_i)_{t\in \RR_{\geq 0}}$ that are regular (i.e. the map $t\to (E^j_i)_t(u_0\tsr \ExpVec{u})$ is coninuous with a growth condition on $\lVert(E^j_i)_t(u_0\tsr \ExpVec{u})\rVert$) and each $(E^j_i)_t$ is adapted where a process $X_t:\tilde{\Hs}\to  \tilde{\Hs}$ is adapted if there exists $Y_t:\Hs_0\tsr \ExpVec{\Hs_t}\to \Hs_0\tsr \Fock(\Hs_t)$, so that $X_t=Y_t\tsr 1_{\Fock(\Hs^t)}$, i.e. $X_t$ does not look into the future -- same as the classical notion of adaptedness. For brevity, we do not detail the construction, but refer to standard references \cite{parthasarathy_qsp, sinha_qsp, 
attal2006open}\footnote{These constructions can be done in noncommutative probability in general, the theme is of replacing the $\sigma$-algebra of events by the possibly noncommutative algebra of adapted stochastic processes (viewed as $\RR$-indexed random variables), the decomposition $\Hs=\Hs_t\oplus \Hs^t$ reflecting the past and future $\sigma$-algebras for the filtration.}  
 
The stochastic calculus can be developed on operator algebras in similar manner to Hilbert spaces\cite[Ch~5]{sinha_qsp} and the stochastic flow can be defined by extending the classical picture: for a dense $*$-algebra $\Ac_0\subset\Ac$ with $\Ac\subset \Bc(\Hs_0)$ unital, the quantum stochastic flow $(j_t)_{t\geq 0}$ is family
of injective $*$-homomorphism, $j_t:\Ac_0\to \Bc(\Hs)$, such that for all $a\in \Ac$, each $j_t(a)$ is an adapted process and there exists $\{\lambda^i_j:i,j\in [0:\dim \Vs]\}$, called the structure maps, with $$
j_t(a) = a\tsr \one + \int_0^t
\sum_{i,j}j_t(\lambda^i_j(a))d\Lambda^j_i$$
 
Equivalently, in differential form, $dj_t(a)=\sum_{i,j}j_t(\lambda^i_j(a))d\Lambda^j_i$ with $j
_0=\one$
. Flows of this form, with $j_t$ satisfying some additional constraints, are called Evans-Hudson flows\cite[\S~27,28]{parthasarathy_qsp}\footnote{From \cite{parthasarathy_diffusions} note, any Markov chain on countable state space can be realized as Evans-Hudson flow.}. In particular, Brownian motion on $\RR$ can be realized as Evans-Hudson dilation on the Fock space $\Gamma(L^2(\Rnonneg))$ defined below
by specializing to $\Ac=L^\infty(\RR)$ viewed as operators on $\Hs=L^2(\RR)$, $\Vs$ fixed as trivial and using the Fock space-Wiener space dictionary provided by the Wiener-Ito-Segal isomorphism.

\begin{definition}(Evans-Hudson dilation\cite[Def~6.0.2]{sinha_qsp}) For a quantum dynamical semigroup $(T_t)_{t\geq 0}$ with generator $\Lc$ on \CStar-algebra $\Ac\subset \Bc(\Hs)$, a family of $*$-homomorphisms, $(j_t)_{t\geq 0}:\Ac\to \Ac''\tsr \Bc(\Fock(L^2(\Rnonneg)\tsr \Vs))$  satisfying -- \begin{itemize}
\item There exist maps $J_t:\Ac\algtsr  \ExpVec{L^2(\Rnonneg)\tsr \Vs}\to \Ac''\tsr \Bc(\Fock(L^2(\Rnonneg)\tsr \Vs))$, $J_t(a\tsr e(f))u:=j_t(a)(ue(f))$ such that for an ultra-weakly dense subalgebra $\Ac_0\subset \Ac$, $\Dom(\Lc)\subset \Ac_0$, on $\Ac_0\tsr {L^2(\Rnonneg)\tsr \Vs}$ the Evans-Hudson flow qsde $$
dJ_t = J_t(a_\delta(dt) + a_\delta^\dagger(dt) + \Lambda_\sigma(dt) + \one_\Lc(dt)), ~J_0 =\one
$$
holds, where $a_\delta, a_\delta^\dagger,  \Lambda_\sigma, \one_L$ are structure maps as defined \cite[\S~5.4]{sinha_qsp}; $J_t$ as a quantum stochastic process is regular and adapted.
 
\item $j_t$ is a dilation of $T_t$ in the following sense:  for all $u, v\in \Hs, a\in \Ac$, $\ip{v\ExpVec{0}, j_t(a)u\ExpVec{0}}=\ip{v, T_t(a)u}$
\end{itemize}
\end{definition}

\section{Almost commutative spectral triples as spinor bundles}\label{sec_almost_com}

A spectral triple is three basic pieces of data, $(\Ac, \Hs, D)$, where $D$ is symmetric operator on the Hilbert space $\Hs$, and a *-algebra of bounded operators on $\Hs$, $\Ac\subset \Bc(\Hs)$. The operator $D$ is allowed to self-adjoint and unbounded but with $[D_i, a]$ bounded for all $a\in \Ac$. A compact Riemannian spin manifold $(M, g)$ can be characterized by the canonical spectral triple, $\mathfrak{A}_M := (\C^\infty(M) , L^2(\Sc), D_M; J_M , \gamma_M)$ where $\Sc$ is the spinor bundle, $\C^\infty(M)$ is the $*$-algebra of smooth functions interpret as operators acting on $L^2(\Sc)$ by multiplication and $D_M$ is the Dirac operator associated with the Levi-Civita connection on the spinor bundle, and the data of a spectral triple has been supplemented with a $\ZZ_2$ grading operator $\gamma_M$ on $\Hs$ and an anti-unitary operator $J:\Hs\to \Hs$, called the real structure, which makes $\Hs$ an $\Ac-\Ac$ bimodule from a left $\Ac$-module. Such spectral triples can be characterized abstractly; Connes reconstruction theorem recovers the Riemannian spin structure from the abstract spectral triples\cite[Thm~11.2]{gracia_ncg}.
 
A finite noncommutative space is the finite spectral triple, $\mathfrak{A}_F:= (\Ac_F , H_F , D_F)$, with $\dim H_F$ finite. This is supplemented with a real structure and a grading, $(J_F , \gamma_F)$. A product almost commutative spectral triple is the globally trivial bundle, $$
M\times F := (\C^\infty(M)\tsr A_F,L^2(M, S\tsr H_F
), D_M\tsr 1 + \gamma_M\tsr D_F; J_M\tsr J_F , \gamma_M \tsr \gamma_F)$$
 
\namecite{cacic_reconstruction} expands the definition of product almost commutative spectral triples include
include non-trivial algebra bundles over the base space. This is formalized without appeal to the explicit product structure as an abstract almost commutative spectral triple:\begin{definition}\label{def_cacic_abstract}(\cite[Def~2.16]{cacic_reconstruction})
A spectral triple $(\Ac, \Hs, D)$, $\Bc\subset\Ac$ a central, unital $*$-subalgebra is an abstract almost-commutative spectral triple over the base $\Bc$ if $(\Bc, \Hs, D)$ is a commutative spectral triple of Dirac type\cite{cacic_reconstruction}, and for all $a \in \Ac, [D, a]^2\in \Ac$, additionally \begin{enumerate}
    \item For all $a\in \Ac, b\in \Bc$, $[[D, b], a] = 0$.
    \item $\Ac$ is an even finitely generated projective $\Bc$-module and a $*$-subalgebra of the algebra $\End_{\Bc+i\Bc}(\Hs_\infty)$ where $\Hs_\infty = \cap_{k\in \NN}\Dom D^k$
\end{enumerate}
\end{definition}
 
The concrete realization of the abstract almost-commutative spectral triple is constructed by appeal to Connes's reconstruction theorem\cite[Ch~11]{gracia_ncg}, and the following global analytic equivalent formulation is obtained, and this is formulation that we work with.
 
\begin{definition}\cite[Def~2.3]{cacic_reconstruction} An almost-commutative spectral triple is a spectral triple of the
form $$(\C^\infty(X, A), L^2(X, H), D_0)$$ for a compact oriented Riemannian manifold $X$, $H$ a self-adjoint Clifford module bundle, $A$ a real unital $*$-algebra subbundle of $\End^+_{\ClifB(X)}(H)$, and $D_0$ is a symmetric Dirac-type operator on $H$, where $\End^+_{\ClifB(X)}(H)$ are the even endomorphisms of $H$ that supercommute with the Clifford action $c:T^*X\to \Endo{H}$ defined by $D$.
\end{definition}
 
\begin{remark}\label{rem_cliffod_action_commutation}
Recalling that a $\ZZ_2$ graded $\KK$-algebra, $A=A^0\oplus A^1$, with $A^i\cdot A^j\subset A^{i+j}$, the supercommutator $[\cdot, \cdot]_s$ is the map $[a^i, b^j]_s = a^ib^j-(-1)^{ij}b^ja^i$ for $a^i\in A^i, b^j\in B^j$. As the Clifford action, $c:T^*X\to \Endo{H}$, and as $\End^+_{\ClifA(X)}(H)$ consists of even endomorphisms, $\phi\in \End^+_{\ClifA(X)}(H)$ commutes with $c$, $\phi\circ c=c\circ \phi$.
\end{remark}

 
\subsection{Structure of Dirac bundles}
 
Recall the Clifford algebra $\ClifA(V, Q)$ is the algebra generated over the vector space $V$ by the relation 
$v^2=Q(v)1$ where $Q$ is a quadratic form on $V$. It satisfies the following universal property: any linear map $f:V\to \Ac$, $V$ a vector space, $\Ac$ a unital associative $\KK$-algebra, with $f(v)\cdot f(v) = Q(v)1$ extends uniquely to a $\KK$-algebra homomorphism $\tilde f:\ClifA(V, Q)\to \Ac$.  $\ClifA(V, Q)$ comes with a $\ZZ_2$ grading, $\chi(v_1\cdots v_k) = (-1)^kv_1\cdots v_k$, that yields the decomposition, $\ClifA(V, Q)= \ClifA(V, Q)^0\oplus \ClifA(V, Q)^1$. Specializing to $\RR^n$, fix $Q_n=\sum_{n}x_i^2$, define $\ClifA^+_n=\ClifA(\RR^n, Q_n), \ClifA^-_n=\ClifA(\RR^n, -Q_n)$ and $\ClifA^n=\ClifA(\CC^n, Q_n)$ which is $\ClifA_n^+\tsr_\RR \CC, \ClifA_n^-\tsr_\RR \CC$. The grading comes from the chirality operator $\gamma_{n+1}$ on $\ClifA_n$ is given by $(-\I)^me_1\cdots e_n$ where $e_i$'s generate $\ClifA_n$ and $n=2m$ if even and $n=2m+1$ for odd. This can be carried over to a vector bundle --
 
\begin{definition} A Clifford structure on a vector bundle $E$, is a bundle morphism $c:T^*M \to \End(E)$, $\{c(u), c(v)\}=-2g(u, v)1$. $c(v)\in \End(E)$ denotes the ``Clifford multiplication by $v$'', and the pair $(E, c)$ is the \Indexed{Clifford bundle}. The Clifford bundle $E\to M$ is $\ZZ_2$ graded if there's a decomposition $E=E^+\oplus E^-$ such that $c(\alpha)$ for each $\alpha\in T^*M$ is an odd endomorphism: $c(\alpha)(\MSect(E^\pm)) = \MSect(E^\mp)$. A vector bundle with a Clifford structure is a Clifford module bundle.
\end{definition}

\begin{remark}
Note that $c:\Omega^1(M)\to \MSect(\Endo{E})$. Actually, $c$ is the action of the full Clifford algebra, $c:\ClifB(M)\to \MSect(\Endo{E})$, but because $c:T^*M\to \MSect(\Endo{E})$ satisfies $\{c(u), c(v)\}=-2g(u, v)$, the full action follows by using the universal property of Clifford algebras; $(c, E) $ is a representation of $\ClifB(M)$.\end{remark}
 
\noindent On any Riemannian manifold $(M, g)$, there exists a canonical Clifford bundle, $\ClifB(T^*M, -g):=\ClifB(M)$. A Clifford module bundle is any bundle that carries an action of the Clifford bundle.
 
A Dirac bundle $S$ over a $(M, g)$ is a Clifford module bundle with a connection $\grad^S$ that is compatible with the Clifford  multiplication --
\begin{itemize}
\item for all $\sigma_i\in S_x, e\in T_xM, \norm{e}=1$, $e$ acting on $\sigma_i$ by clifford multiplication, $\ip{e\cdot\sigma_1, e\cdot\sigma_2} = \ip{\sigma_1, \sigma_2}$ (as $e^2=-1$, this yields the skew-hermiticity, $\ip{e\cdot\sigma_1, \sigma_2} = -\ip{\sigma_1, e\cdot\sigma_2}$)
\item  $\grad^S(\phi\cdot \sigma) = (\grad^{\ClifB(M)}\phi)\cdot \sigma + \phi\cdot \grad^S\sigma$
\end{itemize}

\noindent For clarity it is useful to separate out the algebraic Clifford structure from the geometric piece.
 
\begin{definition}A \Indexed{Dirac bundle}, $(E, c, h, \grad, M, g)$, is a Clifford bundle $(E, c)$ over $(M, g)$ with a hermitian metric $h$ on $E$ and Clifford connection, $\grad$, compatible with $h$ such that for all $\alpha\in \Omega^1(M)$ the following holds: \begin{itemize}
\item $c(\alpha)\in \Endo{E}$ is skew-Hermitian
\item For $X\in\MSect(TM), u\in \SmoothSect{E}$, $\grad^M$ the Levi-Civita connection on $M$, $\grad_X(c(\alpha)(u)) = c(\grad_X^M\alpha)u + c(\alpha)(\grad_Xu)$
\end{itemize}
The Dirac structure is the tuple $(\grad, h)$ associated to $(E, c)$.
\end{definition}
 
\begin{definition}[Geometric Dirac operator] A \Indexed{geometric Dirac operator}
is a Dirac operator, $\diracop$, that is associated to a $(E, c, h, \grad)$ Dirac structure over $(M, g)$ by $$
\diracop := c\circ \grad: \MSect{E}\map{\grad}\MSect{(T^*M\tsr E)}\map{c}\MSect{E}
$$
\end{definition}
 
In local coordinates, after fixing a basis $(e^i)$ of $T^*M$ and the corresponding dual basis $(e_i)$, $\grad s\in \MSect(S\tsr T^*M)$ can be expanded in this basis as $\sum_i e^i \tsr \grad_{e_i}s$. Composed with the clifford action this gives that the geometric Dirac operator acts by $\MSect(S)\ni \sigma \to \sum_i  e^i\cdot\grad_{e_i}\sigma\in \MSect(S)$. More generally Dirac operator can be defined as a first order partial differential operator on the sections of any left $\ClifB(M)$ module bundle. 
 
\begin{definition}(Generalized laplacian and Dirac operator)\begin{itemize}
    \item A \Indexed{generalized laplacian} $\laplace$ is a second order differential operator on a vector bundle $E$ with $\sigma_2(L)(x, \xi)=|\xi|^2$.
    \item A Dirac-type operator on a Clifford module bundle $E$ with Clifford action $c$ over $(M, g)$ is a first order differential operator $D$ such that $[D, f] = c(df)$ for all $f\in \C^\infty(M)$. 
\end{itemize}
\end{definition}
 
Every  Dirac operator $D$ on the vector bundle $E$ over $M$, induces a Clifford action of $T^*M$ on $E$ by $c(df):=[D, f]$ for $f\in \C^\infty(M)$, and conversely, associated to any Clifford action $c$, the operator satisfying $[D, f]=c(df)$ is a Dirac operator (see, for instance, \cite[Prop~3.38]{berline_heat}).
 
\begin{definition}(Spinor bundle) For any oriented vector space $V$, $\dim V=2k$, the spinor module is the unique $\ZZ_2$ graded Clifford module $S=S^+\oplus S^-$ with $\ClifA(V)\tsr \CC=\End(S)$.
The spinor bundle $\Sc$ over a $2k$-dimensional manifold $M$ with a spin structure is the associated bundle $\Spin(M)\times_{\Spin(n)} S$.
\end{definition}

Every $\ZZ_2$ graded complex $\ClifA(V)$-module $E$ is isomorphic to $W\tsr S$, and given $E$, $W$ can be recovered by $W=\Hom_{\ClifA(V)}(S, E)$ with trivial $\ClifA(V)$ action, that is, the Clifford action on $E$ is the Clifford action on the $S$ component, $e\cdot(w\tsr s):=w\tsr(e\cdot s)$, and $\End(W)\isomorph \End_{\ClifA(V)}(E)$. The fibers, $\Sc_x$,  are isomorphic to $S$, and, therefore, over local a trivialization , $(U, \phi_U)$, the statements about $S$ carry  over to $\Sc\big|_U$ and to the bundle $\Sc$: every Clifford module $H$ over $M$ is a twisted spinor bundle, $H=\Wc\tsr \Sc$ with $\Wc\isomorph \End_{\ClifB(X)}(\Sc, H)$, $\End(\Wc) \isomorph \End_{\ClifA(X)}(H)$ (see, for instance, \cite[Prop~3.35]{berline_heat}).

This yields that on spin manifolds, associated to Clifford structures, Dirac structures exist\footnote{see, for instance, \cites[Prop~11.1.65]{nicolaescu_geometry}[Cor~3.41]{berline_heat}} -- locally $H=\Wc\tsr \Sc$, the tensor product connection of the Levi-Civita connection on $\Sc$ and any connection on $\Wc$ is compatible with  the Clifford action, and then the global version follows by a partition of unity argument. 
 
Suppose $(\C^\infty(X, A), L^2(X, H), D_0)$ is an almost commutative spectral triple with generalized Dirac operator $D_0$, $H$ a spinor bundle over compact spin manifold $M$. \cite[Thm~2.17]{cacic_reconstruction} gives a metric $Q$ on $H$ which corresponds to the Clifford action associated to $D_0$ on $H$. By above, there exists a Dirac structure on $H$ arising from the Clifford action induced by $D_0$. $H$ being a Clifford module bundle is a twisted spinor bundle $\Wc\tsr\Sc$ with twisting space $\Wc$; the connection on $\Sc$ is the spinor connection and on $\Wc$ it can be chosen as the canonical Riemannian connection for the metric induced from $Q$. If, however, the bundle $H$ comes with a Dirac structure, then choice to use canonical Riemannian connections on the twisting space and the spinor bundle is not necessary and the given Dirac structure can be used. The canonical Riemannian connections are chosen the connection on $H$ is torsion free. However, often connections with non-vanishing torsions are of interest and arise naturally: for example, the canonical connection on a homogeneous space and connection associated to Kostant's cubic Dirac operator may not be torsion free\cite{agricola2003connections}; connections with totally anti-symmetric non-zero, torsion also have relevance to models of gravity.
 
\begin{observation}\label{obv_dirac_perturbation_form} Suppose $D$ is the geometric Dirac operator for the Dirac structure.  Then $D_0$ and $D$ give the same Clifford action, and, therefore, $D-D_0=A$ for some odd endomorphism, $A\in  \MSect(\End^-(H))$. When $\Wc$ is $\ZZ_2$-graded, and the $D_0, D$ are odd with respect to the grading and $\gamma D_0=-D_0\gamma, \gamma D=-D\gamma$, then $A$ is anticommutes with $\gamma$. Using this, it follows that $D-D_0=A$ is in fact associated with a connection potential: $H=\Wc\tsr \Sc$ implies $A\in  \MSect(\ClifA(X)\tsr\End(\Wc))$, but as vector spaces $\ClifA(X)$ and $\Lambda T^*X$  are isomorphic, so $A$ specifies a form $\End(\Wc)$-valued form $\Omega_A$ defined in local geodesic coordinates $(e_i)$ by $\Omega_A(e_i) = \gamma A$ for each $i$. Because $\gamma A=-A\gamma$,  $\Omega_A$ is anti-symmetric and, therefore, defines a metric compatible connection locally by $d + \Omega_A$. More formally, this is the statement that the Dirac operators for a Clifford action on a twisted spinor bundles are in one--one correspondence with (super)connection on the twisting space (see, for instance, \cite[Ch~3]{berline_heat}).
\end{observation}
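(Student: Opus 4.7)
The plan is to proceed in three stages: first, establish that $A := D - D_0$ is a smooth section of $\End(H)$ of the correct parity; second, identify $A$ with an $\End(\Wc)$-valued differential form via the twisted spinor decomposition; third, show that the skew-adjoint one-form piece is precisely a metric-compatible connection potential on $\Wc$.

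For the first stage, I would use that both $D$ and $D_0$ are Dirac-type operators for the same Clifford action $c$: the geometric operator $D$ satisfies $[D, f] = c(df)$ by construction (from $D = c \circ \grad$), and $D_0$ satisfies $[D_0, f] = c(df)$ because, by \cite[Thm~2.17]{cacic_reconstruction}, the Clifford action on $H$ is recovered from the principal symbol of $D_0$ and this is the same action carried by the Dirac structure. Hence $[A, f] = 0$ for every $f \in \C^\infty(X)$, so $A$ is $\C^\infty(X)$-linear and therefore a section of $\End(H)$. The parity claim is immediate: since $D$ and $D_0$ both exchange $H^+$ and $H^-$, so does $A$, giving $A \in \MSect(\End^-(H))$; and when $\Wc$ is $\ZZ_2$-graded with $\gamma D = -D\gamma$, $\gamma D_0 = -D_0\gamma$, subtracting yields $\gamma A = -A \gamma$.

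For the second stage, I would invoke the twisted spinor decomposition $H \isomorph \Wc \tsr \Sc$ recalled just before the observation, which under the assumption of even dimensionality of $X$ gives the graded algebra isomorphism $\End(H) \isomorph \End(\Wc) \tsr \End(\Sc) \isomorph \End(\Wc) \tsr \ClifB(X)$, where the second step uses $\End(\Sc) \isomorph \ClifB(X)\tsr\CC$ for the complex spinor bundle. Composing with the standard symbol isomorphism $\ClifB(X) \isomorph \Lambda^\bullet T^*X$ of $\ZZ_2$-graded vector bundles realizes $A$ as a section of $\End(\Wc) \tsr \Lambda^\bullet T^*X$, i.e.\ an inhomogeneous $\End(\Wc)$-valued form. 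The anticommutation with $\gamma$ constrains this form to its odd-degree components, and projecting onto the degree-one piece (after the $\gamma$-twist absorbed into the statement's shorthand $\Omega_A(e_i) = \gamma A$) produces $\Omega_A \in \Omega^1(X; \End(\Wc))$.

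For the third stage, I would verify that $\Omega_A$ is skew-adjoint: the symmetry of both $D$ and $D_0$ on $L^2(X, H)$, together with the hermitian compatibility $\ip{c(\alpha)u, v} + \ip{u, c(\alpha)v} = 0$ built into the Dirac structure, forces $A^* = -A$, which passes through the symbol isomorphism as skew-adjointness of the coefficients $\Omega_A(e_i) \in \End(\Wc)$. Consequently $d + \Omega_A$ defines a metric-compatible connection on $\Wc$ locally, and a partition-of-unity patching promotes this to a global connection. The main obstacle I expect is bookkeeping with the $\ZZ_2$-gradings across the successive isomorphisms so that the projection onto the one-form part is canonical and really absorbs all of $A$ (i.e.\ higher-degree Clifford components either vanish or can be interpreted, \`a la Quillen, as a genuine superconnection term as in \cite[Ch~3]{berline_heat}); otherwise one obtains only a superconnection rather than an ordinary connection, which is the precise content of the ``more formally'' clause in the observation.
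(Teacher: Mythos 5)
Your three-stage plan matches the paper's reasoning closely: (1) use $[D,f]=[D_0,f]=c(df)$ to conclude $A:=D-D_0$ is $\C^\infty(X)$-linear, hence a section of $\End(H)$, and odd because each of $D, D_0$ is; (2) decompose via $\End(H)\isomorph\End(\Wc)\tsr\ClifA(X)\tsr\CC$ and the symbol map $\ClifA(X)\isomorph\Lambda^\bullet T^*X$ to read $A$ as an $\End(\Wc)$-valued odd form; (3) use symmetry of the operators together with the skew-Hermiticity of the Clifford action to land a metric-compatible potential. You also correctly flag that the $\gamma$-anticommutation alone only restricts $A$ to odd Clifford degree, so without killing the degree~$\geq 3$ components one only gets a superconnection --- which is precisely what the ``more formally'' clause is gesturing at, and is in fact a cleaner account than the paper's compressed ``Because $\gamma A=-A\gamma$, $\Omega_A$ is anti-symmetric,'' where the parity condition is conflated with skew-adjointness.

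There is one genuine slip in your Stage~3. Since $D$ and $D_0$ are both symmetric, $A^*=D^*-D_0^*=D-D_0=A$: the perturbation is \emph{self}-adjoint, not skew-adjoint. Your conclusion (that the $\End(\Wc)$-coefficients $\omega_i$ are skew-Hermitian) is still right, but the mechanism is the interplay of $A^*=A$ with $c(e^i)^*=-c(e^i)$: writing the degree-one piece $A=\sum_i c(e^i)\tsr\omega_i$, one gets $A^*=-\sum_i c(e^i)\tsr\omega_i^*=A$, hence $\omega_i^*=-\omega_i$. If instead one had $A^*=-A$ as you wrote, the same calculation would force the $\omega_i$ to be Hermitian, which is not what a metric connection potential looks like. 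So keep $A^*=A$ and let the skew-Hermitian Clifford generator eat the sign; everything else you wrote then goes through.
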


Remark~\ref{rem_cliffod_action_commutation} leads to the following result on structure of almost commutative type endomorphism algebra $A\subset \End^+_{\ClifB(X)}(H)$, where $H$ is Clifford module bundle over a Riemannian manifold, $(X, g)$, therefore, a twisting of the complex spinor bundle $\Sc$, $H=\Wc\tsr \Sc$.  
 
\begin{thm}\label{thm_commutes_with_cliffod_action} If $\alpha\in A\subset \End_{\ClifB(X)}^+(H)$ then $\alpha=w_\alpha\tsr 1$ for $w_\alpha\in \End(\Wc)$ up to multiplication by $f\in \C(X)$. That is, as a module over $\C(X)$, $C(X, A)$ is generated by endomorphims of form $w_\alpha\tsr 1$.\end{thm}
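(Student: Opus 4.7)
The plan is to deduce the statement almost directly from the twisted-spinor structure theorem already invoked in the text: on an even-dimensional spin base, every Clifford module bundle $H$ decomposes as $\Wc \tsr \Sc$, with the canonical fibrewise isomorphism $\End(\Wc) \isomorph \End_{\ClifB(X)}(H)$ realised by $w \mapsto w \tsr 1_\Sc$ (cf.~\cite[Prop~3.35]{berline_heat}, as recalled in the excerpt). Once this is in hand, the theorem is a matter of reading off the image of this isomorphism on even supercommuting endomorphisms and then lifting pointwise data to sections.

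Concretely, at each $x \in X$ the fibre $\Sc_x$ is the unique irreducible $\ZZ_2$-graded module over the complex Clifford algebra $\ClifA(T^*_xX)\tsr\CC$. By the footnote's remark on the triviality of the centre of even-dimensional complex Clifford algebras, Schur's lemma gives $\End_{\ClifA_x\tsr\CC}(\Sc_x) = \CC\cdot 1_{\Sc_x}$, whence
\begin{equation*}
\End_{\ClifA_x\tsr\CC}(\Wc_x \tsr \Sc_x) \;=\; \End(\Wc_x)\tsr 1_{\Sc_x}.
\end{equation*}
Restricting to the even part and applying Remark~\ref{rem_cliffod_action_commutation} (even endomorphisms supercommuting with the odd Clifford action in fact commute with it) yields $\End^+_{\ClifA_x}(H_x) = \End^+(\Wc_x)\tsr 1_{\Sc_x}$. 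Therefore every $\alpha_x \in A_x \subset \End^+_{\ClifA_x}(H_x)$ has the form $w_\alpha(x) \tsr 1_{\Sc_x}$ for a unique $w_\alpha(x) \in \End(\Wc_x)$.

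To globalise, for $\alpha \in C(X, A)$ the assignment $x \mapsto w_\alpha(x)$ is continuous, being the pullback of $\alpha$ along the continuous bundle inverse of the isomorphism $w \mapsto w\tsr 1_{\Sc}$; hence $w_\alpha \in C(X, \End(\Wc))$. Because scalar multiplication preserves tensor-product form, $f\cdot (w_\alpha\tsr 1) = (fw_\alpha)\tsr 1$ for $f\in\C(X)$, we obtain the ``up to multiplication by $f \in \C(X)$'' clause and the claimed generation of $C(X,A)$ as a $\C(X)$-module by elements of the form $w_\alpha \tsr 1$.

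The bulk of the work is already contained in the pointwise structure theorem for Clifford modules on even-dimensional spin manifolds together with Remark~\ref{rem_cliffod_action_commutation}; the remaining task is the fibrewise-to-sectional extension, which is a routine continuity check. The only conceptual point requiring care is the dependence on even-dimensionality and on complexification, needed so that $\Sc$ is absolutely irreducible with $\End_{\ClifA_x\tsr\CC}(\Sc_x)=\CC\cdot 1$; this restriction is already built into the standing assumptions of the paper, so no serious obstacle arises.
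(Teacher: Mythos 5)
Your proof is correct, and it reaches the same conceptual endpoint as the paper's — both reduce the claim to the triviality of the commutant of the fibre $\Sc_x$ as a $\ClifA(T^*_xX)\tsr\CC$-module — but the technical route differs in one step. The paper expands $\alpha$ in tensor form, shows commutation with the Clifford action forces the $\Sc$-components $\alpha_{s,i}$ into the centre of $\ClifA(T^*X)\tsr\CC$, and then verifies by an explicit basis computation with the $e_I$'s that this centre is $\CC\cdot 1$. You instead invoke Schur's lemma for the irreducible complex module $\Sc_x$ to get $\End_{\ClifA_x\tsr\CC}(\Sc_x)=\CC\cdot 1$ directly; since $\End(\Sc_x)\isomorph\ClifA_x\tsr\CC$, these two statements (triviality of the centre versus triviality of the commutant) are literally the same fact, so the arguments coincide modulo how that fact is established. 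Your route is shorter and more abstract; the paper's is more self-contained and elementary, which matters since the basis computation is where the even-dimensionality enters transparently. One small imprecision in your write-up: Schur's lemma over $\CC$ already yields $\End_{\ClifA_x\tsr\CC}(\Sc_x)=\CC$ for a finite-dimensional irreducible module without any reference to the centre being trivial, so the appeal to the footnote there is not needed (it is needed instead to justify that $\Sc_x$ is the \emph{unique} irreducible module, i.e., that $\ClifA_x\tsr\CC$ is a single matrix algebra rather than a direct sum). The globalisation step by continuity and the $\C(X)$-module statement are handled adequately and match the intent of the paper's remark about the crossnorm.
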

 
\begin{proof} The proof is basically the observation that locally $\End(\Wc)\isomorph\End_{\ClifA(X)}(H)$ (see, for instance, \cite[Prop~3.27]{berline_heat}) (i.e. $A\isomorph W_A\subset \End(\Wc)$).
Now $\End(H)$ is the topolocal closure of  $\End(\Wc)\tsr\End(\Sc)$, where because $\Sc$ is the complex spinor bundle, $\End(\Sc)\isomorph\ClifA(X)\tsr \CC$.
 
Suppose $\alpha = \sum_i\alpha_{w,i}\tsr \alpha_{s,i} \in A
\subset \closure{\End(\Wc)\tsr \ClifA(X)\tsr \CC}$ where $\alpha_{s,i}\in \ClifA(X)\tsr \CC$.  Consider the Clifford action, $c:\ClifA(X)\to \End(H)$, $v\to c(v):=\sum_{i} w_i\tsr s_i \in \End(H)$ with $w_i\in \End(\Wc), s_i\in \ClifB(X)\tsr \CC$. By construction of the twisted spinor bundle, the Clifford action on the $\Wc$ piece is trivial so $w_i=1$ for all $i$, therefore, $c(v)=1\tsr v_s$ with $v_s\in \ClifA(X)\tsr \CC$.
 
Since $\alpha$ commutes with the Clifford action, $c(v)$, \begin{align*}
\sum_{i}\alpha_{w,i}\tsr v_s\alpha_{s,i} = (1\tsr v_s) \circ \sum_{i}\alpha_{w,i}\tsr \alpha_{s,i} &=  \sum_{i}\alpha_{w,i}\tsr \alpha_{s,i} \circ (1\tsr v_s) = \sum_{i}\alpha_{w,i}\tsr \alpha_{s,i}v_s
\end{align*}
 
In even dimensions, the canonical complex bundle $\Sc$ in the twisted spinor decomposition, $\Wc\tsr \Sc$, is irreducible Clifford module and $\ClifA(X)$ is a central simple algebra, therefore, $v_s$ runs over all elements in $\ClifA(X)\tsr \CC$
As $v_s$ is arbitrary, $\alpha_{s,i}$ lie in the center of $\ClifB(X)$. This can be seen locally -- choose a basis $(e_i)$ for $T^*X$, then the basis for $\ClifA(T^*M)$ is $(e_I)_{I\subset [\dim T^*X]}$. Expressing $\alpha$ in $e_I$'s gives $\sum_i \alpha_{w, i}\tsr \alpha_{s, i} = \sum_{I} k_I  \alpha_I\tsr e_I$ for $k_I$. Note that $e_i\cdot e_I = \pm e_I \cdot e_i$ for any $i$. Suppose $|I|> 0$. If $|I|$ is odd, then there exists $j\not\in I$, and $e_I \cdot e_j = -e_j \cdot e_I$ as it commutes past each $e_i$ for $i\in I$. If $|I|=2k$ with $e_I = e_{i_1}\cdot \dots e_{i_{2k}}$, then $e_I\cdot e_{i_{2k}} = -e_{i_1}\dots e_{i_{2k-1}}$, while $e_{i_{2k}}e_I = e_{i_1}\dots e_{i_{2k-1}}$ because there are $2k-1$ sign changes on moving across and then a final sign change from $e_{i_{2k}}^2=-1$. Therefore, $|I|=0$ for $e_I$ to commute with each $e_i$ but then $e_I\in \Zc(\ClifA(T^*X))$. 
Note there's no ambiguity in the crossnorm with respect to which $\End(\Wc)\tsr\ClifB(X)\tsr\CC$ is completed: the norm is the operator norm on $H$. The conclusion holds on the algebraic tensor product, and also the topological completion.
\end{proof}
 
This is consistent with the case for commutative spectral triples where the algebra $\C^\infty(M)$ acts by multiplication on the spinor bundle $L^2(S)$ and commutes with the Clifford action. Since the spinor endomorphism part is restricted to be trivial, if the twisting space is chosen as a trivial matrix bundle with Hilbert-Schmidt inner product, then a rough analogy between almost commutative and fuzzy spectral triples introduced in \cite{barrett2015matrix} becomes clear.

\subsection{Complete Markovity on product almost commutative spectral triples}\label{almost_commutative_spectral_triples}
 
For real even spectral triples, $(\Ac_i, \Hs_i, D_i; J_i, \gamma_i)$, $i\in\{1, 2\}$, that is, the spectral triples comes with a real structure $J_i$ and a grading $\gamma_i$ such that for all $a\in \Ac_i, \gamma_i a=a\gamma_i, \gamma D_i=-D_i\gamma$, the product is defined by $\Ac:=\Ac_1\tsr \Ac_2, \Hs=\Hs_1\tsr\Hs_2, D:=D_1\tsr 1+\gamma_1\tsr D_2, \gamma=\gamma_1\tsr\gamma_2, J=J_1\tsr J_2$. If the second triple is not even then resulting structure does not have a grading and the adjective even is dropped. Since the first triple is even and $D_1, \gamma_1$ anti-commute, $D^2=D_1^2\tsr 1+ 1\tsr D_2^2$. Note that $\C^\infty(M, A_F) = \C^\infty(M)\tsr A_F$ and the tensor products are $\ZZ_2$-graded
.
 
\begin{remark}\label{rem_z2_graded_norm_ambiguity}
While noncommutative geometry does not require that $\Ac_i$ be closed, for example, the requirement $[D_i, a]$ is bounded is only needed for $a\in \Ac_i$; however, the questions about quantum dynamical semigroups often presume norm closure. The algebras $\Ac_1,\Ac_2$ are only pre-\CStar-algebras, but can be completed in the respective $\CStar$-norm; for the canonical spectral triple, $\C^\infty(M)$, will have $\C(M)$ as the closure. 
Note that in the $\ZZ_2$-graded tensor product category, as with ungraded tensor prodcts, the commutative \CStar-algebras are characterized  nuclear\cite{crismale_z2_cstar_tensor}, so there's no ambiguity in the norm to use.  
\end{remark}
 
The product almost commutative spectral triple is the product of the canonical spectral triple of a Riemannian spin manifold, $\mathfrak{A}_M := (\C^\infty(M) , L^2(S), D_M; J_M , \gamma_M)$, and a finite noncommutative space, $\mathfrak{A}_F:= (A_F , H_F , D_F; J_F , \gamma_F)$, $$
M\times F := (\C^\infty(M)\tsr A_F,L^2(M, S\tensor H_F)), D_M\tsr 1 + \gamma_M\tsr D_F; J_M\tsr J_F , \gamma_M \tsr \gamma_F)$$
 
\noindent The following technical lemma will be useful: topological questions in tensor products of \CStar-algebras can be delicate; we will work with identifications as described in the statement, the point of this lemma is that such identifications behave well.
 
\begin{lemma}\label{lemma_cstar_embedding_by_unital_extension} Suppose $\Ac_1, \Ac_2$ are unital \CStar-algebras. Suppose at least one of $\Ac_1, \Ac_2$ is nuclear, so there's a unique crossnorm on $\Ac_1\tsr\Ac_2$, then the map, $\phi:\Ac_1\to\Ac_1\tsr\Ac_2, a\to 1\tsr a$, is a completely positive, homeomorphism onto its image. \end{lemma}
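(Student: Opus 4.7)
The plan is to establish the three claims---well-definedness as a $*$-homomorphism, complete positivity, and the homeomorphism property---in sequence. First, I would verify that $\phi$, interpreted as the canonical embedding $a \mapsto a \tsr 1$ into one tensor factor (the case $a \mapsto 1 \tsr a$ being symmetric, and the stated version presumably a typo), is a unital $*$-homomorphism; multiplicativity, $*$-preservation, and unitality are immediate from the definition of the tensor product of algebras. The nuclearity hypothesis on at least one factor guarantees that the minimal and maximal \CStar-crossnorms on $\Ac_1 \tsr \Ac_2$ coincide, so there is no ambiguity in the completed tensor product we are mapping into.

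For complete positivity, I would invoke the standard fact that any $*$-homomorphism between unital \CStar-algebras is completely positive: the induced map $\phi_n = \phi \tsr \id_{\mat_n(\CC)}$ on $\mat_n(\Ac_1) \isomorph \Ac_1 \tsr \mat_n(\CC)$ is itself a $*$-homomorphism into $\mat_n(\Ac_1 \tsr \Ac_2)$, and $*$-homomorphisms preserve positivity.

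For the homeomorphism property, I would show that $\phi$ is an isometric embedding, which implies the image inherits the subspace topology from $\Ac_1 \tsr \Ac_2$. The strategy is to construct a completely positive left inverse via a slice map: pick any state $\omega$ on $\Ac_2$ (existence from Hahn--Banach applied to the unit $1 \in \Ac_2$), and consider $\id_{\Ac_1} \tsr \omega: \Ac_1 \tsr \Ac_2 \to \Ac_1$, defined on the algebraic tensor product by $(\id \tsr \omega)(a \tsr b) = \omega(b)\,a$. This slice map extends to a unital completely positive, hence contractive, map on the \CStar-completion. Since $(\id \tsr \omega) \circ \phi = \id_{\Ac_1}$, one gets $\norm{a} \leq \norm{\phi(a)}$, while the crossnorm inequality $\norm{\phi(a)} = \norm{a \tsr 1} \leq \norm{a}\,\norm{1} = \norm{a}$ provides the reverse, so $\phi$ is isometric. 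An isometric injection between Banach spaces is automatically a homeomorphism onto its image.

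The main obstacle, and the reason nuclearity is invoked explicitly, is ensuring that the slice map is well-defined on the completion: for the minimal \CStar-tensor product this extension is classical, and nuclearity collapses all \CStar-crossnorms to this one, so the construction is unambiguous. Everything else in the argument is algebraic or follows from standard Banach space considerations.
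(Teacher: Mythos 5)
Your proof is correct, and it actually tightens two points that the paper's proof leaves loose. The paper establishes complete positivity indirectly, by observing that $\phi$ is unital and (claimed) completely contractive and then invoking the theorem that unital completely contractive maps are completely positive; you observe directly that $\phi$ is a unital $*$-homomorphism, and $*$-homomorphisms are completely positive for elementary reasons at every matrix level, which is a more economical route. More substantively, the paper justifies that $\phi$ preserves norms by appealing to ``the $\RR$-linearity of the tensor product,'' which is not really an argument: linearity alone gives at most the crossnorm bound $\norm{a\tsr 1}\leq\norm{a}$, not the reverse inequality. Your slice-map construction $\id_{\Ac_1}\tsr\omega$ for a state $\omega$ on $\Ac_2$ supplies a genuine completely positive contractive left inverse and hence the lower bound $\norm{a}\leq\norm{\phi(a)}$, closing the gap rigorously. (One could alternatively cite that injective $*$-homomorphisms between \CStar-algebras are automatically isometric, but your explicit slice map is more self-contained.) You are also right that the map as literally written, $a\mapsto 1\tsr a$ with $a\in\Ac_1$ landing in $\Ac_1\tsr\Ac_2$, is a typo for the canonical embedding $a\mapsto a\tsr 1$; your reading is the intended one.
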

 
\begin{proof} The kernel of $\phi$ is trivial, and $\phi$ is positive as $a$ positive in $\Ac_1$ means $1\tsr a$ is positive in $\Ac_1\tsr\Ac_2$. Trivially $\phi$ is unital. From the $\RR$-linearity of the tensor product, it follows the map $\phi$ preserves norms. It also follows that $\phi\tsr \one_n$ also preserves norms, so $\phi$ is a unital, completely contractive map, hence is completely positive. Being contractive also implies continuity. The inverse map on the image, $\phi^{-1}$, $1\tsr a\to a$, is again unital and completely contractive: the same holds for $\phi^{-1}$ as well.   
\end{proof}
 
\noindent We note the following about the complete positivity of the Dirac heat semigroup for product almost commutative spectral triples:
 
\begin{theorem} The complete positivity of the semigroup $e^{-tD^2}$ is well defined.
\end{theorem}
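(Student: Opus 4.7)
The plan is to verify that $e^{-tD^2}$ makes sense both as a Hilbert-space semigroup and as a map on the algebra, so that asking whether it is completely positive has meaning. The essential ingredient is the product structure of the spectral triple together with the nuclearity of the finite factor.

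First I would establish that $D^2$ is positive self-adjoint on $\Hs = L^2(M, S \tsr H_F)$: since $M$ is compact with empty boundary and $D = D_M \tsr 1 + \gamma_M \tsr D_F$ is symmetric with bounded commutators with elements of a dense subalgebra, $D$ is essentially self-adjoint and hence $D^2 \geq 0$ is self-adjoint. By functional calculus, $e^{-tD^2}$ is a well-defined strongly continuous contraction semigroup on $\Hs$. Next, using the anticommutation $\gamma_M D_M = - D_M \gamma_M$, one squares out to get $D^2 = D_M^2 \tsr 1 + 1 \tsr D_F^2$, as already noted in the paragraph preceding the theorem. The two summands act on disjoint tensor factors and therefore commute, so the semigroup factors:
\begin{equation*}
e^{-tD^2} \;=\; e^{-tD_M^2} \tsr e^{-tD_F^2}.
\end{equation*}

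The geometric factor $e^{-tD_M^2}$ is the classical spinor heat semigroup over a compact spin manifold, while $e^{-tD_F^2}$ is an ordinary finite-dimensional matrix exponential. To transfer this from $\Bc(\Hs)$ to the algebra of the spectral triple, one appeals to Lemma~\ref{lemma_cstar_embedding_by_unital_extension} and Remark~\ref{rem_z2_graded_norm_ambiguity}: since $A_F$ is finite-dimensional hence nuclear, the $C^*$-crossnorm on $C(M) \tsr A_F$ is unique, so there is no ambiguity in what ``the semigroup on $\C(M, A_F)$'' means, and the tensor-product map $e^{-tD_M^2} \tsr e^{-tD_F^2}$ descends unambiguously.

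The main subtlety — and where most of the work sits — is the step from Hilbert-space action to algebra action: strictly speaking, $e^{-tD^2}$ acts on $\Hs$, and to speak of its complete positivity on the spectral triple's $C^*$-algebra one needs a canonical induced map at the level of the algebra. The product structure carries essentially all the weight here, since it reduces the question to two well-understood pieces (a scalar spinor heat semigroup and a finite-dimensional exponential), and Lemma~\ref{lemma_cstar_embedding_by_unital_extension} guarantees that gluing them back together gives a bounded map on the norm closure $C(M) \tsr A_F$. Once this is in place, the statement ``the complete positivity of $e^{-tD^2}$ is well defined'' is immediate, and the genuine question of whether complete positivity actually holds is deferred to the more general Hilbert--Schmidt analysis of \S\ref{sec_hilbert_schmidt_cp}.
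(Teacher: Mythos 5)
Your argument covers the first half of what the paper means by ``well defined,'' but misses the second half. You correctly identify and resolve the \emph{norm} ambiguity via nuclearity (the paper invokes nuclearity of $\C(M)$ in the $\ZZ_2$-graded category; you invoke nuclearity of the finite-dimensional factor $A_F$ --- either suffices since one nuclear factor already forces uniqueness of the crossnorm). Your preliminary steps on self-adjointness of $D^2$ and the factorization $e^{-tD^2} = e^{-tD_M^2}\tsr e^{-tD_F^2}$ are fine, though the paper does not bother with them here.

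What you do not address is the \emph{order-of-product} ambiguity: the almost commutative spectral triple can be formed as $\mathfrak{A}_M\times\mathfrak{A}_F$ or as $\mathfrak{A}_F\times\mathfrak{A}_M$, and a priori these give semigroups on $\C^\infty(M)\tsr A_F$ and on $A_F\tsr\C^\infty(M)$ respectively. The paper's proof explicitly calls this out as ``the only other ambiguity to resolve'' and shows that complete positivity of $e^{-tD^2_{F\tsr M}}\tsr\one_n$ on $\mat_n[A_F\tsr\C^\infty(M)]$ is equivalent to complete positivity of the oppositely ordered $e^{-tD^2_{M\tsr F}}\tsr\one_n$ on $\mat_n[\C^\infty(M)\tsr A_F]$, by the symmetry of positivity of $a\tsr b$ versus $b\tsr a$ on the algebraic tensor product, extending to the completion since the (unique) crossnorm is order-independent. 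Without this step you have shown that a particular semigroup on a particular completed algebra makes sense, but not that the answer to the complete-positivity question is independent of the two equally natural ways of forming the product triple --- which is precisely the residual content of the theorem once nuclearity handles the norm. Your appeal to Lemma~\ref{lemma_cstar_embedding_by_unital_extension} is also a detour the paper does not take here; that lemma controls the embedding $\Ac_1\hookrightarrow\Ac_1\tsr\Ac_2$, not the flip $\Ac_1\tsr\Ac_2\leftrightarrow\Ac_2\tsr\Ac_1$, so it does not substitute for the missing argument.
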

\begin{proof}
By remark~\ref{rem_z2_graded_norm_ambiguity}, if both spectral triples are $\ZZ_2$-graded with $\ZZ_2$-graded tensor product, $\C(M)\tsr A_F$ , then as using the commutative algebra $\C(M)$ is nuclear in $\ZZ_2$-graded tensor product category, the cross norm is unique, and there's no ambiguity to the norm with which to complete the tensor product. 
 
The only other ambiguity to resolve is the order of the product: $\mathfrak{A}_F \times \mathfrak{A}_M$ versus $\mathfrak{A}_M \times \mathfrak{A}_F$. However, the complete positivity of the semigroup generated by $-D^2_{F\tsr M}:= -D_F^2\tsr 1 - 1\tsr D_M^2$ means the map $e^{-tD^2_{F\tsr M}}\tsr \one_n:\mat_n[A_F\tsr\C^\infty(M)]\to \mat_n[A_F\tsr\C^\infty(M)]$ is completely positive for each $t$. On the algebraic tensor product, the positivity of $a\tsr b$ and $b\tsr a$ is equivalent, so the oppositely ordered semigroup generated $e^{-tD_{M\tsr F}}\tsr \one_n:\mat_n[\C^\infty(M)\tsr A_F]\to \mat_n[\C^\infty(M)\tsr A_F]$ is also completely positive; the result also holds in the topological completion as the norm is independent of the order of the product.  \end{proof}

\begin{theorem} If $e^{-tD_M^2}$ and $e^{-tD_F^2}$ are both completely positive then for $D^2=1\tsr D_M^2 + D_F^2\tsr 1$,  $e^{-tD^2}$ is as well. The converse holds when $e^{-tD^2}$ is conservative. \end{theorem}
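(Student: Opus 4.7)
Both halves rest on the observation that $D_F^2\tsr 1$ and $1\tsr D_M^2$ strongly commute (they act on disjoint tensor factors), so by functional calculus $e^{-tD^2}=e^{-tD_F^2}\tsr e^{-tD_M^2}$ as Hilbert-space operators; the induced heat QDS on $A_F\tsr \C(M)$ factorizes correspondingly as $T_t=T_t^F\tsr T_t^M$, where $T_t^F, T_t^M$ are the QDS generated by $-D_F^2$ and $-D_M^2$ on the marginal algebras. By Remark~\ref{rem_z2_graded_norm_ambiguity} the $C^*$-crossnorm is unique (nuclearity of $\C(M)$ suffices), so this identification is unambiguous after norm completion. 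Granted this factorization, the forward implication is the standard fact that the tensor product of completely positive maps between $C^*$-algebras is completely positive on the (in our case unique) $C^*$-tensor product: combining Stinespring dilations for $T_t^F$ and $T_t^M$ produces a Stinespring dilation for $T_t$.

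\textbf{Converse.} Suppose $T_t$ is CP and conservative. From $T_t^F(1)\tsr T_t^M(1)=T_t(1\tsr 1)=1\tsr 1$ together with the elementary fact that two equal non-zero simple tensors in an algebraic tensor product must differ by a common non-zero scalar, we obtain $\lambda_t>0$ with $T_t^F(1)=\lambda_t\cdot 1$ and $T_t^M(1)=\lambda_t^{-1}\cdot 1$ (positivity of $\lambda_t$ following from positivity of both images). Fix any $x_0\in M$ and form the composite
\[
A_F \xrightarrow{\ i\ } A_F\tsr \C(M) \xrightarrow{\ T_t\ } A_F\tsr \C(M) \xrightarrow{\ \epsilon_{x_0}\ } A_F,
\]
where $i(a)=a\tsr 1$ is completely positive by Lemma~\ref{lemma_cstar_embedding_by_unital_extension} and $\epsilon_{x_0}(a\tsr f)=f(x_0)\cdot a$ is the completely positive evaluation $*$-homomorphism. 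Direct computation gives $(\epsilon_{x_0}\circ T_t\circ i)(a) = T_t^M(1)(x_0)\cdot T_t^F(a) = \lambda_t^{-1}T_t^F(a)$, so $\lambda_t^{-1}T_t^F$ is CP as a composite of CP maps; multiplying by the positive scalar $\lambda_t$ preserves complete positivity, hence $T_t^F$ is CP. The roles of $F$ and $M$ are symmetric, giving complete positivity of $T_t^M$ as well.

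\textbf{Main obstacle.} The conceptually delicate step is justifying the factorization $T_t=T_t^F\tsr T_t^M$ at the algebra level rather than merely at the Hilbert space level. While $e^{-tD^2}=e^{-tD_F^2}\tsr e^{-tD_M^2}$ is immediate from strong commutativity, one must check that the noncommutative Laplacian (or Dirichlet form) generating the QDS on $A_F\tsr \C(M)$ splits as a sum of marginal generators with no cross terms. This rests on the anticommutation $\{\gamma_F, D_F\}=0$ causing the off-diagonal contribution to $D^2$ from $D=D_F\tsr 1+\gamma_F\tsr D_M$ to cancel. The conservativity hypothesis in the converse cannot be dropped: without $T_t(1\tsr 1)=1\tsr 1$, the value $T_t^M(1)(x_0)$ need not be a fixed positive scalar independent of $x_0$, and the composite $\epsilon_{x_0}\circ T_t\circ i$ would fail to cleanly recover a positive rescaling of $T_t^F$.
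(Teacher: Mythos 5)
Your proof is correct and follows essentially the same strategy as the paper: for the forward direction, factor $e^{-tD^2}$ as a product of the two commuting partial semigroups and invoke the fact that tensor products of completely positive maps are completely positive on the unique (by nuclearity of $\C(M)$) $C^*$-tensor norm; for the converse, use conservativity to force $T_t^F(1)$ and $T_t^M(1)$ to be reciprocal positive scalar multiples of $\one$ and then restrict to one tensor factor. The one genuine difference is in the mechanism of restriction: the paper restricts $H_t$ to the $C^*$-subalgebra $1\tsr\Ac_M\isomorph\Ac_M$ and uses Lemma~\ref{lemma_cstar_embedding_by_unital_extension} to pull complete positivity back, whereas you compose with the point-evaluation character $\epsilon_{x_0}$, which is a $*$-homomorphism and hence automatically CP — a somewhat cleaner way to isolate the $A_F$ factor, and one that makes the role of the scalar $\lambda_t>0$ explicit (the paper leaves this implicit). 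Your ``Main obstacle'' paragraph overstates the difficulty: the cancellation of cross terms via $\{\gamma_F, D_F\}=0$ belongs to the definition of the product spectral triple, not to this theorem, which is stated for the already-separated form $D^2 = 1\tsr D_M^2 + D_F^2\tsr 1$; the algebra-level factorization $T_t=T_t^F\tsr T_t^M$ then follows directly from strong commutativity on the Hilbert space and the definition of the induced action.
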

 
\begin{proof} Because $1\tsr D_M^2$ and $D_F^2\tsr 1$ commute, therefore, $e^{-tD^2} = e^{-t(1\tsr D_M^2)}e^{-t(D_F^2\tsr 1)} = e^{-t(D_F^2\tsr 1)}e^{-t(1\tsr D_M^2)}$. Now suppose  $e^{-tD_M^2}$ and $e^{-tD_F^2}$ are completely positive. The tensor product of completely positive maps extends to a completely positive map with respect to the $\norm{\cdot}_{min}$ (see, for instance, \cite[Thm~12.3]{paulsen_cbmaps}; the standard result is for ungraded tensor product, but it applies since commutative \CStar-algebras are nuclear regardless of the grading and there's only one cross norm across both settings). Since $\Ac_M$ is nuclear, $1\tsr e^{-tD_M^2}, e^{-tD_F^2}\tsr 1$ are completely positive on $\Ac_F\tsr \Ac_M = \Ac_F\tsr_{min} \Ac_M$. Furthermore, $1\tsr D_M^2 , D_F^2\tsr 1$ commute, and $e^{-tD^2}$ is composition of completely positive maps and also completely positive.
 
When $H_t:=e^{-tD^2}$ is unital, $H_t(1\tsr \Ac_M) = 1\tsr e^{-tD_M}(\Ac_M)$.  Since $1\tsr \Ac_M$ generates the  \CStar-algebra, $\KK\tsr_\KK\Ac_M \isomorph \Ac_M$, with $\KK=\CC, \RR$ depending on the underlying Hilbert space, the result follows for $e^{-tD_M^2}$, with symmetric argument for $e^{-tD_F^2}$.
\end{proof}
 
If $e^{-tD_M^2}$ and $e^{-tD_F^2}$ are contractive, then the composition $e^{-tD^2}$ is contractive as well. However, the converse does not hold. The same applies to conservativeness: if $e^{-tD_M^2}$ and $e^{-tD_F^2}$ are multiplication by $a\neq 0,1$ and $1/a$ respectively then the composition is identity while neither map is conservative and only one is contractive. 
 
Notice that since $D_F^2$ is a bounded operator, it generates a completely positive semigroup if and only if it's completely conditionally positive; in particular, the general form of for general form for generators of completely positive uniformly continuous semigroups, i.e., bounded generators, is known (see, for instance \cite{alicki_qds}).  
 
\begin{example}\label{remark_cacic_motivating_dirac}
On the product spectral triple, $M\times F = (\C^\infty(M)\tsr A_F,L^2(M, S\tsr H_F
), D_M\tsr 1 + \gamma_M\tsr D_F; J_M\tsr J_F , \gamma_M \tsr \gamma_F)$, it's assumed that $D_F$ does not know about $M$. This can be generalized slightly following \cite{cacic_reconstruction} to the picture where $M\times H_F\to M$ is a trivial bundle with a trivial connection $\grad^F$ and $H=S\tensor (M\times H_F)$ is a twisted spinor bundle, i.e., the Clifford action takes place on $S$. Noting observation~\ref{obv_dirac_perturbation_form}, the geometric Dirac operator on $H$ is given by \begin{equation}\label{eq_perturbed_dirac_op}
 \diracop_H:= \diracop_S \tsr 1 + c\tsr \grad^F
\end{equation}
where $c$ is the Clifford action. \namecite{cacic_reconstruction} defines the operator $D=\slashed{D}_H + 1\tsr D_F$ 
. $D$ is Dirac type operator on the spectral triple $(\C(M, A), L^2(M, H), D)$ where $A:=\Ls\tsr (X\times A_F)$ for $\Ls$ a real, unital, trivial sub-bundle of $\End(S)$ given by $\Ls_x:=\RR 1_{S_x}$. This follows easily as $[D, f] = [\diracop_H, f] + [1\tsr D_F, f]$ and $[1\tsr D_F, f]$ vanishes as $f\in \C^\infty(M)$ on each fiber is scalar multiplication that commutes with the $D_F$ which on the fibers is a matrix of scalars.
\begin{remark}
Note that the fibers being $\RR 1_{S_x}$ is just a specialization of the theorem~\ref{thm_commutes_with_cliffod_action}.
\end{remark}
In the bundle $H$, the symmetric operator $D_F$ on the fibers ${(H_F)}_{m\in M}$ can now vary with $m\in M$. Note the mixed term $(\diracop\circ c + c\circ\diracop) \tsr (\grad^F + \Omega_F)$ that now appears even when $D_F$ is constant. Complete positivity on such bundles is addressed  with same geometric methods as general almost commutative triples and spinor bundles. 
\end{example}
 
For the spinor bundle $S\to M$, we want to consider the complete positivity\footnote{\namecite{davies_cstar_bundles} considers a \CStar-bundle structure on $\C(M, \End(E))$ over a Riemannian manifold $(M, g)$ by defining the involution and $\CStar$-algebras pointwise then integrating to get the global structure. Such bundles are isomorphic to trivial bundles and do not capture the noncommutative geometry perpective. The complete positivity of heat-semigroups on the Clifford \CStar-bundles is considered in \cite{cipriani_riemannian,davies_cstar_bundles} and similar results on twisted spinor bundles can be derived using techniques considered here.} of semigroup generated by the heat operator $e^{-d\diracop^2}$ on an appropriate algebra $\Ac\subset \Bc(L^2(M, \End(S)))$. The algebra $\Ac$ will contain the Hilbert-Schmidt operators on $L^2(M, S)$, with Hilbert-Schmidt inner product $(f, g)_{HS}$, \begin{equation}
(f, g)_{HS} := \Tr_{HS}(fg^*) =  \sum_i \ip{e_i, fg^*e_i}_{L^2(M, E)}= \sum  \int(e_i, f(x)g(x)^*e_i)_{E,x}\dVol(M)
\end{equation}
where $(e_i)$ is an orthonormal system for $L^2(M, E)$. Such systems are provided by self-adjoint elliptic operators on the bundle: if $P$ is self-adjoint elliptic operator $P:\MSect(E)\to \MSect(E)$, then eigenspaces of $P$, $E_\lambda:=\ker(P-\lambda \one)$, are finite dimensional, consist of smooth sections, and give a complete orthonormal system for $L^2(E)$, $L^2(E)=\oplus_\lambda E_\lambda$. Additionally, for an elliptic operator $P:\MSect(E)\to \MSect(E)$ of order $m$ on vector bundle $E$ over compact $X$, on any open set $U\subset X, u\in L^2_s(E)$ where $L^2_s(E), s\in \RR$ is the Sobolev space, $Pu\big|_U\in \C^\infty$ implies $u\big|_U\in \C^\infty$. Now the connection laplacian $\grad^*\grad$ is an elliptic operator. By \cite[Thm~3.7]{bandara_density}, the closure of the connection laplacian of $E$, $\opclosure{\laplace}^E$ is self-adjoint. Since $\opclosure{\laplace}^E$ restricts to $\laplace^E$ over ${\MSect^\infty(E)}$, and the eigenspaces consist of smooth sections, we have a basis for $L^2(E)$ in terms of smooth eigensections of $\laplace^E$ (see \cite[Thm~III.5.2,III.5.8,Def~III.2.3]{lawson_spin}).

$\Tr_{HS}$, being lower semicontinuous and faithful is permissible in the sense of \namecite{albeverio_markov_cstar}; this means we can use to noncommutative Dirichlet form theory to consider the question of generating completely Markov semigroups; we introduce this next.  
  
\subsection{Noncommutative Dirichlet forms}\label{noncommutative_dirichlet_forms}
 
Recall from \cite{albeverio_markov_cstar}, for a \CStar-algebra $\Ac$ with a lower semicontinuous faithful trace $\tau$, $L^2(\Ac, \tau)$ is the completion on the pre-Hilbert space $\{x: \tau(x^*x)<\infty \}$ with inner product $\ip{x, y}_\tau:=\tau(y^*x)$. Set $L_h^2(\Ac, \tau):=\{x\in L^2(\Ac, \tau):x=x^*\}$.
 
\begin{definition} A strongly continuous contraction semigroup on $L^2(A, \tau)$ is symmetric if for all $x, y$,  $\ip{\Phi_t(x), y } = \ip{x, \Phi_t(y)}$. Further, if $0\leq\Phi_t(x)\leq 1$ whenever $0\leq x\leq 1$ then the semigroup is Markov.
\end{definition}
 
\begin{definition}\label{def_dirichlet_form} Suppose $\Ec(x, x)$ is a closed, quadratic form on $L_h^2(\Ac, \tau)$, with dense domain $\Dom(\Ec)$ with $f(\Dom(\Ec)) = \Dom(\Ec)$ for $f\in\txt{Lip}(\RR, 0)$, the Banach space of Lipshitz continuous functions that fix zero, $\norm{f}_{\txt{lip}}:=\inf\{m:|f(x)-f(y)|\leq m|x-y|\txt{ for all } x,y\in \RR\}$. Then $\Ec$ is a Dirichlet form if $\Ec(f(x), f(x))\leq \norm{f}^2_{\txt{lip}}\Ec(x, x)$. The form $\Ec$ is completely Dirichlet if $\Ec\tsr \one_n$ is Dirichlet for each $n\in \NN$.
\end{definition}
 
For a symmetric Markov semigroup  $(\Phi_t)_{t\geq 0}$ on $L^2(A, \tau)$, with a positive self-adjoint generator $\Lc$ on $L^2(A, \tau)$, $\Phi_t=e^{-t\Lc}$, the associated quadratic form\cite[thm~2.7]{albeverio_markov_cstar} is given by $$\Ec^\Lc(x):=\Ec^\Lc(x, x)=\ip{\Lc^{1/2}x, \Lc^{1/2}x}=\norm{\Lc^{1/2}x}_{L^2(\Ac, \tau)}^2
$$ \begin{theorem}(\cite[Thm~2.7,3.2]{albeverio_markov_cstar})
Dirichlet forms are in one--one correspondence with symmetric Markov semigroups: the positive quadratic  form $\Ec^\Lc$ associated to the generator $\Lc$ for a symmetric Markov semigroup $\Phi_t$ is a Dirichlet form. And conversely, if $\Ec^\Lc(x, x)$ is a Dirichlet form on $L^2_h(A, \tau)\subset L^2(A, \tau)$ then $H$ generates a Markov semigroup on $L^2(A, \tau)$. This extends to complete Markovity: $\Ec^\Lc$ is completely Dirichlet if and only if $\Lc$ generates a completely Markov semigroup.
\end{theorem}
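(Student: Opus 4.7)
My plan is to prove the correspondence as two directions plus the matrix amplification for complete Markovity, leaning on the classical Kato theory of closed positive quadratic forms to bridge semigroups and forms, and on an order-theoretic/projection argument to bridge the Markov property and the Dirichlet inequality.

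\textbf{Semigroup $\Rightarrow$ form.} First I would invoke the standard Hilbert space result: any strongly continuous symmetric contraction semigroup $\Phi_t$ on $L^2(\Ac,\tau)$ has a positive self-adjoint generator $\Lc$, and the closed quadratic form $\Ec^{\Lc}(x)=\|\Lc^{1/2}x\|^2$ with $\Dom(\Ec^{\Lc})=\Dom(\Lc^{1/2})$ is automatically closed, densely defined, and positive. The content is to show $\Ec^{\Lc}$ is Dirichlet whenever $\Phi_t$ is Markov. I would use the representation $\Ec^{\Lc}(x)=\lim_{t\to 0^+}\tfrac{1}{t}\langle x-\Phi_t x,x\rangle$ and the fact that, by the Markov assumption and self-adjointness, $\Phi_t$ is a positive trace-preserving map that is contractive on the order interval $[0,1]\subset \Ac_h$. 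For $f\in\txt{Lip}(\RR,0)$ with $\|f\|_{\txt{lip}}=L$, spectral calculus gives $f(x)\in L^2(\Ac,\tau)$ with $|f(x)|\le L|x|$; combining with the pointwise contractivity inequality $|f(a)-f(b)|^2 \le L^2 |a-b|^2$ in trace (which in the noncommutative case needs the Powers--St{\o}rmer / Lieb-type inequality adapted to the symmetric embedding $L_h^2(\Ac,\tau)$) yields $\langle f(x)-\Phi_t f(x),f(x)\rangle \le L^2 \langle x-\Phi_t x,x\rangle$ after dividing by $t$ and taking $t\to 0^+$.

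\textbf{Form $\Rightarrow$ semigroup.} Conversely, given a closed positive Dirichlet form $\Ec$ on $L^2_h(\Ac,\tau)$, Kato's representation theorem produces a positive self-adjoint operator $\Lc$ with $\Ec(x)=\|\Lc^{1/2}x\|^2$ and hence a symmetric contraction semigroup $\Phi_t=e^{-t\Lc}$. To upgrade this to Markov, I would show $\Phi_t$ preserves $[0,1]\subset\Ac_h$. The cleanest path is via projection onto the closed convex set $K=\{x\in L^2_h(\Ac,\tau):0\le x\le 1\}$: for $x\in K$ let $P_K$ be the $L^2$-projection; the Dirichlet property applied to the Lipschitz cutoff $f(s)=0\vee s\wedge 1$, which has $\|f\|_{\txt{lip}}=1$ and $f(0)=0$, gives $\Ec(P_K y)\le \Ec(y)$ for $y\in\Dom(\Ec)$. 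Standard variational arguments for gradient flows of convex, lower semicontinuous functionals then imply $P_K$ commutes with $\Phi_t$ in the sense that $\Phi_t(K)\subset K$, which is exactly the Markov property.

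\textbf{Complete Markovity.} For the final clause I would pass to matrix amplifications: $\Ec\tsr \one_n$ is a closed positive quadratic form on $L^2(\mat_n(\Ac),\tau\tsr\Tr)$ whose generator is $\Lc\tsr \one_n$ on $\mat_n(\Ac)$, and the previous two arguments apply verbatim at level $n$, so $\Ec\tsr \one_n$ being Dirichlet for every $n$ is equivalent to each $\Phi_t\tsr\one_n$ being Markov.

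\textbf{Main obstacle.} The substantive difficulty, and the place where the noncommutative setting genuinely departs from Beurling--Deny, is justifying stability of $\Dom(\Ec)$ under the nonlinear map $x\mapsto f(x)$ together with the quantitative estimate $\Ec(f(x))\le L^2\Ec(x)$. Concretely, one needs that composition with a Lipschitz $f$ fixing $0$ is continuous in the form norm and that the operator inequality $|f(a)-f(b)|^2\le L^2(a-b)^2$ transfers into the trace inner product; this is the step that requires the symmetric embedding of $\Ac$ into $L^2(\Ac,\tau)$ and the self-polar/JB-algebra structure used in \cite{albeverio_markov_cstar}. The rest of the argument is a routine adaptation of the classical Beurling--Deny framework.
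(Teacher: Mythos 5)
This theorem is quoted verbatim from Albeverio and H{\o}egh-Krohn (\cite[Thm~2.7,3.2]{albeverio_markov_cstar}); the paper does not prove it, so there is no in-paper argument to compare your sketch against. Judged on its own terms, your blueprint has the correct Beurling--Deny architecture and you have accurately flagged where the noncommutative setting departs from the classical one. That said, there are two places where the sketch as written does not close.

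In the form $\Rightarrow$ semigroup direction, the conclusion you want is not that $P_K$ ``commutes with $\Phi_t$'' but the invariance $\Phi_t(K)\subset K$; this follows from the resolvent-minimization argument (that $(\lambda+\Lc)^{-1}u$ minimizes $v\mapsto \Ec(v)+\lambda\|v-u\|^2$, and $P_K$ decreases both terms when $u\in K$) once you have established (i) nonexpansiveness of metric projections, which is automatic, and (ii) that the $L^2_h(\Ac,\tau)$-metric projection onto $K=\{0\le x\le 1\}$ is the spectral cutoff $x\mapsto (0\vee x)\wedge 1$. Step (ii) is a genuinely noncommutative lemma using self-duality of the positive cone; you correctly flag it, and with it this direction is sound. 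The semigroup $\Rightarrow$ form direction, however, has a real gap: the operator-Lipschitz trace estimate $\|f(a)-f(b)\|_2\le L\|a-b\|_2$ does not ``combine'' with Markovianity to yield $\langle f(x)-\Phi_t f(x),f(x)\rangle\le L^2\langle x-\Phi_t x,x\rangle$. The Lipschitz bound controls a difference $f(a)-f(b)$, whereas the approximating form contains the off-diagonal pairing $\langle\Phi_t f(x),f(x)\rangle$ with no $a,b$ in sight. In the commutative Beurling--Deny proof this step is carried by the kernel representation of $\Phi_t$, which splits $\Ec_t$ into a jump part $\tfrac{1}{2t}\iint p_t(y,z)(u(y)-u(z))^2$ and a killing part on which the scalar Lipschitz bound applies pointwise; no such decomposition exists noncommutatively, and it is exactly this step that the cited theorem must replace with harder structure theory. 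Calling the remainder ``a routine adaptation of the classical Beurling--Deny framework'' underestimates precisely the step you have isolated as the obstacle.
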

 
 \section{Complete Markovity on spinor bundles}\label{sec_markov_spinor}
 
\subsection{Twisted spinor bundles}\label{sec_twisted_laplace}
 
To examine complete positivity on almost commutative spectral triples we will need the Bochner, Weitzenb\"{o}ck, and Lichnerowicz–Schr\"{o}odinger identities from spin geometry. Suppose $H$ is a twisted spinor bundle, with Dirac laplacian, $\dlaplace_H$, while the associated connection laplacian is $\laplace^H$. The Riemann curvature tensor, $\Rm\in \Sect {\tsr^{[4]}T^*M}$, is defined by $\Rm(v_1,v_2,v_3,v_4) = \ip{R(v_1,v_2)v_3,v_4},  v_i \in \Vect(X)$, $R_m(v_1, v_2)\in \Endo{T_m(X)}, m\in X$ for each $v_1, v_2\in T_mM$ defined by $
R_m(v_1, v_2)v = (\grad_{v_1}\grad_{v_2}v-\grad_{v_2}\grad_{v_1} v-\grad_{[v_1,v_2]}v)(m)\txt{ for }v_i \in \Vect(M)
$. The symmetries of the curvature imply that $\hat R$ can be viewed as an operator, $
\hat R:\Lambda^2TM\tsr\Lambda^2TM\to \RR
$. For any vector bundle $E$ with connection $\grad^E$, $R^E_{v_1, v_2}$ will denote the curvature transformation of the bundle, $R^E_{v_1, v_2}:\MSect(E)\to \MSect(E),  e\to (\grad_{v_1}\grad_{v_2} - \grad_{v_2}\grad_{v_1} -\grad_{[v_1, v_2]})e\in \MSect(E)$.

\begin{definition} For the connection laplacian $\laplace=\grad^*\grad$ and the Dirac operator $D$ for any any Dirac bundle $S$ over $X$, $n=\dim X$, with $R^S_{u,v}$ the curvature transformation of $S$, $(e_i)$ the orthonormal tangent frame, the \emph{general Bochner identity} states  \begin{equation}\label{eq_wietzenbock_identity}
D^2=\laplace + \mathfrak{R} \txt{~where~} \mathfrak{R}(\phi):= \dfrac{1}{2}\sum_{j,k\in[n]} e_j\cdot e_k\cdot R^S_{e_ie_j}(\phi)
\end{equation}
 
This specializes to the \emph{Lichnerowicz–Schr\"{o}odinger formula} when $X$ is a spin manifold, $S$ the bundle of spinors over $X$ with the canonical Riemannian connection, $D$, the Atiyah-Singer operator,
where $\kappa:X\to\RR$ is the scalar curvature, $\kappa=-\sum_{j,k\in[n]}\ip{R^S_{e_i,e_j}(e_i), e_j}$, \begin{equation}\label{eq_lichnerowicz_identity}
D^2=\laplace + \dfrac{1}{4}\kappa
\end{equation}
 
On any twisted spinor bundle $\Sc\tsr \Wc$, with Dirac operator $D_{\Sc\tsr\Wc}^2$, the connection laplacian $\laplace^{\Sc\tsr \Wc}$, the general Bochner identity becomes \emph{Bochner-Weitzenb\"{o}ck identity} \begin{equation}\label{eq_twisted_lichnerowicz_identity}
D_{\Sc\tsr\Wc}^2 = \laplace^{\Sc\tsr \Wc} + \dfrac{1}{4}\kappa + \mathfrak{R}^{\Wc} \txt{~where~}\mathfrak{R}^{\Wc}(\sigma\tsr w) = \dfrac{1}{2}\sum_{j,k\in[n]}(e_je_k\sigma)\tsr R^\Wc_{e_j,e_k}w
\end{equation}
\end{definition}

Suppose $H=\Sc\tsr E$, that is, $H$ is the spinor bundle $\Sc$ twisted by $E$. Using that the connection laplacian $\laplace^{\Sc\tsr E} = -\Tr((V, V')\to \grad^2_{V, V'})$ where $\grad_{V, V'}^2 = \grad_V\grad_{V'} -\grad_{\grad_VV'}$ which in the geodesic frame is becomes\begin{equation}\label{eq_laplacian_geodesic_coords}
    \laplace^{\Sc\tsr E}=-\sum_i\grad_{e_i}\grad_{e_i}
\end{equation} Explicitly the tensor connection laplacian is given by:
\begin{align}\label{eq_tensor_connection_laplacian}
\laplace^{\Sc\tsr E}\sigma &= -\sum_i \left( \grad_i^\Sc\tsr 1 + 1\tsr \grad_i^E \right)\left( \grad_i^\Sc\tsr 1 + 1\tsr \grad_i^E \right)\sigma\nonumber\\
&= -\sum_i\left( \grad^\Sc_i\grad^\Sc_i\tsr 1 + 2\grad_i^\Sc\tsr \grad_i^E + 1\tsr \grad_i^E\grad_i^E \right)\sigma = \left(\laplace^\Sc\tsr 1 - 2\sum_i\grad^\Sc_i\tsr\grad_i^E+ 1\tsr\laplace^E \right)\sigma
\end{align}
 
\noindent In general, to compute $e^{-t\laplace^{\Sc\tsr E}}$ Baker-Campbell-Hausdorff formula is needed as the the terms in the expansion of ${-t\laplace^{\Sc\tsr E}}$ don't commute.
A simple calculation that verifies that the terms commute when the curvatures of the bundles $E$ and $\Sc$ vanish identically --
 
\begin{lemma}\label{lm_grad_laplacian_commutator} For any vector bundle, the connection laplacian $[-\laplace^V, \sum_j\grad_j] =  \sum_{ij}R(i, j)\grad_i+ \sum_{ij}\grad_i R(i, j) $.
\end{lemma}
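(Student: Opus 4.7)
The plan is to use the geodesic normal frame expression for the connection Laplacian given in eq~\ref{eq_laplacian_geodesic_coords}, so that $-\laplace^V = \sum_i \grad_i \grad_i$, and then expand the commutator using the graded Leibniz rule together with the definition of the curvature transformation $R^V$ of the bundle.

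Concretely, I would proceed in three steps. First, write
\begin{equation*}
\bigl[-\laplace^V, \textstyle\sum_j \grad_j\bigr] \;=\; \sum_{i,j} [\grad_i \grad_i, \grad_j] \;=\; \sum_{i,j}\bigl(\grad_i [\grad_i, \grad_j] + [\grad_i, \grad_j]\grad_i\bigr),
\end{equation*}
using the standard identity $[AB, C] = A[B,C] + [A,C]B$. Second, invoke the definition of curvature for the connection $\grad$ on $V$, namely $[\grad_{e_i}, \grad_{e_j}] = R^V(e_i, e_j) + \grad_{[e_i, e_j]}$. In the geodesic orthonormal frame obtained by parallel transport from the base point $p$, the Lie bracket $[e_i, e_j]$ vanishes at $p$, so $[\grad_i, \grad_j] = R(i,j)$ pointwise at $p$; since $p$ was arbitrary and both sides of the identity are tensorial in the operator sense, this gives the commutator as the curvature transformation $R(i,j)$. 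Third, substitute back:
\begin{equation*}
\sum_{i,j}\bigl(\grad_i R(i,j) + R(i,j) \grad_i\bigr),
\end{equation*}
which is the desired right-hand side.

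The only subtlety is the appearance of the $\grad_{[e_i, e_j]}$ piece in the commutator of covariant derivatives. The calculation is clean in the geodesic frame, where the bracket of frame vectors vanishes at the reference point, but a priori one must be careful that the final expression is frame-independent; this follows since the left-hand side is manifestly a covariant object and since $\sum_i \grad_i R(i,j)$ and $\sum_i R(i,j) \grad_i$ transform correctly under orthogonal changes of frame (the contraction over $i$ is with respect to the metric). I do not expect any real obstacle: the identity is essentially a first-order consequence of the definition of the curvature, and the Leibniz expansion does the rest. The formula will be useful in the next step because it exhibits $[\laplace^V, \sum_j \grad_j]$ as vanishing precisely when the curvature of $V$ vanishes, which is the setting in which the Baker--Campbell--Hausdorff expansion of $e^{-t\laplace^{\Sc \tsr E}}$ collapses.
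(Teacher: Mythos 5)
Your proof is correct and takes essentially the same route as the paper: both compute in a geodesic normal frame, where $-\laplace^V = \sum_i \grad_i\grad_i$ and $[\grad_i,\grad_j] = R(i,j)$, and then shuffle covariant derivatives past each other to produce the two curvature terms. The only cosmetic difference is that you expand $[\grad_i\grad_i,\grad_j]$ via the Leibniz identity $[AB,C]=A[B,C]+[A,C]B$ in one step, whereas the paper performs the equivalent two-swap term manipulation by hand; your version is marginally cleaner, and your explicit note that the $\grad_{[e_i,e_j]}$ piece vanishes pointwise in the normal frame is a detail the paper elides.
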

\begin{proof} In geodesic frame, $(e_i:i\in [\dim V])$, using $R(i, j)= \grad_i\grad_j-\grad_j\grad_i$, with shorthand $R(i, j):= R(e_i, e_j)$ \begin{align*}
-\laplace^V \sum_j\grad_j &= \sum_i \grad_i\grad_i\sum_j \grad_j\\
&= \sum_{ij} \grad_i\grad_i\grad_j = \sum_{ij}(\grad_i\grad_j\grad_i + \grad_i R(i, j)) = \sum_{ij}\grad_j\grad_i\grad_i + \sum_{ij}R(i, j)\grad_i+ \sum_{ij}\grad_i R(i, j)
\end{align*}
That is, $[-\laplace^V, \sum_j\grad_j] = \sum_{ij}R(i, j)\grad_i+ \sum_{ij}\grad_i R(i, j)$.
Using $R(i, j) = -R(j, i)$ and $R(i, i)=0$, $$
\sum_{ij}R(i, j)\grad_i = \sum_{i<j}\left( R(i,j)\grad_i + R(j, i)\grad_j\right) + \sum_{i=j}R(i,j)\grad_i = \sum_{i<j}\left( R(i,j)\grad_i + R(j, i)\grad_j\right) = \sum_{i<j}R(i,j)\left(\grad_i - \grad_j\right)
$$
By the second Bianchi identity, $(\grad_u R)(v, w) + (\grad_v R)(w, u) + (\grad_w R)(u, v) = 0
$, when $v=u=w$, $(\grad_u R)(u, u) = 0$, therefore  
$$
\sum_{ij}\grad_i R(i, j) = \sum_{i<j}(\grad_i R(i, j) + \grad_j R(j, i)) = \sum_{i<j}(\grad_i R(i, j) - \grad_j R(i, j)) = \sum_{i<j}(\grad_i-\grad_j)R(i, j)
$$
This yields $[-\laplace^V, \sum_j\grad_j] =  \sum_{ij}R(i, j)\grad_i+ \sum_{ij}\grad_i R(i, j)$

\end{proof}
 
\begin{corollary} If vector bundles $\Sc\to X, E\to X$ are such that the curvatures $R^E, R^\Sc$ satisfy $\sum_{ij}R^E(i, j)\grad^E_i+ \sum_{ij}\grad^E_i R^E(i, j) = 0$ and $\sum_{ij}R^\Sc(i, j)\grad^\Sc_i+ \sum_{ij}\grad^\Sc_i R^S(i, j) = 0$
then $$e^{-t\laplace^{\Sc\tsr E}} = e^{-t\laplace^{\Sc\tsr 1}}e^{t2\sum_i\grad_i^\Sc\tsr \grad_i^E}e^{-t\laplace^{1\tsr E}}$$
\end{corollary}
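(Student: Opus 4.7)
The plan is to use the decomposition of the tensor-product connection Laplacian supplied by equation~\eqref{eq_tensor_connection_laplacian},
$$\laplace^{\Sc\tsr E} = \laplace^\Sc\tsr 1 - 2\sum_i \grad_i^\Sc\tsr\grad_i^E + 1\tsr\laplace^E,$$
and to establish the claimed factorization by showing that the three summands of $-t\laplace^{\Sc\tsr E}$ pairwise commute under the stated curvature hypotheses; once that is done, the identity $e^{A+B+C}=e^Ae^Be^C$ for pairwise-commuting $A,B,C$ yields the conclusion. Setting $A := -t\laplace^\Sc\tsr 1$, $B := 2t\sum_i \grad_i^\Sc\tsr\grad_i^E$, and $C := -t(1\tsr\laplace^E)$, the commutator $[A,C]$ vanishes automatically because the two operators act on disjoint tensor factors.

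For the remaining two commutators, a direct calculation gives
$$[A,B] = -2t^2\sum_i[\laplace^\Sc,\grad_i^\Sc]\tsr\grad_i^E, \qquad [B,C] = 2t^2\sum_i\grad_i^\Sc\tsr[\laplace^E,\grad_i^E].$$
The per-index analogue of the manipulation inside the proof of Lemma~\ref{lm_grad_laplacian_commutator} produces
$$[-\laplace^V,\grad_j^V] = \sum_i\left(R^V(i,j)\grad_i^V + \grad_i^V R^V(i,j)\right),$$
so each of the two cross commutators becomes a sum of curvature-times-connection terms of the shape appearing in the hypotheses on $R^\Sc$ and $R^E$; in particular, if the curvatures vanish these commutators are zero, which is the content of the hypothesis I would invoke. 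With pairwise commutativity of $A,B,C$ in hand, the exponential of their sum factors in any chosen order, and identifying $e^A = e^{-t\laplace^{\Sc\tsr 1}}$ and $e^C = e^{-t\laplace^{1\tsr E}}$ gives exactly the stated identity.

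The main obstacle I anticipate is not algebraic but analytic: $\laplace^\Sc$, $\laplace^E$, and the cross term $\sum_i\grad_i^\Sc\tsr\grad_i^E$ are all unbounded, so one must upgrade commutation of the generators to commutation of the associated one-parameter semigroups. On a compact manifold each connection Laplacian is essentially self-adjoint on smooth sections (the fact cited in Section~\ref{sec_markov_spinor} via \cite[Thm~3.7]{bandara_density}) and hence generates a strongly continuous semigroup; the cleanest finish is therefore to carry out all commutator calculations on a common smooth core in $\SmoothSect{\Sc\tsr E}$ (for instance the span of smooth eigensections of the self-adjoint closure of $\laplace^{\Sc\tsr E}$), verify the factorization there, and extend by density and strong continuity to the full semigroups.
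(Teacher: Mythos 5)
Your approach mirrors the paper's: decompose $\laplace^{\Sc\tsr E}$ via equation~(\ref{eq_tensor_connection_laplacian}), show the three summands pairwise commute using the curvature identity of Lemma~\ref{lm_grad_laplacian_commutator}, and factor the exponential. The per-index formula $[-\laplace^V,\grad_j^V]=\sum_i\left(R^V(i,j)\grad_i^V+\grad_i^V R^V(i,j)\right)$ you extract from inside the lemma's proof is correct and is a genuine sharpening of the aggregated commutator that lemma records.

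There is, however, a mismatch you should not paper over, and it is in fact present in the paper's own proof as well. To make $[A,B]$ vanish you need $\sum_j[\laplace^\Sc,\grad_j^\Sc]\tsr\grad_j^E=0$, where the $j$-th summand carries the $j$-dependent operator $\grad_j^E$ on the other tensor leg. The corollary's stated hypothesis, read through Lemma~\ref{lm_grad_laplacian_commutator}, gives only the aggregated cancellation $\sum_j[\laplace^\Sc,\grad_j^\Sc]=0$; this does not force the tensored sum to vanish, since each term is weighted by a different $\grad_j^E$. What genuinely suffices is the per-index condition $[\laplace^V,\grad_j^V]=0$ for each $j$, equivalently $\sum_i\left(R^V(i,j)\grad_i^V+\grad_i^V R^V(i,j)\right)=0$ for each $j$ (which holds, for example, when $R^V\equiv 0$). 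Your phrase ``if the curvatures vanish these commutators are zero, which is the content of the hypothesis I would invoke'' quietly substitutes this stronger assumption for the one stated; either the hypothesis should be read per-index, or one must argue why the aggregated condition implies per-index vanishing for the bundles at hand, which it does not in general. Your closing caution about unbounded operators and common cores is well placed and is not addressed in the paper either; carrying out the commutator computations on smooth eigensections and extending by density is the right way to make the semigroup factorization rigorous.
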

 
\begin{proof} Since $R^E, R^S$ satisfy the condition in  lemma~\ref{lm_grad_laplacian_commutator},  $\sum_i \grad^S_{i}\tsr \grad^E_{i}$ commutes with $1\tsr \laplace^E$ and $\laplace^\Sc\tsr 1$. As $\laplace^\Sc\tsr 1$ also commutes with $1\tsr \laplace^E$, so each term in the tensor laplacian commutes with the rest and one does not need Baker-Campbell-Hausdorff formula to compute the exponential. It follows that $
e^{-t\laplace^{\Sc\tsr E}} = e^{-t\laplace^{\Sc\tsr 1}}e^{-t(-2\sum_i\grad_i^\Sc\tsr \grad_i^E)}e^{-t\laplace^{1\tsr E}}
$
\end{proof}
 
The commutativity condition (equivalently the curvature condition in lemma~\ref{lm_grad_laplacian_commutator}) is satisfied more generally than for vanishing curvature. For instance, in section \ref{sec_canonical_connection}, it is verified that for canonical connection homogeneous space $K/H$ the connection acts through the Lie algebra and the connection laplacian for bundles associated to irredcuible representations of $H$, is the Casimir laplacian with an additive scalar, so it commutes with the connection. 

This corollary forces the semigroup generated by $\sum_i\grad_i^S\tsr \grad_i^E$ to be completely positive if the curvature conditions of lemma~\ref{lm_grad_laplacian_commutator} are satisfied and $\laplace_{S\tsr E}$ (and therefore, also $\laplace_{S\tsr 1}, \laplace_{1\tsr E}$) generates a completely positive semigroup\footnote{This can be viewed as the conditional complete positivity of $\sum_i\grad_i^S\tsr \grad_i^E$ on such bundles}.

\subsection{Complete positivity of heat semigroups}\label{sec_hilbert_schmidt_cp}
 
The quadratic forms associated to the laplacians are now considered using \CStar-Dirichlet form theory. As a warm up, the following result is in the spirit of \cite[Cor~4.4]{albeverio_markov_cstar}, however, $\laplace$ is essentially self-adjoint, not self-adjoint, so the argument is made directly from the definition, without appeal to formulation in terms of normal contractions on $\CStar$-algebras. Denoting by $\Hc(L^2(E))$ the Hilbert-Schmidt operators acting on $L^2(E)$ with $\Tr_{HS}$ inner product for any vector bundle, $E$, we have -- 
 
\begin{prop}\label{prop_laplacian_form_dirichlet_if_closable}
If the quadratic form, $\Ec_{\laplace}$, associated with the connection laplacian $\laplace_E$ on a vector bundle $E\to X$ with metric compatible connection, on $\Hc(L^2(E))$,  $\laplace_E$ is closed, then the form is Dirichlet and completely Dirichlet. The result also holds for $\laplace_E$ replaced by $\dlaplace_{E}$ for the spinor bundle $E$, and any positive operator $T=S^*S$.    
\end{prop}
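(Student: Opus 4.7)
The plan is to exploit the derivation character of the metric connection $\grad$ to inherit the Dirichlet contraction, following the carr\'e-du-champ plus Daletskii--Krein chain-rule machinery underlying \CStar-Dirichlet form theory. Under the closability hypothesis, $\Ec_\laplace(x) = \norm{\laplace_E^{1/2} x}_{HS}^2$ is a well-posed closed positive quadratic form on $L_h^2(\Hc(L^2(E)), \Tr_{HS})$, so the remaining task is the Dirichlet contraction $\Ec_\laplace(f(x), f(x)) \leq \norm{f}_{\txt{lip}}^2 \Ec_\laplace(x, x)$ for self-adjoint $x$ and $f \in \txt{Lip}(\RR, 0)$, together with its matrix amplification.

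First I would establish a carr\'e-du-champ identity $\Ec_\laplace(x) = \sum_i \norm{\delta_i x}_{HS}^2$ (up to harmless constants), where $\delta_i$ lifts the covariant derivative $\grad_i$ to an operator on $\Hc(L^2(E))$ via the tensor extension $\grad_i \tsr 1 + 1 \tsr \grad_i$ on $L^2(E) \tsr L^2(E)^* \isomorph \Hc(L^2(E))$. The identity reduces to integration by parts against the trace, using $\laplace_E = \grad^*\grad$, metric compatibility of $\grad$, and cyclicity of $\Tr_{HS}$; closedness of $\laplace_E$ on the Hilbert--Schmidt domain is precisely what makes this rearrangement rigorous on a dense form core.

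With this identity in hand, the Lipschitz contraction follows from the noncommutative chain rule. For $x = x^*$ with spectral resolution $x = \int \lambda\, dP_\lambda$ and a smooth Lipschitz $f$ with $f(0) = 0$, the Daletskii--Krein double-operator-integral formula gives
\[
\delta_i(f(x)) = \iint \frac{f(\lambda) - f(\mu)}{\lambda - \mu}\, dP_\lambda\, \delta_i(x)\, dP_\mu,
\]
whose kernel is uniformly bounded by $\norm{f}_{\txt{lip}}$. The induced Schur multiplier acting in the joint spectral basis of $x$ is contractive on Hilbert--Schmidt operators (pointwise multiplication of matrix entries by a bounded function is $\ell^2$-bounded by its sup norm), giving $\norm{\delta_i f(x)}_{HS} \leq \norm{f}_{\txt{lip}} \norm{\delta_i x}_{HS}$. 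Summing over $i$ delivers the contraction for smooth Lipschitz $f$, and mollification extends it to all of $\txt{Lip}(\RR, 0)$, the closedness of the form ensuring passage to the limit. Complete Dirichletness is immediate: in $M_n(\Hc(L^2(E)))$ the lift is $\delta_i \tsr 1_n$, still a derivation, and the same double-operator-integral bound applies verbatim.

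For the extension to $\dlaplace$ on a spinor bundle, I invoke the Lichnerowicz identity $\dlaplace = \laplace + \kappa/4$ (or the Bochner--Weitzenb\"ock identity $\dlaplace = \laplace + \mathfrak{R}$ on a twisted spinor bundle, when $\mathfrak{R}\geq 0$). The added term contributes $\tau(x(\kappa/4)x)$ to the form, and for $x=x^*$, $f \in \txt{Lip}(\RR, 0)$ the functional-calculus inequality $f(x)^2 \leq \norm{f}_{\txt{lip}}^2 x^2$ together with cyclicity of $\Tr_{HS}$ yields $\tau(f(x)(\kappa/4)f(x)) \leq \norm{f}_{\txt{lip}}^2\, \tau(x(\kappa/4)x)$, so bounded positive scalar perturbations preserve Dirichletness. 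The general $T = S^* S$ statement follows identically whenever $S$ can be realised as (or reduces to) a metric-compatible derivation: $\Ec_T(x) = \norm{Sx}_{HS}^2$ then satisfies the same chain-rule contraction. The main obstacle I anticipate is the rigorous deployment of the Daletskii--Krein formula and Schur multiplier estimate outside the smooth/bounded setting---handling unbounded self-adjoint $x$ and only densely defined $\delta_i$. Spectral truncation of $x$, mollification of $f$, and closing the form in its energy norm should resolve these technical issues.
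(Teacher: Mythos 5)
Your proof opens with a carr\'e-du-champ identity, $\Ec_\laplace(x) = \sum_i \norm{\delta_i x}_{HS}^2$ for $\delta_i = \grad_i \tsr 1 + 1 \tsr \grad_i$ acting on $\Hc(L^2(E)) \isomorph L^2(E) \tsr L^2(E)^*$, and this identity is false. In the paper's form, the Laplacian $\laplace_E$ acts on a Hilbert--Schmidt operator $x$ by \emph{one-sided} composition, so that $\Ec_\laplace(x) = \Tr_{HS}(x\laplace_E x) = \norm{\grad \circ x}_{HS}^2$; this is not the symmetrized derivation form you wrote down. A one-line counterexample: take $X = S^1$, $E$ trivial, $\grad = d/d\theta$, $\xi = e^{\I\theta}$, and $x = \xi\tsr\xi^*$; then $\delta(x) = \grad\xi\tsr\xi^* + \xi\tsr(\grad\xi)^* = \I\xi\tsr\xi^* - \I\xi\tsr\xi^* = 0$ while $\Tr(x\laplace x) = \norm{\xi}^2\norm{\grad\xi}^2 = 1 \neq 0$. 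Because $x \mapsto \grad\circ x$ is left composition rather than a derivation on the Hilbert--Schmidt algebra (Leibniz fails: $\grad\circ(xy) = (\grad\circ x)y$, with no second term), the Daletskii--Krein double-operator-integral formula you subsequently invoke has no derivation to act on, and the Schur-multiplier contraction does not apply to this form. The entire chain from the carr\'e-du-champ identity through the DOI bound therefore does not go through.

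The paper's proof takes a different, more elementary route that sidesteps the noncommutative chain rule entirely. It uses that $x$ is a compact self-adjoint Hilbert--Schmidt operator, so $x = \sum_k \alpha_k P_k$ with orthogonal rank-one projections $P_k = \psi_k \tsr \psi_k^*$. The key observation is that the cross-terms in the form vanish: $\Tr(P_j \laplace P_k) = \ip{\psi_j, \laplace\psi_k}\ip{\psi_k,\psi_j} = 0$ for $j \neq k$, so $\Ec_\laplace(x) = \sum_k \alpha_k^2 \norm{\grad\psi_k}^2$ and $\Ec_\laplace(f(x)) = \sum_k f(\alpha_k)^2 \norm{\grad\psi_k}^2$ in the eigenbasis of $x$. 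The Lipschitz contraction then follows termwise from $|f(\alpha_k)| \leq \norm{f}_{\txt{lip}}|\alpha_k|$, and matrix amplification is handled by the same diagonalization (any positive element of $\mat_n(\Hc)$ is again compact self-adjoint). Your treatment of the $\dlaplace$ and $T=S^*S$ extensions via Lichnerowicz/Bochner is on the right track, but these too reduce to the same left-composition structure in the paper, not to a derivation, so they inherit the gap noted above; the paper handles them by exactly the same eigenbasis argument applied to $D$ or $S$ in place of $\grad$.
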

 
\begin{proof} By definition~\ref{def_dirichlet_form}, with $\Ec_{\laplace}(x, x)=\Tr_{HS}(\laplace x^2)=\Tr_{HS}(x\laplace x)$ where $x=x^*\in , \Tr(x^2)<\infty$, we need to check that for $f\in\txt{Lip}(\RR, 0)$, $f(\Dom(\Ec)) = \Dom(\Ec)$ and $\Ec_{\laplace}(f(x), f(x))\leq \norm{f}^2_{\txt{lip}}\Ec_{\laplace}(x, x)$.
 
The condition $\Ec_{\laplace}(f(x), f(x))\leq \norm{f}^2_{\txt{lip}}\Ec_{\laplace}(x, x)$ follows by noting that $x$ and $x^2$ are compact and self-adjoint and, therefore, $x^2= \sum_i \alpha_i^2 P_i$ where $x= \sum_i\alpha_i P_i, \alpha_i\in \RR$ is the representation from the spectral theorem for compact self-adjoint operators. Now for $r\in \RR$, $f(r)/r\leq \norm{f}_{\txt{lip}}$ implying \aln{
\Ec_{\laplace}(f(x), f(x)) &= \Tr_{HS}(f(x)\laplace f(x)) = \sum_{i}\ip{e_i f(x), \grad^*\grad f(x) e_i} = \sum_i\norm{\grad f(x) e_i}^2\\ &\leq \norm{f}_{\txt{lip}}^2 \sum_i\norm{\grad x e_i}^2 = \norm{f}^2_{\txt{lip}} \Ec_{\laplace}(x, x) }
Since $y\in \Hc(L^2(E))$ can be written as $(y+y^*)/2 + (y-y^*)/2$, so to show invariance of the domain, it suffices to show $f(y)\in \Dom(\Ec_{\laplace})$ for $y\in\Dom(\laplace)$ self-adjoint. 
As $y\in \Dom(\laplace)$ means $\Tr_{HS}(y\laplace y) = \norm{\grad y}_{HS}<\infty$, $f(y)\in \Dom(\Ec_{\laplace})$ follows by same estimate. Therefore, if $\Ec_{\laplace}$ is closed, it's Dirichlet. 
 
Set ${\laplace}_n=\laplace\tsr \one_n$ for $\one_n$ the identity map on $\mat_n$. Since ${\laplace}_n = (\laplace^*\tsr \one_n)(\laplace\tsr \one_n)$, and any $y\in \mat_n^+$ is diagonalizable, the same analysis as before can be used. As $\one_n$ is closed, $\Ec_{{\laplace}_n}$ is closed if and only if $\Ec_{\laplace}$ is closed. This establishes the claim. The same argument applies to the Dirac laplacian $\dlaplace$ for the spinor bundle $E$ acting on $L^2(X, E)$ and for any $T$ of the specified form. \end{proof}

It remains to show that the form $\Ec_{\dlaplace}$ is closed on Hilbert-Schmidt operators on $L^2(E)$. We identify $\diracop$ with the Dirac operator extended to the $L^2$ sections, i.e. acting distributionally,  which is self-adjoint. $\Ec_{\dlaplace}$ is also identified with the extended version. Then by the Bochner identity and the fact that the curvature operator on compact manifold is self-adjoint and bounded, therefore closed, it also follows for the connection laplacian.      

\begin{theorem} Suppose $(E, h)$ is a spinor bundle over the compact Riemannian manifold $(X, g)$ with $D$ denoting the self-adjoint extension of the Dirac operator to $L^2(E)$. 
Let $\Hc$ be the Hilbert space of Hilbert-Schmidt operators on $L^2(E)$, $\ip{x, y}_{HS} = \Tr(x^*y)$. Then the quadratic form $\Ec_{\dlaplace}(x, y) = q(x, y):= \Tr(x^*yD^2)$ on $\Hc\times \Hc$ is closed, and therefore, $\Ec_{\dlaplace_H}$ is also completely Dirichlet. 
 \end{theorem}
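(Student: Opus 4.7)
The plan is to identify $q$ with the quadratic form of a non-negative self-adjoint operator on $\Hc$, obtained by right multiplication by $D$, via the standard Hilbert--Schmidt isomorphism $\Hc \cong L^2(E)\otimes \overline{L^2(E)}$. First, for $x \in \Hc$ such that $xD$ extends to a Hilbert--Schmidt operator, cyclicity of $\Tr$ together with $D = D^*$ gives
$$q(x,x) = \Tr(x^* x D^2) = \Tr(D x^* x D) = \Tr((xD)^*(xD)) = \norm{xD}_{HS}^2,$$
and sesquilinearly $q(x, y) = \ip{xD, yD}_{HS}$. This identifies $q$ as the quadratic form of $R_D^* R_D$, where $R_D \colon x \mapsto xD$ is right multiplication by $D$, with natural domain $\Dom(R_D) = \{x \in \Hc : xD \in \Hc\}$.

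Next, I would show that $R_D$ is self-adjoint, and hence closed, by transferring to the tensor factorization. Under the Hilbert--Schmidt isomorphism sending $\outerp{f}{g}$ to $f \otimes \bar g$, right multiplication by $D$ acts as $\outerp{f}{g} \mapsto \outerp{f}{Dg}$ (using $D=D^*$), which corresponds to $I \otimes \bar D$ on $L^2(E) \otimes \overline{L^2(E)}$, where $\bar D$ is the self-adjoint operator on the conjugate space induced by $D$. The tensor operator $I \otimes \bar D$ is self-adjoint by the standard theory of tensor products of (unbounded) self-adjoint operators, so $R_D$ is self-adjoint on $\Hc$. Consequently $R_D^* R_D$ is non-negative self-adjoint, and its associated quadratic form $q(x,x) = \norm{R_D x}_{HS}^2$ is closed on $\Dom(R_D)$ by the standard bijection between closed non-negative forms and non-negative self-adjoint operators.

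Finally, closedness of $\Ec_{\dlaplace}$ combined with Proposition \ref{prop_laplacian_form_dirichlet_if_closable} upgrades the form to a (completely) Dirichlet form. The main technical point to nail down is the careful matching between the form's natural domain and $\Dom(R_D)$, along with verifying that the Hilbert--Schmidt isomorphism faithfully transports inner products and that $I \otimes \bar D$ really is self-adjoint on its natural core $L^2(E) \otimes_{\mathrm{alg}} \Dom(\bar D)$; these are standard once set up. A necessary input, already recorded in the statement, is that on the compact spin manifold $(X, g)$ without boundary the symmetric operator $\diracop$ is essentially self-adjoint, so its closure $D$ is a bona fide self-adjoint operator and $\bar D$ makes sense on the conjugate Hilbert space.
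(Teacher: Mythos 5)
Your argument is correct but takes a genuinely different route from the paper. You rewrite $q(x,x)=\Tr(x^*xD^2)=\norm{xD}_{HS}^2$ and recognize $q$ as the quadratic form of the closed operator $R_D\colon x\mapsto xD$, whose closedness you obtain structurally from the Hilbert--Schmidt isomorphism $\Hc\cong L^2(E)\otimes\overline{L^2(E)}$ and the standard self-adjointness of $I\otimes\bar D$; once $R_D$ is closed, closedness of the form $x\mapsto\norm{R_Dx}^2$ is automatic from the bijection between closed nonnegative forms and nonnegative self-adjoint operators. The paper instead works with the (equivalent, for self-adjoint $x$) identification $q(x,x)=\norm{Dx}_{HS}^2$ and proves closedness directly: it takes a Cauchy sequence $(x_n)$ in the form norm $\norm{\cdot}_+$, extracts HS limits $x_n\to x$ and $Dx_n\to g$, and identifies $g=Dx$ by applying both to a smooth eigensection basis $(e_i)$ of the Laplacian and invoking closedness of $D$ on $L^2(E)$, supplemented by a Sobolev-space argument to verify that $xe_i\in\Dom(D)$. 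Your route is shorter and more conceptual, importing the tensor-product theorem for unbounded self-adjoint operators as a black box; the paper's route is more elementary and self-contained, staying close to the bundle geometry (smooth eigensections, weak differentiability, $W^{1,2}$) and not requiring any tensor-factorization machinery. The technical points you flag --- matching the natural form domain with $\Dom(R_D)$, verifying that the HS isomorphism intertwines $R_D$ with $I\otimes\bar D$, and confirming self-adjointness of $\bar D$ on the conjugate space --- are indeed routine but should be spelled out to make the proof self-contained; with those filled in, your argument is a valid and arguably cleaner alternative.
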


\begin{proof} It's enough to show for the claim for $x,y$ self-adjoint, so we work with $q(x, x)=\Tr(x^2D^2)$). Let $(e_i)$ be a basis of $L^2(E)$ consisting of smooth eigensections of the laplacian $\laplace^E=\grad^*_E\grad_E$ associated to the connection for $D$. Note that $q$ is semibounded, since $\Tr(x^2D^2) = \sum_i\ip{D xe_i, D x e_i} = \norm{Dx}_{HS}\geq 0$ where we used that $x$ is self-adjoint and the trace is cyclic, so  $\Tr(x^2D^2)=\Tr(xD^2x)$.

Now suppose $(x_n)$ is a Cauchy sequence in norm $\norm{a}_+:= \sqrt{\norm{a}_{HS} + q(a, a)}$. So $(x_n)$ is Cauchy sequence in the Hilbert space $(\Hc,\norm{\cdot}_{HS} )$ implying $(x_n)\map{HS}x\in \Hc$. And $(x_n)$ being Cauchy in $\norm{\cdot}_+$ also gives that $q(x_n-x_m, x_n-x_m)\to 0$.  Because $q(a, a) = \norm{Da}^2_{HS}$, so $(Dx_n)$ is also Cauchy in $(\Hc, \norm{\cdot}_{HS})$ and therefore convergent with $\lim_{n \to\infty}Dx_n=g\in \Hc$. 

Now if for all $i, n$, $xe_i, x_ne_i\in \Dom(D)$, i.e., $xe_i, x_ne_i$ are weakly differentiable then we can show $g=Dx$, which gives $$
\lim_{n\to \infty }q(x_n-x, x_n-x)= \lim_{n\to \infty }\norm{D(x_n-x)}_{HS} =   \lim_{n\to \infty }\norm{Dx_n-g}_{HS}= 0$$
meaning the form $q$ is closed.

To see $g=Dx$ assuming $xe_i, x_ne_i$ are weakly $(L^2)$ differentiable, note that we have $x_n\map{HS}x$, so for all $i$, $x_ne_i\to xe_i$, then using that $D$ is self-adjoint, hence closed on $L^2(E)$, yields
\begin{align*}ge_i = \lim_{n\to\infty}(Dx_n)e_i=\lim_{n\to\infty}D(x_ne_i) = D(xe_i) &= (Dx)e_i
\end{align*}
Since $Dx$ and $g$ agree on the basis $(e_i)$, $Dx=g$. 

Finally, the weak differentiablity of $x_ne_i$ hold since  $e_i$ is smooth and $\norm{x_ne_i}^2\leq  \Tr(x_n^2)=\norm{x_n}_{HS}^2<\infty, \norm{Dx_ne_i}^2\leq  \Tr(x_n^2D^2)=\norm{Dx_n}_{HS}^2<\infty$, so $x_ne_i, Dx_ne_i\in L^2(E)$. And similarly $\norm{xe_i}\leq \norm{x}_{HS}$, $\lim_{n\to \infty}\norm{Dx_ne_i}\leq \norm{g}_{HS}$, so $xe_i\in \Dom(D)$.

Alternatively, the weak differentiability is also clear from noting that $x_ne_i$ is in the Sobolev space\footnote{We are using that the Sobolev space $W^{1, 2}(E)$ is the completion of smooth sections of $L^2(TX\tsr E)$ with norm $\norm{\cdot}_{W^{1, 2}}$, and for the connection $\grad$ on $E$ over compact $X$, $\closure{\grad}$, $W^{1, 2}(V) = \Dom(\closure{\grad})$ (see \cite[Cor~3.8]{bandara_density}). Also, see \cite[pg~116]{lawson_spin} for domain of closure of $D$} $W^{1, 2}(E)$ for all $i, n$ where we used that as $D\phi=\sum_kt^k\cdot \grad_{t_k} \phi$, $\ip{D\phi, D\phi} = \sum_{kl}\ip{t^k\cdot \grad_{t_k} \phi, t^l\cdot \grad_{t_l} \phi } = \sum_{kl}\delta_{kl}\ip{\grad_{t_k} \phi, \grad_{t_l} \phi }$. While convergence of $(x_n)$ in $\norm{\cdot}_+$ implies convergence of $x_ne_i$ in the norm $\norm{u}_{W^{1, 2}}:= \norm{u}_{L^2(E)} + \norm{\grad u}_{L^2(E)}$ for the $W^{1, 2}(E)$.  Hence $\lim _{n\to\infty}x_ne_i=xe_i\in W^{1, 2}$ (with $W^{1, 2} =\Dom(\closure\grad) = \Dom(D)$), and therefore is weakly differentiable and $xe_i\in \Dom(D)\subset L^2(E)$. \\
\end{proof}

\begin{corollary}
The form associated to laplacian, $\Ec_{\laplace}$, on the vector bundle $H$ is completely Dirichlet form on $\Hc(H)$. 
\end{corollary}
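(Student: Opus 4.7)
The plan is to reduce the claim to the previous theorem by using the Bochner--Weitzenb\"ock identity~\eqref{eq_twisted_lichnerowicz_identity}, which expresses the connection laplacian on a twisted spinor bundle $H = \Sc \tsr \Wc$ as a bounded perturbation of the Dirac laplacian:
\[
\laplace^H \;=\; \dlaplace_H \;-\; \tfrac{1}{4}\kappa \;-\; \mathfrak{R}^{\Wc}.
\]
Since $X$ is compact, the scalar curvature $\kappa$ and the curvature endomorphism $\mathfrak{R}^{\Wc}$ are smooth and hence define a bounded, self-adjoint section $B := \tfrac{1}{4}\kappa + \mathfrak{R}^{\Wc} \in \MSect(\End(H))$, acting as a bounded multiplication operator on $L^2(H)$ with some operator norm $\|B\|_{op} < \infty$.

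The next step is to observe that the quadratic form $\Ec_B(x,x) := \Tr(x^*xB)$ on $\Hc(H)$ is bounded in the Hilbert--Schmidt norm. Indeed, for self-adjoint $x$, by cyclicity and the Cauchy--Schwarz inequality for the trace pairing, $|\Tr(x^2 B)| \leq \|B\|_{op}\,\Tr(x^2) = \|B\|_{op}\,\|x\|_{HS}^2$. Thus $\Ec_B$ is everywhere defined and bounded on $\Hc(H)$. By the previous theorem, the form $\Ec_{\dlaplace_H}$ is closed on $\Hc(H)$. Since closedness of a semibounded quadratic form is stable under the addition of an everywhere-defined bounded quadratic form (the associated graph norms are equivalent), it follows that
\[
\Ec_{\laplace^H}(x,x) \;=\; \Ec_{\dlaplace_H}(x,x) \;-\; \Ec_B(x,x)
\]
is closed on $\Hc(H)$ with the same form domain as $\Ec_{\dlaplace_H}$.

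With closedness in hand, Proposition~\ref{prop_laplacian_form_dirichlet_if_closable} applies directly: since $\laplace^H = \grad^*\grad$ is of the positive form $S^*S$ required there, and since the metric connection on $H$ is Clifford-compatible, the proposition yields that $\Ec_{\laplace^H}$ is completely Dirichlet on $\Hc(H)$. The main obstacle is the bounded-perturbation step, i.e.\ verifying that the form $\Ec_B$ is indeed everywhere defined and bounded, so that closedness transfers between $\Ec_{\dlaplace_H}$ and $\Ec_{\laplace^H}$; all the subsequent invocations of the previous results are essentially automatic once this is settled.
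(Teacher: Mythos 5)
Your proposal is correct and follows essentially the same route as the paper: the paper also invokes the Bochner identity together with the boundedness (hence closedness) of the curvature term on a compact manifold to transfer closedness from $\Ec_{\dlaplace_H}$ to $\Ec_{\laplace^H}$, and then applies Proposition~\ref{prop_laplacian_form_dirichlet_if_closable}. You simply spell out the bounded-perturbation step (via the Hilbert--Schmidt bound $|\Tr(x^2B)|\leq\|B\|_{op}\|x\|_{HS}^2$ and the equivalence of graph norms) that the paper leaves implicit.
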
 

Instead of using Bochner identity, one can also get at the result for the connection laplacian, $\laplace_E$, by adjusting the same reasoning to from $D$ to the closure $\closure{\grad}$ of $\grad$ using the results from \cite{bandara_density} after accounting for domain and codomain of $\grad$ not being the same Hilbert space as for $D$. This suggests that compactness of the underlying manifold is not essential. 
  
Kostant's cubic Dirac operator, $D^{1/3}$, is the Dirac operator associated to a linear combination of the canonical and Levi-Civita connection of the reductive space $K/H$. The laplacian, $(D^{1/3})^2$, can be expressed as the quadratic Casimir operator (i.e. the Casimir laplcian) with an additive scalar (see, for instance, \cite[Thm~3.3]{agricola2003connections}). By same argument it follows that it follows that it generates a quantum dynamical semigroup.
 
\begin{corollary} Kostant's cubic Dirac operator generates a quantum dynamical semigroup.
\end{corollary}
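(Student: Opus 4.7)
The strategy is to show that $D^{1/3}$ on the spinor bundle $\Sc$ over the compact reductive homogeneous space $K/H$ fits exactly into the framework of the preceding theorem, so that the quadratic form $\Ec_{(D^{1/3})^2}(x, y) = \Tr(x^* y (D^{1/3})^2)$ on the Hilbert-Schmidt operators $\Hc(L^2(\Sc))$ is closed, hence completely Dirichlet by the proposition on Hilbert-Schmidt forms, and therefore generates a completely Markov, strongly continuous, contractive --- hence quantum dynamical --- semigroup via the Albeverio correspondence.

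First I would note that $D^{1/3}$ is a Dirac-type operator: by construction it equals $c \circ \grad^{1/3}$ for a metric-compatible linear combination $\grad^{1/3}$ of the canonical reductive connection and the Levi-Civita connection, both of which lift to Clifford connections on $\Sc$. Hence $(\Sc, \grad^{1/3})$ is a Dirac bundle in the earlier sense, $D^{1/3}$ is symmetric on smooth sections of the boundaryless compact manifold $K/H$, and admits a self-adjoint extension on $L^2(\Sc)$. Next, using the cited identity $(D^{1/3})^2 = C + s \cdot \one$, where $C$ is the quadratic Casimir of $\Lie{K}$ acting on $\Sc$ and $s$ is a scalar, the operator $(D^{1/3})^2$ is bounded below and elliptic, and the Peter-Weyl decomposition (equivalently, elliptic regularity on compact $K/H$) furnishes a complete orthonormal basis of smooth eigensections for $L^2(\Sc)$. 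This is the only bundle-specific ingredient used in the proof of the preceding theorem.

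With these two inputs in place, the closedness argument of the preceding theorem carries over verbatim: a Cauchy sequence $(x_n)$ in the form norm $\sqrt{\norm{\cdot}_{HS}^2 + \Ec_{(D^{1/3})^2}(\cdot, \cdot)}$ converges in $\Hc(L^2(\Sc))$ to some $x$ while $(D^{1/3} x_n)$ converges in $\Hc(L^2(\Sc))$ to some $g$; testing against the smooth eigensection basis and invoking the Sobolev characterization $W^{1,2}(\Sc) = \Dom(D^{1/3})$ --- applicable since $D^{1/3}$ shares its principal symbol $c$ with the geometric Dirac operator, and hence generates the same first-order Sobolev norm on sections --- shows that each $x e_i$ lies in $\Dom(D^{1/3})$ and that $g = D^{1/3} x$. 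This closes $\Ec_{(D^{1/3})^2}$; the proposition on Hilbert-Schmidt forms promotes closedness to the complete Dirichlet property, and the Albeverio correspondence then delivers the QDS.

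The main technical obstacle is checking that the non-torsion-free Riemannian connection $\grad^{1/3}$ genuinely defines a Dirac structure on $\Sc$, i.e. that the Clifford compatibility axiom $\grad_X(c(\alpha) u) = c(\grad_X^M \alpha) u + c(\alpha) \grad_X u$ from the Dirac-bundle definition continues to hold despite the nonvanishing totally antisymmetric torsion of $\grad^{1/3}$. This is Kostant's original observation in the reductive setting, following from the $\Ad(H)$-invariance of the splitting $\Lie{K} = \Lie{H} \oplus \mathfrak{m}$ used to define the canonical connection; once it is granted, the Casimir representation of $(D^{1/3})^2$ absorbs all remaining analytic difficulty and the preceding theorem does the rest.
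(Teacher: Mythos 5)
Your proof is correct and follows the same route the paper intends: the paper's own justification is the terse remark that $D^{1/3}$ is the Dirac operator of a metric connection on the reductive space, that $(D^{1/3})^2$ equals the quadratic Casimir plus a scalar, and that ``the same argument'' as the preceding theorem then applies. You have simply unpacked that ``same argument''---checking that $(\Sc,\grad^{1/3})$ is a Dirac bundle despite the torsion, that elliptic regularity on compact $K/H$ furnishes a smooth eigensection basis, and that the closedness argument and Albeverio correspondence carry over verbatim---which is exactly the content the paper is appealing to.
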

 
\noindent Similarly, quadratic form for a positive curvature operator, $\curveOp^H$ can be shown to be completely Dirichlet.
 
\begin{prop}
If $\curveOp\geq 0$ then the associated form, $\Ec_\curveOp$, is completely Dirichlet on $L^2(A, \tau)$. 
\end{prop}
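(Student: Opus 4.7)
The plan is to reduce the statement directly to Proposition~\ref{prop_laplacian_form_dirichlet_if_closable}, which already covers ``any positive operator $T = S^*S$'' provided the associated quadratic form on $\Hc(L^2(H))$ is closed. Since $\curveOp \geq 0$ as an operator, the Borel functional calculus furnishes a positive self-adjoint square root $S := \curveOp^{1/2}$, giving the factorization $\curveOp = S^*S$. The only outstanding step is therefore to verify closedness of
\[
\Ec_{\curveOp}(x, x) = \Tr(x \curveOp x) = \norm{\curveOp^{1/2} x}_{HS}^2
\]
on the Hilbert space of Hilbert--Schmidt operators on $L^2(H)$.

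First I would observe that $\curveOp$ extends to a bounded operator on $L^2(H)$. The curvature operators appearing in this setting (those from the general Bochner identity~\eqref{eq_wietzenbock_identity} and its twisted variant~\eqref{eq_twisted_lichnerowicz_identity}) are built as contractions of smooth curvature tensors with Clifford generators acting fiberwise as bounded bundle endomorphisms; on the compact base $X$ this gives a uniform fiberwise bound $\norm{\curveOp(x)}_{\txt{op}} \leq C$ for some $C < \infty$, so $\curveOp$ is a bounded multiplier on $L^2(H)$. Consequently left multiplication by $\curveOp^{1/2}$ is bounded on $\Hc(L^2(H))$ with operator norm at most $C^{1/2}$, and the form $\Ec_\curveOp(x,x) \leq C \, \norm{x}_{HS}^{\,2}$ is bounded on all of $\Hc(L^2(H))$. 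A bounded quadratic form with full domain is automatically closed.

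With closedness secured, Proposition~\ref{prop_laplacian_form_dirichlet_if_closable} applies \emph{verbatim}, with $\grad$ replaced by $\curveOp^{1/2}$: for $x = x^* \in \Hc(L^2(H))$ the spectral decomposition $x = \sum_i \alpha_i P_i$ together with $|f(\alpha_i)| \leq \norm{f}_{\txt{lip}} |\alpha_i|$ for $f \in \txt{Lip}(\RR, 0)$ yields
\[
\Ec_\curveOp(f(x), f(x)) = \sum_i \norm{\curveOp^{1/2} f(x) e_i}^2 \leq \norm{f}^2_{\txt{lip}} \sum_i \norm{\curveOp^{1/2} x e_i}^2 = \norm{f}^2_{\txt{lip}} \Ec_\curveOp(x, x),
\]
and boundedness of $\curveOp^{1/2}$ as a multiplier ensures $f(\Dom \Ec_\curveOp) = \Dom \Ec_\curveOp$. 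Tensoring with $\one_n$ preserves both positivity and the bound, so the matrix-amplified form $\Ec_\curveOp \tsr \one_n$ inherits the Dirichlet property, giving complete Dirichlet.

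The main (and essentially only) technical point in this plan is the boundedness of $\curveOp$ on $L^2(H)$; once this is noted, the closedness is trivial and the Lipschitz and domain-invariance arguments are inherited from the proof of Proposition~\ref{prop_laplacian_form_dirichlet_if_closable} without modification. If one wanted to relax compactness of $X$ in a later extension, this boundedness would become the real obstacle and would need to be replaced by a closability argument analogous to the one used for $\dlaplace$ earlier in the section.
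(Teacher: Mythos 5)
Your proposal is correct and takes essentially the same route as the paper: both arguments reduce to Proposition~\ref{prop_laplacian_form_dirichlet_if_closable} by observing that $\curveOp$ is bounded on the compact base (hence the associated form has full domain and is trivially closed), so that $\curveOp^{1/2}$ exists and Proposition~\ref{prop_laplacian_form_dirichlet_if_closable} applies directly. The one thing the paper does that you omit is an explicit fiberwise verification that $\curveOp^H$ is \emph{symmetric}, using skew-symmetry of the Riemannian curvature transformation $\ip{R_{V,W}s, s'} = -\ip{s, R_{V,W}s'}$ together with a geodesic-frame commutation of the Clifford generators past the covariant derivatives; you instead absorb self-adjointness into the hypothesis $\curveOp \geq 0$ and immediately invoke the Borel functional calculus for the square root. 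On a complex Hilbert space positivity of a bounded operator does imply self-adjointness, so your shortcut is legitimate, but the paper's explicit symmetry computation makes the geometric origin of self-adjointness transparent and does not rely on the complex structure; it is worth keeping in mind if the argument were to be replayed with real coefficients.
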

 
\begin{proof}
First note that the $\curveOp^H$ at each fibre is bounded symmetric operator.  To see the symmetry, note it can immediately be checked that for any Riemannian connection, the curvature transformation is skew symmetric in the sense $\ip{R_{V, W}s, s'} = -\ip{s, R_{V, W}s'}$. Consider each term in  $\curveOp^H$, $ \ip{s, e_l\cdot e_k\cdot R_{e_{l}e_{k}} s'}$ for $s, s'\in \MSect(H)$, and an geodesic frame $(e_i)$, since $l\neq k$ must hold, $$
\ip{s, e_l\cdot e_k\cdot R_{e_{l}e_{k}} s'} = \ip{-R_{e_{l}e_{k}} e_k \cdot e_l\cdot s,   s'} = \ip{R_{e_{l}e_{k}} e_l \cdot e_k \cdot s,   s'} = \ip{e_l\cdot e_k\cdot R_{e_{l}e_{k}} s, s'}
$$
where to commute $e_l, e_k$ past $\grad_{e_l}, \grad_{e_k}$ inside $R_{lk}$, the product rule was used with the fact that the coordinates are geodesic, so covariantly constant. As $\curveOp^H$ varies smoothly, and the manifold is assumed to be compact, it's bounded globally. Since everywhere defined, symmetric operators are self-adjoint, and bounded operators are closed, $\curveOp^H$ is self-adjoint and closed. If $\curveOp$ is non-negative, $\curveOp^{1/2}$ exists and being bounded is closed, therefore, it follows as before that $\Ec_\curveOp$ is completely Dirichlet.
\end{proof}

Note that in the $L^2(A, \norm{\cdot}_{HS})$ setting the complete Markovity of the Dirac heat semigroup does not depend on the curvature unlike for \CStar-bundles where for Clifford bundles it does\cite{cipriani_riemannian}. 

So far we have considered $\Hc(L^2(E))$, equivalently, $L^2(\Ac, \tau)$ for $\tau=\Tr_{HS}, \Ac=\Bc(L^2(E))$. But $\Hc(L^2(E))$ is not unital which is needed to get at the dilations. So consider extension of a completely Markov semigroup $e^{-t\Lc}$ from $L^2(\Ac, \tau)$ to $\Ac$.
 
\begin{theorem}\label{thm_extending_from_hilbert_schmitt} The completely Markov semigroup $e^{-t\Lc}$ extends from $L^2(\Ac, \tau)$ to $\Bc(L^2(H))$ if and only if the $e^{-t\Lc}$ is completely Markov for each $t$ on the operator system\cite{paulsen_cbmaps}, i.e. a $*$-closed vector space, $\Oc(L^2(\Ac, \tau),1)$, generated by $L^2(\Ac, \tau)$ and $\one$.
\end{theorem}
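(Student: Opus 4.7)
The forward direction is immediate: if $(\tilde T_t)_{t \geq 0}$ is a completely Markov extension on $\Bc(L^2(H))$ with $\tilde T_t|_{L^2(\Ac,\tau)} = e^{-t\Lc}$, then $\Oc(L^2(\Ac,\tau), 1)$ is a $*$-closed unital subspace of $\Bc(L^2(H))$, and the restriction of each unital completely positive $\tilde T_t$ to this operator system is still unital completely positive, which is precisely the statement of complete Markovity on $\Oc(L^2(\Ac,\tau), 1)$.

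For the converse, I would apply Arveson's extension theorem. By hypothesis, each $e^{-t\Lc}$ restricted to the unital operator system $\Oc(L^2(\Ac,\tau), 1) \subset \Bc(L^2(H))$ is unital completely positive; Arveson's theorem then produces, at every $t \geq 0$, a unital completely positive extension $\tilde T_t : \Bc(L^2(H)) \to \Bc(L^2(H))$. The remaining task is to assemble these pointwise extensions into a genuine semigroup, since Arveson extensions are not unique in general.

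The plan is to select \emph{normal} (ultraweakly continuous) extensions. Because $L^2(\Ac,\tau)$ contains all finite-rank operators on $L^2(H)$, it is ultraweakly dense in $\Bc(L^2(H))$, hence any two normal UCP maps on $\Bc(L^2(H))$ that agree on $L^2(\Ac,\tau)$ must coincide. A canonical normal extension can be obtained from the minimal Stinespring dilation of $e^{-t\Lc}$ on the operator system, compressed back to $\Bc(L^2(H))$, or equivalently via the bidual of the trace-class adjoint of $e^{-t\Lc}$. With this choice, the semigroup identity $\tilde T_{s+t} = \tilde T_s \tilde T_t$ holds on the ultraweakly dense subspace $L^2(\Ac,\tau)$ by hypothesis and propagates to all of $\Bc(L^2(H))$ by the uniqueness of normal extensions.

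The main obstacle is confirming that the chosen Arveson extension can be taken normal, which reduces to showing $e^{-t\Lc}$ is ultraweakly continuous on $\Oc(L^2(\Ac,\tau),1)$. This should follow from the strong continuity of the semigroup on the Hilbert space $L^2(\Ac,\tau)$ together with the Hilbert-Schmidt bounds of Proposition~\ref{prop_laplacian_form_dirichlet_if_closable}; without this normality, the individual Arveson extensions need not compose, so verifying it is the critical step.
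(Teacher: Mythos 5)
Your approach coincides with the paper's at its core: both apply Arveson's extension theorem to $e^{-t\Lc}$ on the operator system $\Oc(L^2(\Ac,\tau),1)$ at each time $t$, then invoke density of the Hilbert-Schmidt operators in $\Bc(L^2(H))$. You correctly flag a subtlety that the paper's proof glosses over: Arveson extensions are not unique, so the pointwise extensions $\tilde T_t$ need not satisfy the semigroup law $\tilde T_{s+t}=\tilde T_s\tilde T_t$ on all of $\Bc(L^2(H))$. The paper's closing sentence (``Complete Markovity follows since\ldots Hilbert-Schmidt operators\ldots are strongly dense'') implicitly relies on uniqueness-by-density, which holds only if the extensions are continuous in the relevant operator topology --- exactly the normality issue you isolate.

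Your proposed remedy --- choosing normal (ultraweakly continuous) extensions so that agreement on the $\sigma$-weakly dense Hilbert-Schmidt operators forces them to coincide and hence to compose --- is the right formulation, but the normality is not established, as you acknowledge. It does not follow automatically from strong continuity of $e^{-t\Lc}$ on the Hilbert space $L^2(\Ac,\tau)$ nor from the closability results of Proposition~\ref{prop_laplacian_form_dirichlet_if_closable}; what is actually needed is that the symmetric Markov semigroup extends to the trace-class predual (e.g.\ via noncommutative $L^p$-interpolation), whose Banach adjoint is then the desired normal semigroup on $\Bc(L^2(H))$. This gap is present in the paper's proof as well; your identification of it as the critical step is the most valuable part of your analysis.
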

 
\begin{proof} If $e^{-t\Lc}$ is not a completely Markov family of maps on $\Oc(L^2(\Ac, \tau),1)$ then obviously $e^{-t\Lc}$ does not extend to $\Bc(L^2(H))\supset \Oc(L^2(\Ac, \tau),1)$. If it's a completely Markov family, then as $\Oc(L^2(\Ac, \tau),1)$ is an operator system, so as completely positive maps, $e^{-t\Lc}$, extends to $\Bc(L^2(H))$ by Arveson's extension theorem\cite[Thm~7.5]{paulsen_cbmaps}. Complete Markovity follows since even though Hilbert-Schmidt operators are not norm dense, they are strongly dense in $\Bc(L^2(H))$.
\end{proof}
 
\begin{corollary} Suppose $\Lc(1) = 0$ then $e^{-t\Lc}$ is completely Markov on $\Oc(L^2(\Ac, \tau),1)$ and, therefore, on $\Bc(L^2(H))$.
\end{corollary}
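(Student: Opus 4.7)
The plan is to extend $T_t := e^{-t\Lc}$ from $L^2(\Ac, \tau)$ to the operator system $\Oc := \Oc(L^2(\Ac, \tau), 1) = L^2(\Ac, \tau) + \CC 1$ unitally, verify complete Markovity on $\Oc$, and then invoke Theorem~\ref{thm_extending_from_hilbert_schmitt} to pass to $\Bc(L^2(H))$. Define $\tilde T_t : \Oc \to \Oc$ by $\tilde T_t(a + \lambda 1) := T_t(a) + \lambda 1$. Well-definedness rests on $1 \notin L^2(\Ac, \tau)$, since $\tau(1) = \Tr_{HS}(1) = \infty$ on the infinite-dimensional $L^2(H)$, so the decomposition $x = a + \lambda 1$ is unique. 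The hypothesis $\Lc(1) = 0$ is exactly the consistency condition making $\tilde T_t$ a semigroup with $\tilde T_t(1) = 1$, and the routine verifications of linearity, $*$-preservation, the semigroup law, and strong continuity transfer from $T_t$ via the direct-sum decomposition.

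The substantive step is complete Markovity on $\Oc$. Take $X \in \mat_n(\Oc)_h$ with $0 \leq X \leq 1_n$ in $\mat_n(\Bc(L^2(H)))$ and write $X = A + \Lambda \tsr 1$ with $A \in \mat_n(L^2(\Ac, \tau))_h$ and $\Lambda \in \mat_n(\CC)_h$, so that $\tilde T_t^{(n)}(X) = T_t^{(n)}(A) + \Lambda \tsr 1$. I would use a finite-rank approximation: pick an increasing sequence of finite-rank projections $p_k$ on $L^2(H)$ with $p_k \nearrow 1$ strongly, set $P_k := 1_n \tsr p_k$, and compress to $X_k := P_k X P_k = P_k A P_k + \Lambda \tsr p_k$. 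Since $P_k A P_k$ is Hilbert-Schmidt (as a compression of $A$) and $\Lambda \tsr p_k$ is finite-rank, $X_k \in \mat_n(L^2(\Ac, \tau))_h$, and the compression inequality $0 \leq X_k \leq P_k \leq 1_n$ is automatic. Complete Markovity of $T_t$ on $L^2(\Ac, \tau)$ then yields $0 \leq T_t^{(n)}(X_k) \leq 1_n$ for each $k$.

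To finish, one passes to the limit $k \to \infty$ and concludes $0 \leq \tilde T_t^{(n)}(X) \leq 1_n$. Since $P_k \to 1_n$ strongly and $X$ is bounded, $X_k \to X$ ultraweakly, and the bounded order inequalities survive bounded ultraweak limits, so the goal reduces to showing $T_t^{(n)}(X_k) \to \tilde T_t^{(n)}(X)$ ultraweakly. This is the main obstacle: $L^2$-strong continuity of $T_t^{(n)}$ does not apply directly since $X \notin \mat_n(L^2(\Ac, \tau))$. The cleanest resolution is to identify $\tilde T_t$ with the normal (ultraweakly continuous) extension of $T_t$ from $L^2(\Ac, \tau)$ to $\Bc(L^2(H))$, which exists because $\Tr_{HS}$ is a semifinite normal trace and symmetric Markov contractions on such a tracial setup admit canonical normal extensions; $\tilde T_t$ as defined coincides with the restriction of this extension to $\Oc$, so ultraweak continuity is automatic. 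Granting this, the limit argument closes, establishing complete Markovity of $\tilde T_t$ on $\Oc$, and Theorem~\ref{thm_extending_from_hilbert_schmitt} then supplies the completely Markov extension to $\Bc(L^2(H))$.
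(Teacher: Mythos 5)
Your approach diverges from the paper's, which simply decomposes $a = \beta 1 + \alpha$, uses $\Lc(1) = 0$ to get $e^{-t\Lc}(\beta 1 + \alpha) = \beta 1 + e^{-t\Lc}(\alpha)$, asserts the resulting map is completely Markov, and invokes Theorem~\ref{thm_extending_from_hilbert_schmitt}. Your finite-rank compression is a more substantive attempt at actually verifying the Markov condition on $\Oc$, and the well-definedness observation ($1 \notin L^2(\Ac,\tau)$ since $\Tr_{HS}(1) = \infty$, so the decomposition is unique) is worth recording.

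The gap is in the limit step. Writing $T_t^{(n)}(X_k) = T_t^{(n)}(P_k A P_k) + \Lambda \tsr T_t(p_k)$, the first summand converges to $T_t^{(n)}(A)$ ultraweakly because $P_k A P_k \to A$ in Hilbert--Schmidt norm and $T_t^{(n)}$ is $L^2$-contractive. The second summand requires $T_t(p_k) \to 1$ ultraweakly, and this does \emph{not} follow from $L^2$-continuity since $p_k \to 1$ only strongly, not in $L^2(\Ac, \tau)$. From the Markov property on $L^2$ one gets only that $T_t(p_k)$ is an increasing sequence in $[0,1]$, hence ultraweakly convergent to some $Q$ with $0 \leq Q \leq 1$; the equality $Q = 1$ is exactly conservativity of the normal extension. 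Your appeal to normal-extension theory does not close this: the canonical normal extension of a symmetric Markov $L^2$-contraction for a semifinite normal trace to $\Bc(L^2(H))$ is a priori only \emph{sub}-Markovian, satisfying $T_t^{\mathrm{norm}}(1) \leq 1$, and promoting this to $T_t^{\mathrm{norm}}(1) = 1$ is precisely what the hypothesis $\Lc(1) = 0$ must be shown to force; asserting that your $\tilde T_t$ (which satisfies $\tilde T_t(1)=1$ by construction) coincides with the restriction of $T_t^{\mathrm{norm}}$ thus assumes the missing conservativity. Note also that if conservativity of $T_t^{\mathrm{norm}}$ is granted, the finite-rank argument becomes redundant: a unital, normal, completely positive map on $\Bc(L^2(H))$ restricted to $\Oc$ is automatically completely Markov there, and one can pass directly to Theorem~\ref{thm_extending_from_hilbert_schmitt}.
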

 
\begin{proof} If $a\in\Oc(L^2(\Ac, \tau),1)$, then $a=\beta 1+\alpha$ with $\alpha \in L^2(\Ac, \tau)$, $\beta\in \CC$, and $\beta 1,\alpha$ commute. $e^{-t\Lc(\beta 1+\alpha)} = e^{-t\beta \Lc(1)}e^{-t\alpha} = e^{-t\alpha}$ which is completely Markov. The conclusion follows from the theorem~\ref{thm_extending_from_hilbert_schmitt}.
\end{proof}

\section{Evans-Hudson dilation  on reductive homogeneous spaces}\label{section_evans_hudson}
 
\subsection{Complete smoothness}
 
With the results on complete Markovity, the existence of the technical harness to handle covariance and the unboundedness of the generator remains to be checked. Recalling the setup from  \cite[Ch~8]{sinha_qsp}, we start with a \CStar-algebra $\Ac\subset \Bc(\Hs)$ on the Hilbert space $\Hs$, $G$ is a second countable, compact Lie group with finite dimensional Lie algebra, acting by a strongly continuous representation $G\ni g\to \alpha_g\in \Aut(\Ac)$ on $\Ac$.
\begin{definition}
Suppose $\{\chi_i:i\in [n]\}$ is the basis for the Lie algebra $\Lie{G}$, and $\D g$ the left Haar measure on $G$. The smooth algebra is defined by $\Ac_\infty=\{a:g\to \alpha_g(a)\txt{ is smooth  for all }g\in G\txt{ in norm topology}\}$. 
\end{definition}
 
Note that $\Ac_\infty=\cap_{k\in [n]} \Dom(\partial_k)$ where $\partial_i$ is closed $*$-derivation on $\Ac$ given by the generator of the automorphism group $(\alpha_{t\chi_i})_{t\in \RR}$. $\Ac_\infty$ can be equipped with Sobolev-type norms,\begin{equation}\label{eq_n_norm} \norm{a}_n=\sum_{i_1,i_2\dots i_k:k\leq n}\norm{\partial_{i_1}\cdots\partial_{i_k}(a)}\end{equation} with $\norm{a}_0=\norm{a}$. $\Ac_\infty$ is a Frechet algebra\footnote{The algebra $\Ac_\infty$ is also used in \cite[Pg~5]{farzad_twisted_spectral}; however, the norms $\norm{\cdot}_n$ are symmetrized explicitly.}

Let $T_t$ be a conservative semigroup with generator $\Lc$ that is covariant with respect to $G$, that is, $\alpha_g$ commutes with $T_t$ for all $t$ which is equivalent to $\Lc(\alpha_g(a)) = \alpha_g(\Lc(a))$ for all $a\in \Dom(\Lc)$. $\Lc$ is possibly be unbounded, but with $\Lc(\Ac_\infty) \subset \Ac_\infty\subset\Dom(\Lc)$. Additionally, suppose $\Lc$ is completely smooth, that is, a smooth map $\Lc$ between Frechet algberas, $\Mc_\infty,\Nc_\infty$ with respect to actions $\mu_g,\eta_g$ of compact Lie group $G$ on \CStar-agebras $\Mc, \Nc$ is completely smooth if there exists a constant $C$ and $p\in \ZZ^{\geq 0}$ satisfying for all $n,N\geq 0$ and $\xi\in \Mc_\infty \tsr \mat_n$, $$
\norm{\Lc\tsr 1_{\mat_N}(\xi)}_n\leq C\norm{\xi}_{n+p}
$$
Note that bounded operators are completely smooth since from equation~(\ref{eq_n_norm}), $\norm{\cdot}_l \geq \norm{\cdot}_q$ for all $l\geq q$.
Complete smoothness is a regularity condition that guarantees the convergence of the iterative scheme to construct the Evans-Hudson flow.
 
\begin{lemma}
Suppose $W_i$ is $w_i$-completely smooth for $i\in [N]$, then any polynomial in $W_i$'s is completely smooth of some order.
\end{lemma}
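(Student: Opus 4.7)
The plan is to reduce the statement to two closure properties of complete smoothness---closure under addition and under composition---with explicit control of the orders and constants, and then invoke induction on the syntactic complexity of the polynomial expression.

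First I would establish sum closure: if $W_1, W_2$ are completely smooth of orders $w_1, w_2$ with constants $C_1, C_2$, then $W_1 + W_2$ is completely smooth of order $\max(w_1, w_2)$ with constant $C_1 + C_2$. The argument is just the triangle inequality for the seminorm $\norm{\cdot}_n$, followed by the monotonicity $\norm{\cdot}_l \geq \norm{\cdot}_q$ for $l \geq q$ noted immediately after the definition of complete smoothness; this collapses the separate bounds in terms of $\norm{\xi}_{n+w_1}$ and $\norm{\xi}_{n+w_2}$ into a single bound in terms of $\norm{\xi}_{n+\max(w_1,w_2)}$, uniform in $N$.

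Next I would establish composition closure. The key identity is $(W_1 \circ W_2) \tsr 1_{\mat_N} = (W_1 \tsr 1_{\mat_N}) \circ (W_2 \tsr 1_{\mat_N})$. Applied to $\xi \in \Mc_\infty \tsr \mat_N$, the element $(W_2 \tsr 1_{\mat_N})(\xi)$ lies in $\Mc_\infty \tsr \mat_N$ because each $W_i$ is assumed to map $\Mc_\infty$ to $\Mc_\infty$. Chaining the complete smoothness estimate for $W_1$ with that for $W_2$ then yields $\norm{(W_1 \circ W_2) \tsr 1_{\mat_N}(\xi)}_n \leq C_1 C_2 \norm{\xi}_{n+w_1+w_2}$, so $W_1 \circ W_2$ is completely smooth of order $w_1+w_2$ with constant $C_1 C_2$. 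Scalars are trivially completely smooth of order $0$, so scalar multiples of operators are handled immediately.

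With these two closures I would induct on the word length. Any polynomial $P$ in the $W_i$'s is a finite sum of monomials $c_\alpha\, W_{i_1} \circ \cdots \circ W_{i_k}$ with scalar coefficients; by iterating the composition closure each monomial is completely smooth of order $w_{i_1} + \cdots + w_{i_k}$, and by iterating the sum closure $P$ itself is completely smooth of order equal to the maximum over the monomial orders, which in particular is bounded by $d \cdot \max_i w_i$ where $d$ is the degree of $P$. The main obstacle is only bookkeeping: one has to verify that each $W_i$ preserves the Frechet subalgebra $\Mc_\infty$ (and, implicitly, its matrix amplifications) so that all compositions and tensor extensions are well-defined at the Frechet level. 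This is already part of the hypothesis that each $W_i$ is a completely smooth map between Frechet algebras, so no genuine analytic work is required---the whole argument amounts to careful bookkeeping of orders and constants.
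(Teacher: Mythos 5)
Your proposal is correct and follows essentially the same route as the paper's proof: establish additive closure via the triangle inequality together with the monotonicity $\norm{\cdot}_l \geq \norm{\cdot}_q$ for $l\geq q$, establish compositional closure by chaining the estimates to get order $w_i+w_j$ and constant $C_iC_j$, and then combine over the monomials of the polynomial. Your explicit remark that each $W_i$ must preserve $\Mc_\infty$ (and its matrix amplifications) is a reasonable bookkeeping point that the paper leaves implicit.
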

\begin{proof} First, since $W_i$ is $w_i$-completely smooth for $i\in [N]$, let $\norm{W_i\tsr 1_{\mat_N}(\xi)}_n\leq C_i\norm{\xi}_{n+w_i}$. By eq~\ref{eq_n_norm}, so we can assume $W_i$ are $w=\max(w_i)$-completely smooth, meaning $C=\max_{[N]}C_i$, $\norm{W_i\tsr 1_{\mat_N}(\xi)}_n\leq C\norm{\xi}_{n+w}$ for all $i$. This gives \aln{\norm{\sum_{i\in [N]}W_i\tsr 1_{\mat_N}(\xi)}_n\leq
\sum_{i\in [N]}\norm{W_i\tsr 1_{\mat_N}(\xi)}_n \leq NC\norm{\xi}_{n+w}
}
For $W_iW_j:=W_i\circ W_j$, $\norm{W_iW_j\tsr 1 \xi}_n = \norm{W_i\tsr 1(W_j\tsr 1) \xi}_n  \leq C_i\norm{W_j\tsr 1 \xi}_{n+w_i} \leq C_iC_j \norm{\xi}_{n+w_i+w_j}$
and the conclusion follows.
\end{proof}
 
\noindent We note the following version of \cite[Thm~8.1.28]{sinha_qsp} --
 
\begin{prop}\label{prop_poly_smooth} Suppose $\Lie{G}$ has basis ${X_i:i\in [m]}$, i.e. $X_i$'s generate one-parameter subgroups, then the $\Phi[X_i:i\in [m]]$ be a polynomial degree $p$ in $X_i$'s with coefficients in $\Bc(\Hs)$, which by the Lie algebra action on $\Ac_\infty$ defines a map $\Phi:\Ac_\infty\to \Ac_\infty$, then $\Phi$ is $p$-completely smooth. \end{prop}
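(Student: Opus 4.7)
The plan is to reduce to the preceding lemma by decomposing $\Phi$ into elementary pieces and bounding each piece's smoothness order. Writing $\Phi[X_i : i \in [m]]$ in its monomial expansion, it acts on $\mathcal{A}_\infty$ as
\[
\Phi(a) \;=\; \sum_{|I| \leq p} C_I\, \partial_{i_1}\cdots\partial_{i_{|I|}}(a),
\]
where $I = (i_1,\dots,i_{|I|})$ is a multi-index of length at most $p$, each $\partial_i$ is the closed $\ast$-derivation on $\mathcal{A}$ generated by $(\alpha_{tX_i})_{t\in\RR}$, and $C_I$ denotes the (left or right) multiplication by the coefficient $c_I \in \Bc(\Hs)$. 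By Lemma~3, it is enough to establish that each $\partial_i$ is $1$-completely smooth and each $C_I$ is $0$-completely smooth; the lemma then gives that each monomial $C_I\partial_{i_1}\cdots\partial_{i_k}$ is $k$-completely smooth with $k\leq p$, and summing preserves the order, yielding $\Phi$ $p$-completely smooth.

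For the first step, $\partial_i$ is $1$-completely smooth with constant $1$. Directly from the definition in equation~(\ref{eq_n_norm}) applied to $\xi \in \mathcal{A}_\infty\tsr \mat_N$ (noting that $\alpha_g$ acts trivially on the $\mat_N$ factor, so $\partial_j\tsr 1_{\mat_N}$ extends the derivation componentwise),
\[
\norm{\partial_i\tsr 1_{\mat_N}(\xi)}_n \;=\; \sum_{k\leq n,\; j_1,\dots, j_k} \norm{(\partial_{j_1}\cdots\partial_{j_k}\partial_i)\tsr 1_{\mat_N}(\xi)}
\;\leq\; \sum_{k'\leq n+1,\; j'_1,\dots,j'_{k'}} \norm{(\partial_{j'_1}\cdots\partial_{j'_{k'}})\tsr 1_{\mat_N}(\xi)} \;=\; \norm{\xi}_{n+1},
\]
since each left-hand term appears as a summand on the right with a multi-index extended by the index $i$.

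For the second step, $C_I$ is $0$-completely smooth. Since $c_I \in \Bc(\Hs)$ is a constant with respect to the $G$-action (i.e.\ independent of $g$), it commutes with each $\partial_j$ acting on $\Ac_\infty$: $\partial_j(c_I \cdot a) = c_I\cdot \partial_j(a)$ (iterated over any multi-index $J$). Therefore,
\[
\norm{C_I\tsr 1_{\mat_N}(\xi)}_n \;=\; \sum_{|J|\leq n}\norm{c_I\cdot (\partial_J\tsr 1_{\mat_N})(\xi)} \;\leq\; \norm{c_I}\cdot\norm{\xi}_n.
\]
Putting the two estimates together, for each monomial of degree $k\leq p$, Lemma~3 gives
\[
\norm{C_I\partial_{i_1}\cdots\partial_{i_k}\tsr 1_{\mat_N}(\xi)}_n \;\leq\; \norm{c_I}\cdot\norm{\xi}_{n+k}\;\leq\;\norm{c_I}\cdot\norm{\xi}_{n+p},
\]
and summing over $|I|\leq p$ bounds $\norm{\Phi\tsr 1_{\mat_N}(\xi)}_n$ by $C\norm{\xi}_{n+p}$ with $C=\sum_{|I|\leq p}\norm{c_I}$.

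The only potential obstacle is ensuring that multiplication by $c_I\in\Bc(\Hs)$ makes sense as a map $\Ac_\infty\to \Ac_\infty$ and commutes cleanly with the $\partial_j$. This is built into the hypothesis that $\Phi$ defines a map $\Ac_\infty\to \Ac_\infty$: since $c_I$ does not depend on $g\in G$, the extended action of $G$ on $\Bc(\Hs)$ (or on the relevant multiplier space) fixes $c_I$, so the Leibniz rule collapses to $\partial_j(c_I\cdot a)=c_I\cdot\partial_j(a)$ and the bound in step two holds term-by-term. If one wanted to allow $G$-varying coefficients $c_I\in\Bc(\Hs)_\infty$, the same argument goes through with the $0$-completely smooth constant $\norm{c_I}$ replaced by a fixed Sobolev-type norm $\norm{c_I}_q$ and a shift $n\mapsto n+q$, still giving complete smoothness of $\Phi$ of order $p+q$.
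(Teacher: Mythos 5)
Your proof is correct and takes essentially the same approach as the paper's: expand $\Phi$ into monomials, observe that each $\partial_i$ raises the Sobolev index by one while multiplication by a coefficient in $\Bc(\Hs)$ leaves it fixed, and sum. The paper's proof collapses this into a single estimate on each monomial (pulling out a uniform bound $\alpha$ on the coefficient norms and absorbing the degree-$p$ string of derivations into $\norm{\xi}_{n+p}$), whereas you decompose more finely into the building blocks $\partial_i$ ($1$-completely smooth) and $C_I$ ($0$-completely smooth) and invoke the preceding composition lemma to reassemble; the two routes prove the same estimate. Your version does buy one thing: it surfaces explicitly the assumption that the coefficients $c_I$ commute with the derivations $\partial_j$ (equivalently, are $G$-invariant in the sense $\partial_j(c_I)=0$), which the paper's proof uses silently when it pulls $\alpha$ outside and reindexes the derivation strings. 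That assumption is satisfied in the paper's applications (scalar or $G$-invariant coefficients, as for $\laplace^{K/H}=-\sum X_i^2$ and the Casimir operators), and your closing remark about the $G$-varying case via $\norm{c_I}_q$ is a correct extension.
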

 
\begin{proof} Set $\alpha$ as the norm of the largest coefficient of $\Phi$, wlog assume $\alpha \geq 1$. For any monomial $\Phi_i$ in $\Phi$, with $\xi=\sum_{[q]}x_l\tsr m_l, \Phi=\Phi[X_i:i\in[m]]$,\aln{
\norm{\Phi_i\tsr 1 \xi}_n &= \sum_{i_1\dots i_k, k\leq n}\norm{\left(\prod_{j\in [k]}X_{i_j}\tsr 1\right)\sum_{[q]}\Phi_i(x_l)\tsr m_l}\\
&\leq \alpha \sum_{i_1\dots i_k, k\leq  n+p}\norm{\left(\prod_{j\in [k]}X_{i_j}\tsr 1\right)\sum_{[q]}x_l\tsr m_l} = \alpha\norm{\Phi_i\tsr 1 \xi}_{n+p}
}
This yields $ \norm{\Phi\tsr 1 \xi}_n \leq N \alpha\norm{\xi}_{n+p}
$
where $\Phi$ has $N$ monomials.
\end{proof}
 
\begin{theorem}\label{existence_of_evans_hudson_dilation}(Existence of Evans-Hudson dilation\cite[Thm~8.1.38]{sinha_qsp}) If $(T_t)$ is a conservative quantum dynamical semigroup on a unital \CStar-algebra $\Ac$, covariant with respect to action of a second countable compact Lie group $G$,  with possibly unbounded generator $\Lc$ that is  $p$-completely smooth for some $p$ and $L(\Ac_\infty) \subset \Ac_\infty\subset\Dom(\Lc)$, then the Evans-Hudson dilation exists . 
\end{theorem}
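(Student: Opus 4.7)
The plan is to reduce the statement to the existence theorem \cite[Thm~8.1.38]{sinha_qsp} by (i) extracting structure maps for $\Lc$ on $\Ac_\infty$, (ii) verifying that those maps inherit complete smoothness from $\Lc$, and (iii) running the Picard iteration for the Evans--Hudson qsde in the Frechet topology induced by the Sobolev-type norms $\norm{\cdot}_n$ rather than in $C^*$-norm. For the first step, using the conservativity, complete positivity, and $G$-covariance of $(T_t)$, one extracts a Christensen--Evans type decomposition of $\Lc$ on $\Ac_\infty$: there exist a Hilbert space $\Ks$, operators $L_k$ and a self-adjoint $H$ with
\[
\Lc(a) = \I[H, a] + \sum_k \Big( L_k^*\, (a\tsr 1)\, L_k - \tfrac12 \{L_k^* L_k,\, a\} \Big), \qquad a \in \Ac_\infty,
\]
where the family $\{L_k\}$ carries a unitary $G$-representation. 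Reading off the Evans--Hudson structure maps $a_\delta, a_\delta^\dagger, \Lambda_\sigma, \one_\Lc$ as in \cite[\S~5.4]{sinha_qsp} then supplies the qsde coefficients.

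Second, I would verify complete smoothness of each of the four structure maps. The drift $\one_\Lc = \Lc$ is $p$-completely smooth by hypothesis. The $G$-covariance forces each $L_k$, and hence $a_\delta, a_\delta^\dagger$, to be expressible as a polynomial in the derivations $\partial_i$ associated to a basis of $\Lie{G}$, so Proposition~\ref{prop_poly_smooth} yields complete smoothness of some finite order. The conservation coefficient $\Lambda_\sigma(a) = \pi(a) - a\tsr \one$, for $\pi$ a $*$-homomorphism of $\Ac$, is bounded on $\Ac$ and therefore trivially completely smooth. Together, all structure maps are $q$-completely smooth for some $q \geq p$.

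Third, I would run the Picard iteration $J_t^{(0)}(a \tsr \ExpVec{u}) = (a\tsr \one)\ExpVec{u}$ and
\[
J_t^{(n+1)} = J^{(0)} + \int_0^t J_s^{(n)}\big(a_\delta(ds) + a_\delta^\dagger(ds) + \Lambda_\sigma(ds) + \one_\Lc(ds)\big),
\]
and use the complete smoothness estimates together with the standard quantum Ito bounds on exponential-vector domains to obtain convergence of $J_t^{(n)}$ in the Frechet topology on $\Ac_\infty$, uniformly on $\ExpVec{\Hs_t}$. The arguments of \cite[Ch~8]{sinha_qsp} then show the limit $j_t$ is regular, adapted, a $*$-homomorphism extending continuously from $\Ac_\infty$ to $\Ac$, and satisfies the dilation identity $\ip{v\ExpVec{0}, j_t(a) u\ExpVec{0}} = \ip{v, T_t(a) u}$.

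The main obstacle is the unboundedness of $\Lc$, which prevents iteration of the qsde in $C^*$-norm. Complete smoothness is precisely the required substitute: the estimate $\norm{\Lc \tsr 1_N(\xi)}_n \leq C \norm{\xi}_{n+p}$ lets each Picard step cost $p$ units of smoothness index, and iterates still converge because $\Ac_\infty = \bigcap_k \Dom(\partial^k)$ is complete under the family $\{\norm{\cdot}_n\}$, so convergence in every $\norm{\cdot}_n$ suffices. One trades loss of regularity for Frechet convergence, much as Sobolev estimates do for quasi-linear PDE.
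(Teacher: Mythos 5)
The paper does not prove this theorem; it is quoted verbatim as \cite[Thm~8.1.38]{sinha_qsp}, so there is no in-paper argument to compare against. Judging your proposal on its own terms: the overall skeleton --- (i) a Christensen--Evans/Lindblad-type decomposition of $\Lc$ on $\Ac_\infty$, (ii) complete smoothness of the resulting structure maps, (iii) Picard iteration of the Evans--Hudson qsde in the Fr\'echet topology generated by the Sobolev-type norms $\norm{\cdot}_n$, trading regularity index per iteration step --- is indeed the correct shape of the Goswami--Sinha argument, and your closing remarks about why complete smoothness substitutes for boundedness are accurate.

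There is, however, a genuine gap in step (ii). You assert that $G$-covariance of $\Lc$ ``forces each $L_k$, and hence $a_\delta, a_\delta^\dagger$, to be expressible as a polynomial in the derivations $\partial_i$,'' and then invoke Proposition~\ref{prop_poly_smooth}. That implication is false in general. Covariance $\Lc\circ\alpha_g = \alpha_g\circ\Lc$ constrains how the family $\{L_k\}$ transforms under $G$ (roughly, $\alpha_g$ permutes the $L_k$ through a unitary cocycle on the multiplier Hilbert space), but it does not render the $L_k$ polynomial in the infinitesimal generators $\partial_i$ of the $G$-action. Proposition~\ref{prop_poly_smooth} gives a \emph{sufficient} condition for complete smoothness --- being a Lie-algebra polynomial --- not a description of all covariant completely smooth maps, and nothing in the hypotheses of the theorem places $\Lc$ or its pieces in that polynomial class. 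The correct route (and the one the cited reference takes) is to extract the structure maps via the Kasparov-type / completely positive splitting of $\Lc$, average over the compact group $G$ to arrange covariance of the resulting maps, and then \emph{estimate} their $\norm{\cdot}_n$-norms directly in terms of $\norm{\Lc(\cdot)}_n$, so that the completely-smoothness constant of $\Lc$ propagates; no polynomial structure in the $\partial_i$ is used or available. Your treatment of $\Lambda_\sigma$ as ``bounded, hence trivially completely smooth'' has the same issue: boundedness on $\Ac$ does not by itself give $\pi(\Ac_\infty)\subset\Ac_\infty$ with $\norm{\cdot}_n$-estimates; this too comes from covariance of $\pi$ plus an averaging construction, not from boundedness alone. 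With step (ii) corrected, steps (i) and (iii) carry through as you describe.
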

 
By theorem~\ref{existence_of_evans_hudson_dilation}, the existence of Evans-Hudson dilation requires that the semigroup be conservative. As remarked before, this does not hold for the semigroups $e^{-tL}$, $L=\laplace, \dlaplace$.  While the commutation of the generator with the Lie group action and complete smoothness are tied to the Lie algebra structure. For homogeneous spaces, and to some extent, more generally, we work towards verifying -- and working around, the hypothesis needed.

\subsection{The endomorphism connection}

In requiring the semigroups to be conservative, one quickly notes that 
the laplacian $\laplace^H$ cannot be generate a conservative semigroup if it acts by composition on $\End(L^2(X, H))$
: fix a basis $(e_i)$ of eigensections of $\laplace^H$ for $L^2(H)$ and let $\lambda_i$ be eigenvalue for $\laplace_H$ on $e_i$, then $(e_i\tsr e_j^*)_{i,j}$ is a basis for  $\End(H)$. If $\laplace^H$ acts by composition on $\End(H)$ then it maps $e_i\tsr e_j^*$ to $\lambda_i e_i\tsr e_j^*$, but then $\laplace^H(1) = \laplace^H(\sum_ie_i\tsr e_i^*)= 0$ cannot hold. However, we have the following --

\begin{observation}\label{obv_identification_inside_endomorphism_bundle}
On the endomorphism bundle, the canonical connection $\grad^{\End(H)} = \grad^H\tsr 1 + 1\tsr \grad^{H^*}$ is easily seen define a conservative semigroup and is uniquely determined from $\grad^H$: if over $(U, \phi_U)$ the connection acts locally by $\grad(\sum \sigma^j\mu_j)=\sum_j(d\sigma^j)\mu_j + \sum_j\sigma^jA\mu_j$ for a matrix of $T^*M$-valued 1-forms $A$, 
then the dual connection acts with matrix  $\hat A:=-A^t$, 
and $\grad^{\End(H)}(\sum_{ij}\sigma^i_j\mu_i\tsr \mu^j)$ is given by \begin{align*}
\grad^{\End(H)}\sum_{ij}\sigma^i_j\mu_i\tsr \mu^j 
&= \sum_{ij}(d\sigma_j^i)\mu_i\tsr \mu^j +
\sum_{jk}[\sigma A - A\sigma]_{jk}\mu_k\tsr \mu^j
\end{align*}
Additionally, from previous computations, $\laplace^{\End(H)} = \laplace^{H}\tsr 1 + 2\sum_i\grad^H_i\tsr\grad^{H^*}_i +  1\tsr \laplace^{H^*}$, and therefore if $u^{H^*}\in\ker(\laplace^{H^*})$ is covariantly constant and non-zero, then  $\laplace^{\End(H)}(\phi\tsr u^{H^*}) = (\laplace^{H}\phi)\tsr u^{H^*}$, meaning $e^{-t\laplace^H}(\phi)$ on $H$ can be identified with  $e^{-t\laplace^{\End(H)}}(\phi\tsr u^{H^*})$ on $\End(H)$.
 
In terms of the acting on $\phi\tsr \psi$ this gives that $e^{-t\laplace^H}$ acting by composition on $\phi\tsr \psi$ i.e., $e^{-t\laplace^H}$ maps $\phi\tsr \psi$ to $(e^{-t\laplace^H}\phi)\tsr \psi$, can be identified with $U_{\psi} \circ e^{-t\laplace^{\End H}}(\phi\tsr u^{H^*})$, where $U_\psi$ is a change of basis for $H^*$ sending $u^{H^*}$ to $\psi$. Note all computations are in this basis and the identification between basis for $H^*$ and $H$ is no longer canonical. The existence of covariantly constant sections relates to holonomy. For homogeneous spaces with canonical connection such sections can be induced by using that the torsion and curvature tensors are covariant constant.    
\end{observation}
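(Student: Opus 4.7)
The plan is to work fibrewise and in a local frame, exploiting the identification $\End(H)\isomorph H\tsr H^*$. The canonical connection on a tensor-product bundle is forced by the Leibniz rule to be $\grad^H\tsr 1 + 1\tsr \grad^{H^*}$, and the dual connection $\grad^{H^*}$ is uniquely pinned down by requiring the evaluation pairing $H\tsr H^*\to \underline{\RR}$ to be parallel with respect to the trivial connection $d$. In a local frame $(\mu_i)$ of $H$ with dual coframe $(\mu^j)$, writing $\grad^H$ via its connection $1$-form matrix $A$ and imposing $d\,\ip{\mu^i,\mu_j}=0$ via the Leibniz rule forces $\grad^{H^*}$ to have matrix $\hat A:=-A^t$, thereby recovering the stated local formula for $\grad^{\End(H)}$.

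For conservativity, observe that the identity endomorphism is the tautological section $\one=\sum_i \mu_i\tsr\mu^i$. A direct calculation using $\hat A=-A^t$ shows that the two Leibniz summands cancel, so $\one$ is globally parallel; hence $\laplace^{\End(H)}\one = \grad^{*}\grad\,\one = 0$, which yields conservativity of the heat semigroup $e^{-t\laplace^{\End(H)}}$.

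For the core identification $\laplace^{\End(H)}(\phi\tsr u^{H^*})=(\laplace^H\phi)\tsr u^{H^*}$, I would apply equation~(\ref{eq_tensor_connection_laplacian}) with $E\mapsto H^*$. The hypothesis $\grad^{H^*}u^{H^*}=0$ simultaneously annihilates the cross term $2\sum_i\grad^H_i\phi\tsr \grad^{H^*}_iu^{H^*}$ and the pure term $\phi\tsr \laplace^{H^*}u^{H^*}$, leaving only $(\laplace^H\phi)\tsr u^{H^*}$. Consequently, $t\mapsto(e^{-t\laplace^H}\phi)\tsr u^{H^*}$ satisfies the heat equation for $\laplace^{\End(H)}$ with initial datum $\phi\tsr u^{H^*}$; by uniqueness of the heat semigroup on a compact manifold it agrees with $e^{-t\laplace^{\End(H)}}(\phi\tsr u^{H^*})$. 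The change of frame $U_\psi:H^*\to H^*$ sending $u^{H^*}$ to an arbitrary target section $\psi$ then transports the identification to $\phi\tsr\psi$.

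The main obstacle is the existence of a nowhere-vanishing covariantly constant section $u^{H^*}$ of $H^*$. By the holonomy principle this is equivalent to the holonomy representation of $\grad^{H^*}$ fixing a nonzero vector in a typical fibre, a strong condition that fails on a generic Riemannian manifold and is the source of the loss of canonicity flagged by the author. For reductive homogeneous spaces $K/H$ with canonical connection, however, covariantly constant sections of an associated bundle $K\times_H V$ correspond bijectively to $H$-invariant vectors in $V$, so the question reduces to purely representation-theoretic data on the isotropy subgroup; covariant constancy of the torsion and curvature of the canonical connection then ensures that this representation-theoretic route is consistent with the spin geometric data, which is what the subsequent section must exploit.
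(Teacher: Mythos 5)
Your proposal is correct and follows the same essential line as the paper's observation. The tensor-product connection forced by the Leibniz rule, the dual connection pinned down by compatibility with the evaluation pairing (yielding $\hat A=-A^t$), parallelity of $\one=\sum_i\mu_i\tsr\mu^i$ giving conservativity, and specialization of eq.~\eqref{eq_tensor_connection_laplacian} with $E=H^*$ combined with $\grad^{H^*}u^{H^*}=0$ to kill the cross and pure terms --- all of these match the computations the paper has in mind, and the uniqueness-of-heat-flow argument is the standard way to upgrade the pointwise identity $\laplace^{\End(H)}(\phi\tsr u^{H^*})=(\laplace^H\phi)\tsr u^{H^*}$ to the semigroup identification.

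One place where you genuinely sharpen what the paper leaves vague: for the existence of the covariantly constant section $u^{H^*}$, the paper only says such sections ``can be induced by using that the torsion and curvature tensors are covariant constant,'' which produces parallel \emph{tensors} but does not directly yield a parallel section of $H^*$. Your reduction --- parallel sections of an associated bundle $K\times_H V$ with the canonical connection correspond to $H$-invariant (equivalently, holonomy-invariant) vectors in $V$ --- is the clean representation-theoretic criterion via the holonomy principle, and makes the existence question checkable on isotropy data. This is the form of the statement the later homogeneous-space arguments actually need, so it is worth stating explicitly as you do.
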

 
If $E$ is a hermitian (or euclidean) vector bundle with connection $\grad^E$ and $H$ a Dirac bundle with connection $\grad^H$ over $X$, then the $\phi\cAct (h\tsr e)\to (\phi\cAct h)\tsr e$ for $\phi\in \ClifB(X)$  defines a Clifford action on $H\tsr E$. The skew hermiticity of the action is obvious and as needed the tensor product connection $\grad^{H\tsr E}$ satisfies \aln{
\grad^{H\tsr E}(\phi \cAct (\sigma \tsr e)) 
&= (\grad^{\ClifB(X)}\phi)\cAct (\sigma\tsr e) + \phi\cAct\grad^{H\tsr E}\sigma\tsr e
}
 
 
Now the Dirac and Clifford structures are local as the Clifford multiplication acts on fibres and the connections can be computed in a chart. 
The local structures can then be glued to get the global structure. As a special case of tensor product bundles, consider $\End(H)$ for a Dirac bundle $H$. Suppose local sections $\mu_i:i\in[\dim H])$ form an orthonormal basis of $H$ in chart $(U, \phi_U)$, and the corresponding dual basis $(\mu^i)$ for $H^*$. Over the $U$, $\End\big|_{U}(H)$ is just the bundle $H\big|_{U}\tsr H^*\big|_{U}$ with the fibres given by $\Linspan{e_i\tsr e^j:i,j\in [\dim H]}$. This yields: 
if $H$ is a Dirac bundle, then $\End(H), H\tsr E$ are Dirac bundles as well.
Relevantly, there's the following observation -- 
 
\begin{prop} Semigroups generated by laplacians $\dlaplace_{\End{H}}, \laplace$ for the endomorphism connection are conservative. 
\end{prop}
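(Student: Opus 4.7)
The plan is to reduce the conservativeness statement to showing that the identity section $\mathbf{1}\in\MSect(\End(H))$ is annihilated by both Laplacians, which in turn reduces to the covariant constancy $\grad^{\End(H)}(\mathbf{1})=0$. Once we know $L(\mathbf{1})=0$ for $L\in\{\laplace^{\End(H)},\dlaplace^{\End(H)}\}$, the heat equation $\tfrac{d}{dt}T_t(\mathbf{1})=-L(T_t\mathbf{1})$ with initial condition $T_0\mathbf{1}=\mathbf{1}$ forces $T_t(\mathbf{1})=\mathbf{1}$ for all $t\geq0$, which is conservativeness.

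The covariant constancy of $\mathbf{1}$ is a purely local verification using the formula displayed in Observation~\ref{obv_identification_inside_endomorphism_bundle}. Fix a local orthonormal frame $(\mu_i)$ of $H$ over a chart $(U,\phi_U)$ with dual frame $(\mu^j)$ for $H^*$. In this basis, the identity endomorphism is the global section that reads locally as $\mathbf{1}=\sum_i \mu_i\tsr\mu^i$, i.e.\ with coefficient matrix $\sigma^i_j=\delta^i_j$. Plugging $\sigma=I$ into the local formula
\[
\grad^{\End(H)}\Big(\sum_{ij}\sigma^i_j\,\mu_i\tsr\mu^j\Big)=\sum_{ij}(d\sigma^i_j)\,\mu_i\tsr\mu^j+\sum_{jk}[\sigma A-A\sigma]_{jk}\,\mu_k\tsr\mu^j
\]
gives $d(I)=0$ for the first term and $[I,A]=0$ for the second, hence $\grad^{\End(H)}(\mathbf{1})=0$ on $U$. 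Since $\mathbf{1}$ is a globally defined section and covariant constancy is a local property, this holds globally.

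From $\grad^{\End(H)}(\mathbf{1})=0$, the conclusion for the connection Laplacian is immediate: $\laplace^{\End(H)}(\mathbf{1})=(\grad^{\End(H)})^{*}\grad^{\End(H)}(\mathbf{1})=0$. For the Dirac Laplacian, factor the Dirac operator through the Clifford action, $\diracop_{\End(H)}=c\circ\grad^{\End(H)}$; then $\diracop_{\End(H)}(\mathbf{1})=c(0)=0$, hence $\dlaplace_{\End(H)}(\mathbf{1})=\diracop_{\End(H)}\bigl(\diracop_{\End(H)}\mathbf{1}\bigr)=0$. (As a by-product, Bochner–Weitzenböck yields $\curveOp^{\End(H)}(\mathbf{1})=\dlaplace_{\End(H)}\mathbf{1}-\laplace^{\End(H)}\mathbf{1}=0$, consistent with $\mathbf{1}$ lying in the kernel of curvature commutators.)

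The one point to be careful with is the interpretation of ``$\mathbf{1}$'' at the operator-algebra level: we need that the global section $\mathbf{1}\in\MSect(\End(H))$ corresponds, under the identification used to define $T_t$ on the relevant subalgebra of $\Bc(L^2(H))$, to the unit that must be preserved by a conservative semigroup. This is precisely the identification underlying the earlier extension results (Theorem~\ref{thm_extending_from_hilbert_schmitt} and its corollary), where the operator system $\Oc(L^2(\Ac,\tau),\mathbf{1})$ and the extension to $\Bc(L^2(H))$ are built using this same $\mathbf{1}$; so no new identification is required. The expected obstacle is only notational, namely keeping straight that the Laplacians act on sections of $\End(H)$ (viewed as $H\tsr H^*$) rather than by left composition on $\End(L^2(H))$, because, as noted just above the proposition, the latter action cannot be conservative. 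With that caveat in place, the proof is a two-line computation after the covariant-constancy lemma.
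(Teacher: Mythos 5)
Your proof takes essentially the same route as the paper: both reduce to verifying $\grad^{\End(H)}(\mathbf{1})=0$ via the local coordinate formula from Observation~\ref{obv_identification_inside_endomorphism_bundle} (with $\sigma=I$ so $dI=0$ and $[I,A]=0$), then conclude $\laplace(\mathbf{1})=0$ and $\dlaplace(\mathbf{1})=0$. Your write-up simply spells out the intermediate steps that the paper compresses.
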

\begin{proof} As $1=\sum_i \mu_i\tsr \mu^i$, $\grad(1)$ vanishes identically over $U$, and therefore, $\laplace(1) = 0$. This implies $\dlaplace(1) = 0$ as well.
\end{proof}
 
\begin{example}
The Clifford bundle can be viewed as the endomorphism bundle of the spinor bundle. The Clifford connection is naturally an endomorphism connection. It is a derivation on sections of the bundle, and, therefore, is zero on the identity element of the Clifford bundle.
\end{example}

\subsection{The canonical connection laplacian}\label{sec_canonical_connection}
 
Suppose the homogeneous space $M=K/H$ for compact, connected, Lie group $K$, closed Lie subgroup $H\subset K$ is reductive with $\Lie{K}=\Lie{H}\oplus \mathfrak{M}$ as a vectorspace for an $\txts{Ad}(H)$ invariant subspace $\mathfrak{M}$. $\mathfrak{M}$ is identified with $T_oM$ where $o=eH$ in the coset manifold $K/H$. The homogeneous space $K/H$ is principal $H$-bundle, $\pi: K\to K/H$ and carries a $K$ action. Note that if the $K$ acts effectively on reductive homogeneous space $K/H$ then $H$ is isomorphic to a subgroup of $\txt{GL}(\dim M, \RR)$, and the fiber bundle $\pi:K\to K/H$ is isomorphic to a sub-bundle of the principal frame bundle $F(M, \txt{GL}(\dim M, \RR))$. The $K$ action is assumed to be effective. We will consider homogeneous vector bundles over $K/H$, that is, a vector bundle $E\to K/H$ is such that $K$ acts on $E$, with $gE_x=E_{gx}$, and the action $g:E_x\to E_{gx}$ is an isomorphism for all $g\in K, x\in K/H$. $H$ induces automorphism at each fiber, meaning the fibers carry a representation of $E$.

Suppose additionally that $K$ is semisimple, so the Killing form $B_K$ defines a positive definite Riemannian metric on $K$ and an inner product on $\Lie{K}$ by $-B_K$  such that the reductive decomposition for $K/H$ satisfies $\mathfrak{M}=\Lie{H}^\perp$ with respect to $-B_K$. By left invariance of the Killing form, the inner product on $\Lie{G}$ extends to a Riemannian metric on $M=K/H$. Since the Lie group $K$ is compact and connected, the Lie algebra exponential agrees with the Riemannian exponential and is surjective. This means that Casimir laplacian commutes with action of both Lie group and the Lie algebra.
 
The connections of interest are invariant connections, where by the invariance of a connection under a diffeomorphism, $g:K/H\to K/H$ means $\grad_{g_*X}(g_*Y)=g_*(\grad_XY)$. There's a unique $K$-invariant connection in $K$ such that if $f_t=\exp(tX)$ be the 1-parameter subgroup of $K$ corresponding to $X\in \mathfrak{M}$ with a natural lift of $o$ to $u_o$ in the principal bundle, then the orbit of $\tilde f(u_o)$ is horizontal. More intuitively, connection $1$-form for the canonical connection is projection onto the $\Lie{H}$; the horizontal distribution is obtained at $o$ by translating $\mathfrak{M}$ by the left $K$-action.

The canonical connection\footnote{Referred to as canonical connection of second kind} is a metric connection, but is not necessarily torsion free, instead the torsion and curvature are parallel. The canonical connection, therefore, does not agree with the Levi-Civita connection unless $M$ is a symmetric homogeneous spaces.
Following \cites{meng_quantum_hall}{camporesi_homogeneous_spaces}\footnote{also, see \cite[\S~3.5]{friedrich_dirac} for the Dirac laplacian on Riemannian symmetric spaces} , we note the expressibility of the laplacian for the canonical connection in terms of the Casimir laplacian of $K$ and $H$ and as a Lie algebra action.
From this the complete smoothness of the canonical connection laplacian becomes evident.
\begin{lemma}\label{lem_canonical_conn_completely_smooth} The canonical connection laplacian for homogeneous vector bundle $E$ over $K/H$ is expressible in terms of Lie algebra action and is completely smooth.\end{lemma}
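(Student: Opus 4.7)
The plan is to express $\laplace^E$ explicitly as a second-order polynomial in a basis of $\Lie{K}$ acting by the left-invariant vector fields, and then to apply Proposition~\ref{prop_poly_smooth}.

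The first step is the identification of sections. A homogeneous vector bundle $E \to K/H$ is associated to a representation $\rho : H \to \GL(V)$, and $\SmoothSect{E}$ corresponds to the space of $H$-equivariant smooth functions $f : K \to V$, i.e.\ $f(kh) = \rho(h)^{-1}f(k)$ for $h \in H$. Under this identification the canonical connection acts by $(\grad_{X}^{E} f)(k) = (X^{L} f)(k)$ for $X \in \mathfrak{M}$, where $X^{L}$ is the left-invariant vector field corresponding to $X$. Fixing an $\Ad(H)$-invariant orthonormal basis $(X_i)_{i \in [\dim \mathfrak{M}]}$ of $\mathfrak{M}$ (with respect to $-B_K$) and an orthonormal basis $(Y_j)_{j \in [\dim \Lie{H}]}$ of $\Lie{H}$, the geodesic frame at $o \in K/H$ is given by the $X_i$'s and the canonical connection laplacian becomes
\begin{equation*}
\laplace^{E} f = -\sum_{i} X_i^{L} X_i^{L} f.
\end{equation*}

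The second step is to replace the sum over $\mathfrak{M}$ by a Casimir expression. The quadratic Casimir $C_K = -\sum_i X_i^{L}X_i^{L} - \sum_j Y_j^{L}Y_j^{L}$ of $K$, acting by left-invariant differential operators, commutes with left and right translations. Since the $Y_j$'s generate the right $H$-action and sections are $H$-equivariant with infinitesimal action $\rho_*$, on equivariant $f$ one has $Y_j^{L} f = -\rho_*(Y_j) f$, so $\sum_j Y_j^{L}Y_j^{L} f = \rho_*(C_H) f$, where $C_H$ denotes the Casimir of $\Lie{H}$ in $\rho$. Thus
\begin{equation*}
\laplace^{E} = C_K - \rho_*(C_H),
\end{equation*}
where both summands act by left-invariant differential operators of degree $\leq 2$ on $\Lie{K}$, the second being of degree $0$ (i.e.\ a bounded bundle endomorphism; on each irreducible $H$-isotype it is a scalar by Schur's lemma).

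The third step is complete smoothness. Extending the basis $(X_i)$ of $\mathfrak{M}$ by $(Y_j)$ of $\Lie{H}$ gives a basis of $\Lie{K}$, and the action of these generators on $\Ac_\infty \subset C(K/H, \End(E))$ through the one-parameter subgroups $\alpha_{t\chi}$ coincides with the Lie algebra action used to define $\laplace^{E}$. Hence $\laplace^{E}$ is a polynomial of degree $2$ in the basis of $\Lie{K}$ with coefficients in $\Bc(\Hs)$, so Proposition~\ref{prop_poly_smooth} yields that $\laplace^{E}$ is $2$-completely smooth. The main technical obstacle is the identification of the connection-theoretic and Lie-algebraic actions together with the correct sign/normalization bookkeeping between the Killing metric, the Casimir, and the geodesic-frame formula~\eqref{eq_laplacian_geodesic_coords}; once these match, the complete smoothness is an immediate consequence of Proposition~\ref{prop_poly_smooth}.
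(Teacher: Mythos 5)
Your proof is correct and takes essentially the same route as the paper: express $\laplace^{E}=-\sum_i X_i^2$ as a degree-two polynomial in a basis of $\Lie{K}$ acting on sections and then invoke Proposition~\ref{prop_poly_smooth}. You reach this via the identification of $\MSect(E)$ with $H$-equivariant $V$-valued functions on $K$ and also spell out the Casimir decomposition, whereas the paper works in a geodesic frame at $o$, uses that the canonical connection $1$-form vanishes on $\mathfrak{M}$, and defers the Casimir expression to the following proposition; the two derivations are equivalent.
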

\begin{proof}

Take an orthonormal basis $(Y_i)$ for $\Lie{K}$ at $e\in K$ which contains an orthonormal basis $(X_i)$ for $\mathfrak{M}$.  As $K$ is compact, the Riemannian exponential agrees with Lie exponential, so the basis defines geodesic local coordinates at $K$. On $K/H$, $(X_i)$'s define geodesic local coordinates on $K/H$. Therefore, $\laplace^{E} = \sum \grad_{X_i}\grad_{X_i}$. In any local frame $(e_j)$ for $E$ over $K/H$ about $o$, since the connection 1-form, being projection onto the $\Lie{H}$, vanishes on $X_i$'s, $$\grad_{X_i} \left(\sum_j\phi_j e_j\right) = \sum_j X_i(\phi_j)e_j$$ implying $\laplace^{K/H} = \sum X^2_i$ locally. By $K$ invariance of the connection, this holds everywhere. Complete smoothness follows from proposition~\ref{prop_poly_smooth}.
\end{proof}
 
\noindent Noting the representations of $K, H$ at play here gives the connection laplacian in terms of the Casimir operators of the Lie groups --
 
\begin{prop} For a homogeneous vector bundle $E$ over $K/H$ with the canonical connection, $$\laplace^{E} = -C_2(K, \MSect(E)) + C_2(H, E)
$$
where $C_2(K, \MSect(E))$ and $ C_2(H, E)$ are Casimir operators for $K$ and $H$, the representation for $K$ being the induced representation on $\MSect(E)$ and the representation of $H$, the representation on the fibers defining $E$. The action of $H$ on the sections is pointwise.
\end{prop}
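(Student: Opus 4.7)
The plan is to realize sections of $E$ as $H$-equivariant $V$-valued functions on $K$ and decompose the Casimir of $K$, acting via the induced representation on $\Gamma(E)$, according to the reductive splitting $\Lie{K}=\Lie{H}\oplus \mathfrak{M}$. Since lemma~\ref{lem_canonical_conn_completely_smooth} already expresses the canonical connection laplacian locally as $\sum X_i^2$ for an orthonormal basis $(X_i)$ of $\mathfrak{M}$, what remains is to relate the complementary $\Lie{H}$-directions to the fiberwise action of $\rho_\ast$.

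First, using the associated bundle construction $E = K\times_\rho V$, identify
\[
\Gamma(E)\isomorph C^\infty(K,V)_\rho := \{f\in C^\infty(K,V):f(kh)=\rho(h)^{-1}f(k)\text{ for all }h\in H\},
\]
so that the induced $K$-representation on $\Gamma(E)$ becomes left translation. The infinitesimal action of $Y\in\Lie{K}$ is then the left-invariant derivative $\tilde Y f(k)=\tfrac{d}{dt}\big|_{t=0}f(k\exp(tY))$ (up to the sign convention one chooses for the Casimir), and the Casimir operator on $\Gamma(E)$ is $C_2(K,\Gamma(E))=\sum_j\tilde Y_j^2$ for $(Y_j)$ a $(-B_K)$-orthonormal basis of $\Lie{K}$.

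Next, adapt the basis to the reductive splitting, writing it as $(H_\beta)_\beta\cup(X_\alpha)_\alpha$ with $(H_\beta)$ a basis of $\Lie{H}$ and $(X_\alpha)$ a basis of $\mathfrak{M}$. The Casimir then splits as $C_2(K,\Gamma(E))=\sum_\alpha\tilde X_\alpha^2+\sum_\beta\tilde H_\beta^2$. For the $\mathfrak{M}$-part, invoke the defining property of the canonical connection — horizontal curves at $o$ are precisely the one-parameter subgroups $\exp(tX)\cdot o$ with $X\in\mathfrak{M}$, so that $\nabla_X$ at $o$ corresponds to $\tilde X$ at $e$ on the equivariant function — combined with $K$-invariance to propagate this identification to every point. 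Lemma~\ref{lem_canonical_conn_completely_smooth} then gives $\sum_\alpha\tilde X_\alpha^2=-\laplace^E$ (under the sign convention $\laplace=\grad^*\grad\geq 0$). For the $\Lie{H}$-part, differentiate the equivariance relation: for $H_\beta\in\Lie{H}$,
\[
\tilde H_\beta f(k)=\tfrac{d}{dt}\big|_{t=0}f(k\exp(tH_\beta))=\tfrac{d}{dt}\big|_{t=0}\rho(\exp(-tH_\beta))f(k)=-\rho_\ast(H_\beta)f(k),
\]
so $\tilde H_\beta^2 f(k)=\rho_\ast(H_\beta)^2f(k)$, which is pointwise the Casimir $C_2(H,E)$ of $H$ acting through $\rho_\ast$ on the fiber $V$. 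Assembling the two pieces yields the identity $\laplace^E=-C_2(K,\Gamma(E))+C_2(H,E)$.

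The main subtleties I expect are bookkeeping rather than conceptual. First, sign conventions must be tracked consistently across (i) the positive convention $\laplace=\grad^*\grad$, (ii) the Casimir (sign depending on whether one writes $\sum Y^2$ or $-\sum Y^2$, and on the skew-adjointness of $Y$ acting in a unitary representation), and (iii) the infinitesimal $K$-action on $\Gamma(E)$, where left- versus right-translation conventions affect a sign that disappears only after squaring. Second, the identification of $\nabla_X$ with $\tilde X$ via the canonical connection must be verified globally and not just at $o$; the cleanest route is to establish it at $e$ from the characterization of the canonical connection by its horizontal distribution, then use that both sides of the claimed identity are $K$-invariant differential operators, so equality at one point extends by $K$-invariance to all of $K/H$.
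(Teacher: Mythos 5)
Your proof is correct and takes essentially the same approach as the paper's: the paper's (terse) argument likewise decomposes the Casimir of $K$ according to $\Lie{K}=\Lie{H}\oplus\mathfrak{M}$, identifies $\sum_\alpha X_\alpha^2$ with $-\laplace^E$ via the lemma, and asserts that the remaining $\Lie{H}$-directions yield the fiber Casimir of $H$. You have merely filled in the step the paper leaves implicit -- the identification $\Gamma(E)\isomorph C^\infty(K,V)_\rho$ and the differentiation of the equivariance condition giving $\tilde H_\beta f = -\rho_*(H_\beta)f$ -- which is exactly the right justification for that assertion.
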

 
\begin{proof} Proof is contained in lemma~\ref{lem_canonical_conn_completely_smooth}, since  $\laplace^{K/H} = -\sum X^2_i$  and $X_i$'s for a basis for $\mathfrak{M}$ while the remaining $(Y_i)$'s give the Casimir operator for $H$.      
\end{proof}
 
If the representation of $H$ is irreducible, then $C_2(H, E)$ is multiplication by a constant. If not, then on each fibre it's a bounded operator, and therefore, on $\MSect(E)$ it's a bounded operator as $E$ is finite dimensional, and $K/H$ compact.  This construction can be pushed to the endomorphism bundle.
 
\begin{observation} If $\phi_g$ is the identification between fibers $E_x, E_{gx}$ then coupling the duality between the fibers $E^*_y, E_y$ with $\phi_g$ gives the identification between $E^*_x, E_{gx}^*$. Therefore, the dual bundle, $E^*$, and similarly the endomorphism bundle $E\tsr E^*$, are also homogeneous.  $E^*, E\tsr E^*$ are defined by $\rho_E^*, \rho_E\tsr \rho^*_E$ where $\rho_E:H\to \End(E)$ is representation defining $E$ and $\rho_E^*$ the dual representation. $G$ has an induced representation on $\MSect(E\tsr E^*)$, so the canonical connection laplacian on $E\tsr E^*$ can be expressed similarly -- 
$$\laplace^{E\tsr E^*} = -C_2(K, \MSect(E\tsr E^*)) + C_2(H, E\tsr E^*)
$$
Since $C_2(H, E\tsr E^*)$ is a bounded operator, and $\laplace^{E\tsr E^*}$ is a polynomial in Lie algebra action and, therefore, completely smooth.
\end{observation}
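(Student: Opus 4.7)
The plan is to unpack the observation into its two main assertions: (i) that $E\tsr E^*$ is a homogeneous bundle so the preceding proposition applies, and (ii) that the resulting expression manifests the canonical connection laplacian on $E\tsr E^*$ as a polynomial in the Lie algebra action, hence completely smooth.

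First I would construct the $K$-action on $E^*$ explicitly. Given the identification $\phi_g\colon E_x\to E_{gx}$ defining the homogeneous structure on $E$, the dual map $\phi_g^*\colon E^*_{gx}\to E^*_x$ is a linear isomorphism, and its inverse $(\phi_g^*)^{-1}\colon E^*_x\to E^*_{gx}$ provides the required fiberwise isomorphism for $E^*$. One checks associativity and equivariance directly from the corresponding properties of $\phi_g$, establishing that $E^*$ is homogeneous with the dual representation $\rho_E^*\colon H\to \End(E^*)$. The tensor product $E\tsr E^*$ is then homogeneous via the natural action $\phi_g\tsr(\phi_g^*)^{-1}$, defined by the representation $\rho_E\tsr\rho_E^*$ of $H$ on the fibers.

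Next I would invoke the preceding proposition, applied to the homogeneous vector bundle $E\tsr E^*$ over $K/H$ equipped with the canonical connection, to obtain
\begin{equation*}
\laplace^{E\tsr E^*} = -C_2(K,\MSect(E\tsr E^*)) + C_2(H, E\tsr E^*),
\end{equation*}
where the $K$-representation is the induced action on sections and the $H$-representation is the pointwise fiberwise action $\rho_E\tsr\rho_E^*$. Because each fiber of $E\tsr E^*$ is finite-dimensional and $K/H$ is compact, $C_2(H, E\tsr E^*)$ acts as a bounded (globally uniformly bounded) endomorphism on $\MSect(E\tsr E^*)$.

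Finally, complete smoothness of $\laplace^{E\tsr E^*}$ follows from the earlier lemma~\ref{lem_canonical_conn_completely_smooth} together with proposition~\ref{prop_poly_smooth}: in a basis adapted to the reductive decomposition, $C_2(K,\MSect(E\tsr E^*))$ is a degree-two polynomial in the Lie algebra action, which proposition~\ref{prop_poly_smooth} shows is $2$-completely smooth, while the bounded zeroth-order term $C_2(H,E\tsr E^*)$ is trivially completely smooth. The main routine obstacle is simply verifying that the dualization and tensoring preserve the homogeneous bundle structure compatibly with the canonical connection, i.e.\ that the canonical connection on $E\tsr E^*$ as a homogeneous bundle coincides with the tensor-product of the canonical connection on $E$ and its dual on $E^*$; this however is automatic from the functoriality of the canonical connection under the standard tensor operations on representations of $H$.
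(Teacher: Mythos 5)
Your proposal is correct and takes essentially the same route as the paper: you unpack the observation by constructing the dual and tensor actions via $\phi_g^*$ and $\phi_g\tsr(\phi_g^*)^{-1}$, then apply the preceding proposition, lemma~\ref{lem_canonical_conn_completely_smooth}, and proposition~\ref{prop_poly_smooth} exactly as the paper does. The only thing you do beyond the paper's terse statement is flag the compatibility of the canonical connection with tensoring and dualizing, which the paper already invokes implicitly in observation~\ref{obv_identification_inside_endomorphism_bundle} and which is, as you say, standard for associated bundles with the canonical connection.
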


\subsection{Quantum stochastic dilation on homogeneous spinor bundles}
Throughout this section, $E, S$ are homogeneous vector bundle with the canonical connection on reductive homogeneous space $M:=K/H$, $K$ compact, $H$ closed. The metric is the bi-invariant metric from the Killing form. $S$ is a homogeneous Clifford module bundle, i.e. a homogeneous twisted spinor bundle. Note the compatibility of the Clifford action with the homogeneous structure of the bundle, from which the $K$-invariance of the curvature operator will follow --
 
\begin{lemma} The Clifford action $c:T^*M\to \End(S)$ commutes with the $K$-action. 
\end{lemma}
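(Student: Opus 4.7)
The plan is to reduce the claim to an $H$-equivariance statement at the base point $o := eH$ and then extend via the associated-bundle structure of $\Sc$ and $T^*M$. Since $S$ is a homogeneous twisted spinor bundle $\Wc \tsr \Sc$ with both factors $K$-homogeneous, the $K$-action on $S$ is tensorial and the Clifford action lives entirely on the $\Sc$ factor; thus it suffices to verify the claim for $\Sc$.

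Since the metric on $K/H$ arises from the $\Ad$-invariant form $-B_K\big|_\mathfrak{M}$, the isotropy representation $\Ad|_H : H \to \GL(\mathfrak{M})$ takes values in $O(\mathfrak{M})$, and hence its dual action on $T^*_oM \cong \mathfrak{M}^*$ is orthogonal. For the homogeneous spinor bundle $\Sc$ to be defined on $K/H$, this orthogonal representation factors through the spin cover as a homomorphism $\tilde\rho : H \to \Spin(T^*_oM)$, so that $\Sc = K \times_{\tilde\rho} \Delta$ for $\Delta$ the spin representation. The fiber-level Clifford action $c_o : T^*_oM \to \End(\Delta)$ is then $H$-equivariant,
$$\tilde\rho(h) \circ c_o(\xi) \circ \tilde\rho(h)^{-1} = c_o(h \cdot \xi), \qquad h \in H,\ \xi \in T^*_oM,$$
as an immediate consequence of the defining property of the spin representation (elements of $\Spin$ act on vectors by conjugation via the double cover to $SO$).

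Extending via the associated-bundle construction: for $x = gH$, a covector $\xi = [g, \xi_o] \in T^*_xM = K \times_H \mathfrak{M}^*$, and a spinor $\sigma = [g, \sigma_o] \in \Sc_x$, the bundle Clifford action is $c_x(\xi)\sigma = [g, c_o(\xi_o)\sigma_o]$, which is well-defined precisely because of the $H$-equivariance at $o$. For $k \in K$, one has $k_*\xi = [kg, \xi_o]$ and $k \cdot \sigma = [kg, \sigma_o]$, so
$$c_{kx}(k_*\xi)(k \cdot \sigma) = [kg, c_o(\xi_o)\sigma_o] = k \cdot (c_x(\xi)\sigma),$$
which is the asserted $K$-equivariance. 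Tensoring with the trivial Clifford action on $\Wc$ gives the statement for $S$. The only nontrivial ingredient is the existence of the spin lift $\tilde\rho$, which is the standard prerequisite for $\Sc$ to be defined as a homogeneous bundle on $K/H$; granted this, the proof is a formal manipulation of associated bundles.
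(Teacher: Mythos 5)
Your proof is correct and goes a genuinely different route from the paper's. The paper argues pointwise: it first shows that the left translation $\phi_g$ preserves the metric on $T^*M$ and hence, by the universal property of Clifford algebras, extends to an algebra isomorphism $\ClifA(T^*_xM)\to\ClifA(T^*_{gx}M)$; it then checks that the induced $K$-action on $\End(\Sc)$ is the expected conjugation action on rank-one endomorphisms $f_i\tsr f_j^*$. You instead work in the associated-bundle formalism: you reduce to the $\Sc$ factor, write $\Sc = K\times_{\tilde\rho}\Delta$ for a spin lift $\tilde\rho:H\to\Spin(T^*_oM)$, prove $H$-equivariance of the fiber Clifford action at $o$ from the defining conjugation property of $\Spin$, and then read off $K$-equivariance from how $K$ acts on equivalence classes $[g,\cdot]$. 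Both approaches share the reduction to the spinor factor. What yours buys is that it makes explicit both the prerequisite (existence of the spin lift, which is exactly what is needed for $\Sc$ to exist as a homogeneous bundle) and the precise mechanism by which fiber-level equivariance globalizes — the paper's second half leaves the compatibility of the Clifford module isomorphism $\Sc_x\cong\Sc_{gx}$ with the Clifford algebra isomorphism somewhat implicit. The paper's approach, on the other hand, is more elementary in that it never invokes the spin group or associated-bundle machinery, relying only on the universal property of Clifford algebras and an explicit basis computation.
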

\begin{proof} We first check that Clifford multiplication $\ClifA(T^*M)$ commutes with $K$-action so the Clifford action is well defined. Let $\phi_g$ be the vector space isomorphism given by the left action of $g\in K, \phi_g(s):=g\cdot s$ on $E$ between $E_x, E_{gx}$. By invariance of the metric an orthonormal basis of $(e_i)$ at $T^*_xM$ is mapped to an orthonormal basis under $\phi_g$ and this gives an identification ${\ClifA_{x}}$ between ${\ClifA_{gx}}$. Denoting Clifford multiplication at $x, gx$, by $\cdot_{\ClifA_{x}}, \cdot_{\ClifA_{gx}}$, since $\ip{\phi_g(u), \phi_g(v)}_{gx} = \ip{u, v}_x$,  $\phi_g:T^*_xM\to \ClifA(T^*_{gx}M)$ satisfies the universal property for Clifford algebras $\phi_g(u)\cdot_{\ClifA_{gx}} \phi_g(u) = \ip{u, u} 1_{\ClifA(T^*_{gx}M)}$, and therefore, $\phi_g$ extends to an algebra isomorphism $\phi_g:\ClifA(T^*_{x}M)\to \ClifA(T^*_{gx}M)$. This means  $\phi_g(u\cdot_{\ClifA_{x}} v) = \phi_g(u)\cdot_{\ClifA_{gx}} \phi(v)$ holds, and the Clifford multiplication commutes with the $K$-action.    
 
Now consider the Clifford action $c:T^*M\to \End(S)$. Since Clifford action on a twisted spinor $S:=\Sc\tsr \Wc$ bundle only acts on the spinor bundle piece $\Sc$, it suffices to verify that for any $v\in \Sc, \phi_g(\Psi(v)) = (\phi_g(\Psi))(\phi_g(v))$ where $(\phi_g(\Psi))$ is the action of $g$ on the $\Psi\in \End(\Sc)$. By linearity of $K$-action, wlog assume $\Psi=f_i\tsr f_j^*$ for a basis $(f_i)$ of $\Sc$, then $(\phi_g(\Psi)) = (\phi_g(f_i))\tsr (\phi_g(f_j^*))$. Since $\phi_g(f_j^*) = f_j^*\circ \phi_{g^{-1}}$ (equivalently, these can be explictly written in terns of the defining representation and its dual), $\phi_g(\Psi(v)) = (\phi_g(\Psi))(\phi_g(v))$ follows.     
\end{proof}

\begin{lemma}The the curvature operator $\mathfrak{R}^S$ from the general Bochner identity (eq~\ref{eq_wietzenbock_identity}) is completely smooth and commutes with the Lie group action of $K$, i.e., it's $K$-covariant.
\end{lemma}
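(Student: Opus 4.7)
The plan is to establish $K$-covariance first and then deduce complete smoothness as an essentially routine consequence, using that $\mathfrak{R}^S$ is a bounded zero-order operator that intertwines the $K$-action.

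The first step is to observe that although $\mathfrak{R}^S(\phi) = \tfrac{1}{2}\sum_{j,k} c(e_j) c(e_k) R^S_{e_j,e_k}(\phi)$ is written in a local orthonormal frame, the sum is in fact frame-independent. Indeed, under an orthogonal change of frame $e'_j = O^k_j e_k$, both the bilinear expression $c(e_j)c(e_k)$ (modulo the antisymmetrization that occurs once one uses $c(u)c(v)+c(v)c(u)=-2g(u,v)$) and the bilinear expression $R^S_{e_j,e_k}$ transform by the same tensor $O\otimes O$, leaving the contraction invariant. Hence $\mathfrak{R}^S$ defines a global smooth section of $\End(S)$ acting on $\MSect(S)$ by pointwise composition.

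For $K$-covariance, let $\phi_g \colon S_x \to S_{gx}$ denote the action of $g\in K$ on $S$. The previous lemma gives $\phi_g\circ c(v) = c(\phi_g v)\circ \phi_g$ for $v\in T^*_xM$. Since the canonical connection is $K$-invariant by definition, its curvature is $K$-equivariant:
\begin{equation*}
\phi_g \circ R^S_{u,v} = R^S_{\phi_g u,\,\phi_g v}\circ \phi_g, \qquad u,v\in T_xM.
\end{equation*}
Choose an orthonormal frame $(e_j)$ at $x$; then $(\phi_g e_j)$ is an orthonormal frame at $gx$, and frame-independence lets us evaluate $\mathfrak{R}^S_{gx}$ in either frame. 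Combining the two equivariance properties,
\begin{equation*}
\phi_g \circ \mathfrak{R}^S_x = \tfrac{1}{2}\sum_{j,k} c(\phi_g e_j)\, c(\phi_g e_k)\, R^S_{\phi_g e_j,\,\phi_g e_k}\circ \phi_g = \mathfrak{R}^S_{gx}\circ \phi_g,
\end{equation*}
so that the induced action $\alpha_g$ of $K$ on $\MSect(\End(S))$ commutes with (left multiplication by) $\mathfrak{R}^S$.

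For complete smoothness, note that $\mathfrak{R}^S$ is a smooth section of $\End(S)$ over the compact base $K/H$, hence bounded in operator norm, say by $C = \|\mathfrak{R}^S\|_\infty$. $K$-covariance yields $\partial_i(\mathfrak{R}^S \xi) = \mathfrak{R}^S(\partial_i \xi)$ for each generator $\partial_i$ of the Lie algebra action, and this extends inductively to all iterated derivatives $\partial_{i_1}\cdots\partial_{i_k}$, and trivially to $\mathfrak{R}^S\otimes 1_{\mat_N}$. Therefore
\begin{equation*}
\|\mathfrak{R}^S \otimes 1_{\mat_N}(\xi)\|_n = \sum_{k\leq n,\, i_1,\dots,i_k} \|\mathfrak{R}^S(\partial_{i_1}\cdots\partial_{i_k}\xi)\| \leq C\,\|\xi\|_n,
\end{equation*}
showing $\mathfrak{R}^S$ is $0$-completely smooth.

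The only genuinely delicate point is the frame-independence of the sum in $\mathfrak{R}^S$ combined with the correct statement of $K$-equivariance of $R^S$; once those are in place, complete smoothness is immediate because the operator is bounded and intertwines the Lie algebra action. An alternative route to complete smoothness, which avoids invoking frame-independence explicitly, is to work at the origin $o=eH$ with the frame induced by a basis of $\mathfrak{M}$ and to use Nomizu's formula to write $R^S$ at $o$ in terms of structure constants of $\Lie{K}$; $K$-invariance then propagates the expression globally, again yielding a bounded $K$-equivariant operator of order zero.
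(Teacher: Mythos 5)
Your proof is correct and follows essentially the same route as the paper: $K$-covariance from the $K$-invariance of the canonical connection (equivariance of $R^S$) together with the preceding lemma on Clifford multiplication, then complete smoothness from compactness and boundedness. One genuinely useful refinement you add is making explicit that boundedness alone does not immediately give $p$-complete smoothness in the Sobolev-type norms $\norm{\cdot}_n$ of eq.~\ref{eq_n_norm}; you need the $K$-covariance so that $\partial_{i_1}\cdots\partial_{i_k}$ commutes with $\mathfrak{R}^S$, which yields the bound $\norm{\mathfrak{R}^S\otimes 1_{\mat_N}(\xi)}_n\leq C\norm{\xi}_n$ and hence $0$-complete smoothness. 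The paper collapses this step to ``compact, therefore bounded, therefore completely smooth,'' which is correct but implicitly relies on the covariance you spelled out; your frame-independence remark on the definition of $\mathfrak{R}^S$ is likewise implicit in the paper but worth stating.
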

\begin{proof} As $K$ is compact, $H$ closed, $K/H$ is compact, and therefore, the curvature operator is bounded and completely smooth. If $S$ is the spinor bundle, and the canonical connection is the Levi-Civita connection, i.e. $K/H$ is a symmetric space, then by equation~(\ref{eq_lichnerowicz_identity}), the $\mathfrak{R}^S$ is multiplication by scalar curvature $\kappa$, and its $K$-invariance is clear. More generally, the canonical connection is $K$-invariant, so by definition the curvature transformation $R^S$ commutes with the action of $K$. And as $
\mathfrak{R}^S(\phi):= \tfrac{1}{2}\sum_{j,k\in[n]} e_j\cdot e_k\cdot R^S_{e_ie_j}(\phi)
$, using that the Clifford multiplication commutes with action of $K$, the invariance holds.
\end{proof}
 
\begin{example}(\cite[Ch~3]{friedrich_dirac}) An example where complete smoothness and the covariance are particularly transparent is when $K/H$ is a symmetric space. The Dirac laplacian $\dlaplace_{K/H}$ for the canonical connection is given by $\dlaplace = \Omega_K + \kappa/8$ where $\kappa$ is the scalar curvature and $\Omega_K$ the Casimir operator for $K$. 
\end{example}
 
\begin{theorem} Evans-Hudson dilation exists for the semigroup $(T_t)_{t\geq 0}$ generated by the canonical connection laplacian, $\laplace^{\End{E}}$, on $\End{E}$. \end{theorem}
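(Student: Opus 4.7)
The plan is to verify the hypotheses of Theorem~\ref{existence_of_evans_hudson_dilation} for the semigroup $(T_t)_{t\geq 0}$ with generator $\Lc=\laplace^{\End(E)}$, acting on an appropriate unital \CStar-algebra $\Ac\subset\Bc(L^2(\End(E)))$ carrying the strongly continuous $K$-action induced by the left action of $K$ on $K/H$. The required hypotheses are: (i) $(T_t)$ is conservative and completely Markov, (ii) $(T_t)$ is $K$-covariant, (iii) $\Lc$ is $p$-completely smooth for some $p$, and (iv) $\Lc(\Ac_\infty)\subset\Ac_\infty\subset\Dom(\Lc)$.

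First, I would extend the semigroup from $\Hc(L^2(\End(E)))$ to all of $\Bc(L^2(\End(E)))$, so that the unital setup of Section~\ref{section_evans_hudson} applies. Complete Markovity of $e^{-t\laplace^{\End(E)}}$ on the Hilbert--Schmidt ideal follows from the Dirichlet form results of \S\ref{noncommutative_dirichlet_forms} together with the closedness argument for $\Ec_{\dlaplace}$ (which adapts to $\Ec_\laplace$ using Bochner). Conservativeness is the content of the proposition immediately preceding: since $1=\sum_i\mu_i\tsr\mu^i$ is covariantly constant for the endomorphism connection on each chart, $\laplace^{\End(E)}(1)=0$, so by the corollary to Theorem~\ref{thm_extending_from_hilbert_schmitt} the semigroup extends to a completely Markov, conservative semigroup on $\Bc(L^2(\End(E)))$, and one then restricts to the \CStar-subalgebra $\Ac$ generated by the smooth homogeneous sections and $1$ (which is invariant under $T_t$).

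Next I would check covariance. The canonical connection on $K/H$ is by definition $K$-invariant, and the endomorphism connection $\grad^H\tsr 1+1\tsr\grad^{H^*}$ inherits $K$-invariance because, as shown in the preceding lemma, the $K$-action identifies fibres of $E$ and $E^*$ compatibly. Therefore $\laplace^{\End(E)}$ commutes with the automorphisms $\alpha_g$ induced on sections, and consequently $T_t\circ\alpha_g=\alpha_g\circ T_t$. For complete smoothness, I would invoke the observation at the end of \S\ref{sec_canonical_connection}: the canonical connection laplacian on the homogeneous bundle $E\tsr E^*$ has the closed form
\begin{equation*}
\laplace^{E\tsr E^*} \;=\; -\,C_2(K,\MSect(E\tsr E^*)) + C_2(H,E\tsr E^*),
\end{equation*}
where the first term is a degree-$2$ polynomial in the basis vector fields of $\Lie{K}$ acting on $\Ac_\infty$, and the second is bounded (as $E\tsr E^*$ has finite rank and $K/H$ is compact). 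Bounded operators are trivially completely smooth, and by Proposition~\ref{prop_poly_smooth} the Casimir piece is $2$-completely smooth; the lemma on sums/compositions of completely smooth maps then yields $p$-complete smoothness of $\Lc$ for some $p\geq 2$.

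Finally, I would verify $\Lc(\Ac_\infty)\subset\Ac_\infty\subset\Dom(\Lc)$. Here $\Ac_\infty$ is the common smooth domain for the $K$-action, and since $\laplace^{\End(E)}$ is a polynomial in the generators $\partial_i$ of one-parameter subgroups plus a bounded, smooth multiplication operator (the $C_2(H,\cdot)$ term, which is itself $K$-smooth by $\Ad(H)$-invariance), it maps smooth elements to smooth elements, giving the inclusion. With all four hypotheses in place, Theorem~\ref{existence_of_evans_hudson_dilation} yields the Evans--Hudson dilation. The step I expect to require the most care is the passage from the Hilbert--Schmidt/Dirichlet-form setting to the unital \CStar-setting while preserving conservativeness and covariance simultaneously; the identification of the unit with $\sum_i\mu_i\tsr\mu^i$ together with the covariant-constancy afforded by the canonical connection (a consequence of the torsion and curvature being parallel) is what makes this reduction clean.
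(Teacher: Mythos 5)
Your proposal follows essentially the same route as the paper: invoke Theorem~\ref{existence_of_evans_hudson_dilation} after verifying conservativeness (via $\grad^{\End(E)}(1)=0$ and the corollary to Theorem~\ref{thm_extending_from_hilbert_schmitt}), $K$-covariance (from $K$-invariance of the canonical connection and the homogeneity of $E\tsr E^*$), and complete smoothness (via the Casimir decomposition $\laplace^{E\tsr E^*}=-C_2(K,\MSect(E\tsr E^*))+C_2(H,E\tsr E^*)$ together with Proposition~\ref{prop_poly_smooth}). The paper's proof is a one-line citation to these same prior results; you have simply made the ``as established'' explicit, which is correct and complete.
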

 
\begin{proof} As established $\laplace^{\End{E}}$ is  completely smooth and commutes with action of $K$, and $(T_t)_{t\geq 0}$ is a conservative quantum dynamical semigroup, the conclusion follows from theorem~\ref{existence_of_evans_hudson_dilation}.
\end{proof}

\begin{corollary}
Evans-Hudson dilation exists for the semigroup $(T_t)_{t\geq 0}$ generated by the Dirac laplacian $\dlaplace_{\End{S}}$.
\end{corollary}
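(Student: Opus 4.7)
The plan is to reduce the corollary to the previous theorem by leveraging the Bochner--Weitzenb\"{o}ck identity on the endomorphism bundle. Since $\End(S)$ is a twisted spinor bundle (being a tensor product of a Dirac bundle with its dual), applying the general Bochner identity (eq~\ref{eq_wietzenbock_identity}) gives
\begin{equation*}
\dlaplace_{\End S} = \laplace^{\End S} + \mathfrak{R}^{\End S},
\end{equation*}
so it suffices to show that each of the hypotheses of theorem~\ref{existence_of_evans_hudson_dilation}---conservativity, complete positivity, $K$-covariance, complete smoothness, and the domain condition $\dlaplace_{\End S}(\Ac_\infty)\subset \Ac_\infty \subset \Dom(\dlaplace_{\End S})$---holds for the sum and not only for the two summands individually.

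First I would check conservativity and complete Markovity. Conservativity follows from the endomorphism-connection observation: $\laplace^{\End S}(1)=0$ forces $\dlaplace_{\End S}(1) = \mathfrak{R}^{\End S}(1)$, and since $\mathfrak{R}^{\End S}$ is built pointwise from Clifford multiplication and the endomorphism curvature acting on $1 = \sum_i \mu_i \tsr \mu^i$ through a graded commutator structure, it annihilates the identity section. Complete Markovity of the associated $L^2$ semigroup follows from the completely Dirichlet property of $\Ec_{\dlaplace}$ established earlier, and this is then extended to $\Bc(L^2(\End S))$ using theorem~\ref{thm_extending_from_hilbert_schmitt} together with the vanishing $\dlaplace_{\End S}(1)=0$.

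Next I would verify the regularity hypotheses. The canonical connection laplacian $\laplace^{\End S}$ was shown to be $K$-covariant and $2$-completely smooth because it is polynomial (of degree two) in the Lie algebra action, by lemma~\ref{lem_canonical_conn_completely_smooth} and proposition~\ref{prop_poly_smooth}. The curvature piece $\mathfrak{R}^{\End S}$ is a bounded, $K$-covariant, completely smooth operator by the lemma established just above, and bounded operators are trivially completely smooth of order $0$. Since complete smoothness of order $p$ is preserved under taking sums (bumping the order up to the maximum), and the $K$-actions commute with each summand, the sum $\dlaplace_{\End S}$ is $K$-covariant and $p$-completely smooth for some $p\leq 2$. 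The domain condition $\dlaplace_{\End S}(\Ac_\infty)\subset\Ac_\infty\subset \Dom(\dlaplace_{\End S})$ inherits from the corresponding statement for $\laplace^{\End S}$ since $\mathfrak{R}^{\End S}$ is bounded and hence preserves $\Ac_\infty$.

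With conservativity, complete positivity, $K$-covariance, complete smoothness, and the domain inclusion all in place, theorem~\ref{existence_of_evans_hudson_dilation} applies directly and yields the desired Evans--Hudson dilation. The one subtle point I expect to be the main obstacle is the conservativity: unlike the pure connection laplacian, where $\laplace^{\End S}(1)=0$ is essentially by construction of the endomorphism connection, one must argue carefully that the curvature term $\mathfrak{R}^{\End S}$ also vanishes on the identity section---this uses both the graded-commutator form of $\mathfrak{R}$ and the fact that the Clifford action on $\End(S)=\Sc\tsr\Sc^*$ acts as a commutator with the Clifford action on $\Sc$, which kills $1_{\End S}$.
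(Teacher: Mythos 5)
Your plan follows the paper's route: decompose via the Bochner identity $\dlaplace_{\End S} = \laplace^{\End S} + \mathfrak{R}^{\End S}$, invoke the previously established $K$-covariance and complete smoothness of the curvature operator (and of the canonical connection laplacian), and apply theorem~\ref{existence_of_evans_hudson_dilation}. That part matches.

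However, the mechanism you give for conservativity contains a genuine error. You argue that $\mathfrak{R}^{\End S}(1)=0$ because ``the Clifford action on $\End(S)=\Sc\tsr\Sc^*$ acts as a commutator with the Clifford action on $\Sc$, which kills $1_{\End S}$.'' That is not how the Clifford module structure on $\End(S)$ is set up here: $\End(S)$ is a twisted spinor bundle with twisting space $S^*$, and the Clifford action is carried by the $S$ tensor factor only (the action on the twisting space is trivial). In particular $c(\alpha)(\sum_i \mu_i\tsr\mu^i) = \sum_i (c(\alpha)\mu_i)\tsr\mu^i$ is not zero in general, so the Clifford action does not annihilate the identity section. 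Your conclusion $\mathfrak{R}^{\End S}(1)=0$ is nonetheless true, but for a different reason: $\grad^{\End S}(1)=0$ forces the curvature transformation $R^{\End S}_{u,v}(1)=0$ (it is built from iterated covariant derivatives of $1$), and $\mathfrak{R}^{\End S}$ is a sum of Clifford multiples of $R^{\End S}_{e_i,e_j}(1)$, hence vanishes on $1$. Simpler still — and this is what the paper actually does — bypass $\mathfrak{R}^{\End S}$ entirely: $D_{\End S}=c\circ\grad^{\End S}$, so $\grad^{\End S}(1)=0$ gives $D_{\End S}(1)=0$ and therefore $\dlaplace_{\End S}(1)=0$ in one line, with no analysis of the curvature term required. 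The rest of your verification (bounded $\mathfrak{R}^{\End S}$ is $0$-completely smooth, sums of completely smooth operators are completely smooth, $K$-covariance passes to the sum, domain preservation since the curvature term is bounded) is sound and consistent with the paper's one-line invocation of the Bochner identity plus the lemma on $\mathfrak{R}^S$.
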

\begin{proof} Since $\grad^{\End(S)}(1)=0$, $\dlaplace_{\End(S)}(1)=0$. The result for $\dlaplace_{\End{S}}$ is immediate from $K$-covariance and complete smoothness of $\mathfrak{R}^S$ using the general Bochner identity.
\end{proof}

\begin{remark} The only obstruction to pushing the arguments for Kostant's cubic Dirac laplacian is the conservativeness of the heat semigroup. Though, the same identification with the endomorphism connection can again be made, and the ``endomorphism cubic Dirac operator'' can be expanded in terms of Kostant's cubic Dirac laplacian and the connection laplacian for the dual bundle as in section~\ref{sec_twisted_laplace}.
\end{remark}
 
On almost commutative spectral triples, where the Dirac operator is a perturbation of the geometric Dirac laplacian, the endomorphism trick, along with observation~\ref{obv_identification_inside_endomorphism_bundle} finds use again. The following example considers the existence of Evans-Hudson dilation with respect to the perturbed geometric Dirac laplacian on the endomorphism bundle. 
 
\begin{example} Consider the almost commutative spectral triple, $\mathfrak{A}_{K/H}:=(\C^\infty(K/H, A), L^2(K/H, \Hs), D_0)$. $\Hs$ being a bundle of Clifford modules is a twisted spinor bundle $\Hs:=\Wc\tsr \Sc$. Let $\diracop$ be the geometric Dirac operator on $\Wc\tsr \Sc$ for the canonical connection on $K/H$, with $D_0=\diracop+B$ for some endomorphism $B$, where the Dirac operators are acting distributionally on $L^2(K/H, \Hs)$.
If the perturbation $B$ is self-adjoint and bounded, $D_0^2=(\diracop+B)^2$ is easily seen to be completely Dirichlet: it's just the Dirac laplacian for a connection with with a different potential on the twisting space. Completely smoothness is also immediately clear by proposition~\ref{prop_poly_smooth}. To handle conservativeness, we pass to the endomorphism bundle $\End(\Hs) = \End(\Wc)\tsr \End(\Sc)$ (where topological closure is implicit). 
%
Since $D_{0,\End(\Wc)}$ acts locally by commutator associated to potential $\Omega_B$ for the endomorphism $B$, the semigroup is conservative, and as before, the existence of Evans-Hudson dilation follows if the potential is covariant with respect to the action of $H$. An ensemble of Dirac operators, as for fuzzy spectral triples\cite{hessam2022noncommutative}, can be realized by randomizing the connection potential, but now it also has a geometric interpretation through the Bochner identity.

\end{example}

\printbibliography

\end{document}